\def\eqref#1{equation~\ref{#1}}
\def\1{\bm{1}}
\DeclareMathAlphabet{\mathsfit}{\encodingdefault}{\sfdefault}{m}{sl}
\SetMathAlphabet{\mathsfit}{bold}{\encodingdefault}{\sfdefault}{bx}{n}
\newcommand{\E}{\mathbb{E}}
\newcommand{\R}{\mathbb{R}}
\definecolor{bgcolor}{rgb}{0.8,1,1}
\definecolor{bgcolor2}{rgb}{0.8,1,0.8}
\definecolor{mydarkgreen}{RGB}{39,130,67}
\definecolor{mydarkred}{RGB}{192,47,25}
\newcommand{\green}{\color{mydarkgreen}}
\newcommand{\red}{\color{mydarkred}}
\newcommand{\cmark}{{\green\ding{51}}}
\newcommand{\xmark}{{\red\ding{55}}}
\newcommand{\dd}{\mathrm{d}}
\newcommand{\N}{\mathcal{N}}
\newcommand{\x}{{x}}
\newcommand{\XX}{{X}}
\newcommand{\f}{{b}}
\newcommand{\WW}{{W}}
\newcommand{\s}{{S}}
\newcommand{\w}{{W}}
\newcommand{\g}{{g}}
\newcommand{\y}{{{x}'}}
\newcommand{\ZZ}{{Z}}
\newcommand{\DD}{{Y}^{\XX}}
\newcommand{\LL}{\mathcal{L}}
\newcommand{\cset}[2]{\big\{#1\big|#2\big\}}
\newcommand{\z}{0}
\newtheorem{statement}{Statement}
\newtheorem{assumption}{Assumption}
\newtheorem{lemma}{Lemma}
\newtheorem{corollary}{Corollary}
\newtheorem{theorem}{Theorem}
\theoremstyle{remark}
\newtheorem{remark}{Remark}
\title{Ito Diffusion Approximation of Universal Ito Chains for Sampling, Optimization and Boosting}
\author{Aleksei Ustimenko\\
  ShareChat\\
  \texttt{aleksei.ustimenko@sharechat.co}
  \And
  Aleksandr Beznosikov\\
  Innopolis University, 
  Skoltech, RANEPA, Yandex\\
  \texttt{anbeznosikov@gmail.com}
}
\begin{document}

\maketitle

\begin{abstract}
\vspace{-0.2cm}
In this work, we consider rather general and broad class of Markov chains, Ito chains, that look like Euler-Maryama discretization of some Stochastic Differential Equation. The chain we study is a unified framework for theoretical analysis. It comes with almost arbitrary isotropic and state-dependent  noise instead of normal and state-independent one as in most related papers. Moreover, in our chain the drift and diffusion coefficient can be inexact in order to cover wide range of applications as Stochastic Gradient Langevin Dynamics, sampling, Stochastic Gradient Descent or Stochastic Gradient Boosting. We prove the bound in $\mathcal{W}_{2}$-distance between the laws of our Ito chain and corresponding differential equation. These results improve or cover most of the known estimates. And for some particular cases, our analysis is the first.
\end{abstract}

\vspace{-0.2cm}
\section{Introduction}
\vspace{-0.2cm}

The connection between diffusion processes and homogeneous Markov chains has been investigated for a long time \citep{skorokhod1963limit}. If we need to approximate the given diffusion by some homogeneous Markov chain, it is easy to realize because we are free to construct the chain nicely, meaning that we can choose terms and properties of MC, e.g., as it was shown in \citep{raginsky2017non}.
However, often the inverse problem arises, namely, we have the a priori given chain, and the goal is to study it via the corresponding diffusion approximation. This task is an increasingly popular and hot research topic. Indeed, it is used to investigate different sampling techniques \citep{continious-time-models}, to describe the behavior of optimization methods \citep{raginsky2017non} and to understand the convergence of boosting algorithms \citep{pmlr-v139-ustimenko21a}. 
% Meanwhile, new areas of applicability are now emerging, such as the creation of generative models \myred{[cite]}. 
From practical experience, the given Markov chain may not have good properties that are easy to analyze in theory. Thus, the aim of our work is to study when diffusion approximation holds for as broad as the possible class of homogeneous Markov chains, i.e., we want to consider the maximally general chain and place the broadest possible assumptions on it whilst obtaining diffusion approximation guarantee. 

The key and most popular MC is Langevin-based \citep{raginsky2017non, dalalyan2017theoretical, cheng2018sharp, erdogdu2018global, durmus2019high, continious-time-models, cheng2020stochastic} (which corresponds to Langevin diffusion). Such a chain is found in most existing works. In this paper, we propose a more general Ito chain: 
\begin{equation}
    \label{eq:main_chain}
    \XX_{k+1} = \XX_k + \eta \big( \f\big(\XX_k\big) + \delta_{k}\big)+ \sqrt{\eta^{1+\gamma}} \big(\sigma\big(\XX_k\big) + \Delta_{k}\big)\epsilon_{k}\big(\XX_k\big),
\end{equation}
where $\XX_k \in \R^d$ is the state of the chain at the moment $k \geq 0$, $\eta \in \R$ is the stepsize, $b : \R^d \to \R^d$ is the main part in the drift of the chain (e.g., for Langevin MC, $b = -\nabla f$, where $f$ is some potential), $\delta \in \R^d$ is the deterministic bias of the drift \citep{DALALYAN20195278} (e.g., such bias occurs if we use smoothing techniques \citep{chatterji2020langevin} for Langevin dynamics, or a gradient-free method instead of a gradient based in optimization \citep{duchi2015optimal}, or a special modification of boosting \citep{pmlr-v139-ustimenko21a}),  parameter $\gamma$ takes values between 0 and 1 (e.g., for SGD case $\gamma = 1$, for sampling case $\gamma = 0$), the whole expression $\sqrt{\eta^{1+\gamma}} \big(\sigma + \Delta\big)\epsilon$ is responsible for generally non-Gaussian noise, depending on the current state of our chain. In this case, $\sigma: \R^d \to \R^{d \times d}$ is called the covariance coefficient, $\Delta \in \R^{d \times d}$ -- the covariance shift and $\epsilon: \R^d \to \R^{d}$ -- the noise function.

Next, let us consider the following Ito diffusion:
\begin{equation}
    \label{eq:main_diff}
\mathrm{d} \ZZ_{{t}} = \f(\ZZ_{{t}})\mathrm{d}t + \sqrt{\eta^{\gamma} }\sigma\big(\ZZ_{{t}}\big)\mathrm{d}W_{t}.
\end{equation}
with Brownian motion $W_{t}$. This is the diffusion for which we obtain bounds on the discretization error with \eqref{eq:main_chain}.

\vspace{-0.2cm}
\subsection{Our Contribution and Related Works} \label{sec:contr_rw}
\vspace{-0.2cm}

Explanation of our contribution can be divided into three parts: 1) the universality of the chain \eqref{eq:main_chain}, 2) rather weak and broad assumptions on the chain's terms, 3) guarantees on the discretization error between \eqref{eq:main_chain} and \eqref{eq:main_diff} in the $\mathcal{W}_{2}$-distance.

$\bullet$ \textbf{Unified framework.}  The Ito chain \eqref{eq:main_chain} incorporates a variety of practical approaches and techniques -- see Table \ref{tab:comparison2}. In particular, \eqref{eq:main_chain} can be used to describe:

\textit{Dynamics.} Primarily, chain \eqref{eq:main_chain}  is suitable for analyzing Langevin Dynamics, which have a wide range of applications. Here we can note the classical results in sampling \citep{ma2019sampling, chatterji2020langevin, dalalyan2017theoretical, durmus2019analysis, durmus2019high}, continuous optimization \citep{10.1145/167293.167407}, as well as modern and hot techniques in generative models \citep{gidel2018variational}.

\textit{SGD and beyond.} The use of \eqref{eq:main_chain} is also a rather popular way to study the behavior of stochastic optimization methods. In particular, one can highlight papers on SGD \citep{ 10.1214/aoms/1177729586, ankirchner:hal-03262396, continious-time-models, sgd-diffusion} analysis via diffusions, as well as works \citep{bhardwaj2019adaptively, kim2020stochastic} about diffusions for popular and famous modifications of the original SGD: SGD with momentum \citep{10.1214/aoms/1177729586}, RMSProp \citep{tieleman2012lecture} and Adam \citep{kingma2014adam}. Moreover, \eqref{eq:main_chain} can be used to describe stochastic methods not only for minimization but for saddle point problems \citep{VIbook2003, ben2009robust, juditsky2011solving, gidel2018variational}: $\min_X \max_Y f(X,Y)$, and fixed point problems \citep{bailion1978asymptotic}: $F(X^*) = X^*$.

\textit{Gradient Boosting.} Moreover, Gradient Boosting algorithms, in particular, the original one from \citep{10.1214/aos/1013203451}, and the Langevin-based boosting proposed in \citep{pmlr-v139-ustimenko21a}, can be written in the form \eqref{eq:main_chain}.

\vspace{-0.2cm}
\renewcommand{\arraystretch}{2}
\begin{table}[h]
    \centering
 %   \small
	\caption{Matching different methods and frameworks with the parameters of the Ito chain \eqref{eq:main_chain} and Assumption \ref{as:key}.}
 \vspace{-0.2cm}
    \label{tab:comparison2}   
    \tiny
    \resizebox{\linewidth}{!}{
  \begin{threeparttable}
    \begin{tabular}{|c|c|c|c|c|c|c|c|c|}
    \cline{2-9}
    \multicolumn{1}{c|}{}
     & \textbf{Case} & $\gamma$ & $\alpha$ & $\beta$ & $b(X_k)$ & $\delta_k$ & $\sigma(X_k)$ & $\Delta_k$ \\
    \hline
    \multirow{3}{*}{\rotatebox[origin=c]{90}{\textbf{Dynamics}}} 
    & GLD & 0 & $\infty$ \tnote{{\color{blue}(1)}}  & $\infty$ & $-\nabla_{\x} f(X_{k})$ & $0$ & $\sqrt{\frac{2}{\tau}}I_{d}$ \tnote{{\color{blue}(2)}} & $0$
    \\ \cline{2-9}
    & SGLD \citep{10.1145/167293.167407} & $0$ & $\frac{1}{2}$ & $1$ & $-\nabla_{\x} f(X_{k})$ & $0$ & $\sqrt{\frac{2}{\tau}}I_{d}$ & $\sqrt{\frac{2}{\tau} I_d + \eta\mathrm{Cov}(\widehat{\nabla})} - \sqrt{\frac{2}{\tau}} I_d$
    \\ \cline{2-9}
    & SGLD with smoothing \citep{chatterji2020langevin} & $0$ & $\frac{1}{2}$ & $1$ & $-\nabla_{\x} f(X_{k})$ & $\nabla_{\x}\big(f(X_{k}) - \mathbb{E}_{\varepsilon}f(X_{k}+\eta^{\frac{1}{2}}\varepsilon)\big)$  & $\sqrt{\frac{2}{\tau}}I_{d}$ & $\sqrt{\frac{2}{\tau} I_d + \eta\mathrm{Cov}(\widehat{\nabla})} - \sqrt{\frac{2}{\tau}} I_d$
    \\\hline\hline
    \multirow{4}{*}{\rotatebox[origin=c]{90}{\textbf{Optimization}}} 
    % & GD & & & & & & &
    % \\ \cline{2-9}
    & SGD \citep{10.1214/aoms/1177729586} & $1$ & $\infty$ & $0$ & $-\nabla_{\x} f(X_{k})$ & $0$ & $\sqrt{\mathrm{Cov}(\nabla)}$ & $0$
    \\ \cline{2-9}
    % & SGD with momentum \citep{10.1214/aoms/1177729586} & $1$  & $\frac{1}{2}$ & $0$ & $(-\nabla_{\x} f(Y_{k}), X_{k})$ & $0$ &  &
    % \\ \cline{2-9}
    % & RMSProp SGLD \citep{tieleman2012lecture} & $0$ & $\frac{1}{2}$  & $1$ & $(-Y_{k}^{-\frac{1}{2}}\nabla_{\x} f(X_{k}), -Y_{k} + \|\nabla_{\x} f(X_{k})\|^2)$ & $(\bm{0}, \mathrm{Cov}(\nabla))$ & $I_{d}\otimes I_{d}$ & $(Y_{k}^{-\frac{1}{2}}\sqrt{\mathrm{Cov}(\nabla)}, \sqrt{\mathrm{Cov}(\|\nabla\|^2)})$
    % \\ \cline{2-9}
    % & Adam SGLD \citep{kingma2014adam} & $0$ & $\frac{1}{2}$ & $0$ & $(-X_{k}+\nabla)$ &  &  & 
    % \\ \cline{2-9}
    & SGDA \citep{dem1972numerical} & $1$ & $\infty$ & $0$ & $\big(-\nabla_{\x} f(X_{k}, Y_{k}), \nabla_{\mathbf{y}} f(X_{k}, Y_{k})\big) $ & $0$ & $\sqrt{\mathrm{Cov}(\nabla)}$ & $0$
    \\ \cline{2-9}
    % & SGD with momentum & & & & & & &
    % \\ \cline{2-9}
    & SA-FP \citep{bailion1978asymptotic} & $1$ & $\infty$ & $0$ & $F(X_{k}) - X_{k}$ & $0$ & $\sqrt{\mathrm{Cov}(\widehat{F})}$ & $0$
    \\ \cline{2-9}
    & SA \citep{bailion1978asymptotic} & $1$ & $\infty$ & $0$ & $H(X_{k}) - a$ \tnote{{\color{blue}(3)}} & $0$ & $\sqrt{\mathrm{Cov}(\widehat{H})}$ & $0$
    \\\hline\hline 
    \multirow{3}{*}{\rotatebox[origin=c]{90}{\textbf{Boosting}}} 
    % & GB & & & & & & &
    % \\ \clinex{2-9}
    & SGB \citep{10.1214/aos/1013203451} &  $1$ & $\infty$ & $0$ & $-P(X_{k}) \nabla_{\x} f(X_{k})$ & $0$ & $\sqrt{\mathrm{Cov}(\widehat{\nabla})}$ & $0$
    \\ \cline{2-9}
    & SGLB \citep{pmlr-v139-ustimenko21a} \tnote{{\color{blue}(4)}}  & $0$ & $\frac{1}{2}$ & $0$ &  $-P(X_{k}) \nabla_{\x} f(X_{k})$ & $0$ & $\sqrt{\frac{2}{\tau}}I_{d}$ & $\sqrt{\eta\mathrm{Cov}(\widehat{\nabla})}$
    \\ \cline{2-9}
    & SGLB-O \citep{pmlr-v139-ustimenko21a} \tnote{{\color{blue}(5)}} & $0$ & $\frac{1}{4}$ & $0$ & $-P_{\infty} \nabla_{\x} f(X_{k})$ & $ \big(P_{\infty}-P(X_{k})\big) \nabla_{\x} f(X_{k})$ & $\sqrt{\frac{2}{\tau}}I_{d}$ & $\sqrt{\eta\mathrm{Cov}(\widehat{\nabla})}$
    \\\hline
    \end{tabular}
 \begin{tablenotes}
    {\scriptsize   
    \item [\tnote{{\color{blue}(1)}}]   $\eta^{\infty}$ means that the terms multiplied by it vanish, i.e., we can take $\alpha$ as large as we desire when calculating overall approximation error. 
    \item [\tnote{{\color{blue}(2)}}] $\tau$ refers to inverse diffusion temperature.
    \item [\tnote{{\color{blue}(3)}}] $a$ is any constant. Stochastic Approximation tries to solve $H(x) = a$.
    \item [\tnote{{\color{blue}(4)}}] SGLB here is defined as in the original paper, but here we ignore smoothing applied to the trees selection algorithm.
    \item [\tnote{{\color{blue}(5)}}] "O" stands for "original", i.e., as presented in the original paper. In that case, such coefficients appear if we take the distribution of trees as in the paper.
    }
\end{tablenotes}  
    \end{threeparttable}
    }
\end{table}
\vspace{-0.2cm}
% \todo{add \citep{Alfonsi2014OptimalTB} to Table 1}
% \todo{(for me) add \citep{Alfonsi2014OptimalTB} to the text}

$\bullet$ \textbf{Wide assumptions and results.} We consider the most general and practical setting for \eqref{eq:main_chain} - see Table \ref{tab:comparison0}. Next, we give more details on each of the columns of Table \ref{tab:comparison0} (comparison criteria):

\textit{Non-normality of noise.} The central and widely used assumption about noise in analyses of MC satisfying \eqref{eq:main_chain} (e.g., Langevin-based) is that it has a normal distribution \citep{raginsky2017non, dalalyan2017theoretical, cheng2018sharp, durmus2019high, ma2019sampling, uniform-in-time, continious-time-models, chatterji2020langevin, xie2021a}. However, practice suggests otherwise. For example, stochastic gradient noise in the training of neural networks is not Gaussian for classical and small models \citep{simsekli2019tail}, as well as for modern and large transformers \citep{zhang2020adaptive}. 
% Moreover, non-Gaussian noise is also observed in minimax problems of GANs training \citep{goodfellow2014generative}, as well as in Gradient Boosting -- see Figure \myred{[we need figure]}.
Therefore, as in some papers on the SDE approximations for SGD \citep{SME, li2019stochastic, sgd-diffusion,
cheng2020stochastic}, we assume that the noise in our Ito chain \eqref{eq:main_chain} is non-Gaussian -- see Assumption \ref{as:key}.

\textit{Dependence of the noise on the current state.} Most of the papers on Langevin MCMC assume that the noise is independent of the current state \citep{raginsky2017non,erdogdu2018global, dalalyan2017theoretical, cheng2018sharp, durmus2019high, ma2019sampling, chatterji2020langevin, uniform-in-time, continious-time-models,
xie2021a}. However, let's talk primarily about SGD analysis. This assumption is often unmet because the noise in a stochastic gradient can depend strongly on the current weights of the model, e.g., how close we are to the optimal weights. Therefore, we consider the state-dependent noise in our chain \eqref{eq:main_chain}. However, in our chain, we require that the diffusion coefficient is strictly non-singular and its minimal eigenvalue is lower bounded uniformly from zero (uniformly elliptic, see \citep{baldi2017stochastic}, p. 308), which is a limitation of our work compared to the analysis done in \citep{SME}.

\vspace{-0.2cm}
\begin{table*}[h]
    \centering
    \small
%    \scriptsize
	\caption{Comparison of the theoretical setups and results on Markov chains and diffusions analysis.}
 \vspace{-0.2cm}
    \label{tab:comparison0}   
    \scriptsize 
\resizebox{\linewidth}{!}{
  \begin{threeparttable}
    \begin{tabular}{|c|c|c|c|c|c|c|}
    \cline{2-5}
    \multicolumn{1}{c|}{} & \multicolumn{2}{|c|}{\textbf{Noise}} & \multicolumn{2}{|c|}{\textbf{Generator, i.e.} $b(\cdot)$}  & \multicolumn{1}{c}{} & \multicolumn{1}{c}{}\\
    \cline{1-7}
     \textbf{Reference} & Distribution & \textbf{Dependence}  &  \textbf{Non-convex}  & \textbf{Non-dissipative} & \textbf{Non-uniformly elliptic} & $\mathcal{W}_{2}$\\
    \hline
    \citep{raginsky2017non} & $\mathcal{N}$+SG & \cmark   & \cmark & \xmark & \cmark & \cmark
    \\     \cline{1-7}
    \citep{dalalyan2017theoretical} & $\mathcal{N}$  & \xmark   & \xmark & \xmark & \xmark & \cmark
    \\     \cline{1-7}
    \citep{cheng2018sharp} & $\mathcal{N}$  & \xmark   & \cmark & \xmark & \xmark & \xmark 
    \\     \cline{1-7}
    \citep{erdogdu2018global} & $\mathcal{N}$+SG  & \cmark  & \cmark & \xmark & \xmark & \xmark
    \\     \cline{1-7}
    \citep{durmus2019high} & $\mathcal{N}$  & \xmark   & \xmark & \xmark &\xmark & \cmark
    \\     \cline{1-7}
    \citep{ma2019sampling} & $\mathcal{N}$  & \xmark   & \cmark & \xmark & \xmark & \xmark
    \\     \cline{1-7}
    \citep{li2019stochastic} & $\mathcal{N}$  & \cmark   & \xmark & \xmark & \cmark &\cmark
    \\     \cline{1-7}
    \citep{chatterji2020langevin} & $\mathcal{N}$ & \xmark  &  \xmark & \xmark &\xmark & \cmark
    \\    \cline{1-7}
    \citep{uniform-in-time} & $\|\epsilon\|\le \text{const}$ a.s.  & \xmark  & \cmark & \xmark & \xmark & \xmark
    \\     \cline{1-7}    
    \citep{continious-time-models} & $\mathcal{N}$ & \xmark  & \cmark & \xmark & \xmark &\xmark
    \\     \cline{1-7}   
    \citep{ankirchner:hal-03262396} & $\mathcal{N}$ & \xmark  & \cmark & \xmark & \cmark & \xmark
    \\     \cline{1-7}
    \citep{sgd-diffusion} & $\mathcal{N}$ & \xmark   & \cmark & \xmark & \xmark & \xmark
    \\     \cline{1-7}
    \citep{xie2021a} & $\mathcal{N}$ & \xmark   & \xmark & \xmark & \xmark & \xmark
    \\    \cline{1-7}
    \citep{pmlr-v139-ustimenko21a} & Mixture $\mathcal{N}$ & \cmark & \cmark & \xmark & \cmark & \xmark
     \\   \cline{1-7}
    \citep{cheng2020stochastic} & $\mathcal{N}$+SG & \cmark  & \cmark  & \xmark & \xmark & \xmark
    \\     \cline{1-7}
    \citep{SME} & $\mathbb{E}\|\epsilon\|^4 \le \mathrm{const}$ & \cmark & \cmark & \xmark & \cmark & \xmark 
    \\ \cline{1-7}
    \cellcolor{bgcolor2}{Ours} & \cellcolor{bgcolor2}{$\mathbb{E}\|\epsilon\|^4 \le \mathrm{const}$} & \cellcolor{bgcolor2}{\cmark} & \cellcolor{bgcolor2}{\cmark} & \cellcolor{bgcolor2}{\cmark} & \cellcolor{bgcolor2}{\xmark} & \cellcolor{bgcolor2}{\cmark}
    \\\hline
    \end{tabular}
%     \begin{tablenotes}
%     {\scriptsize   
%     \item [] \tnote{{\color{blue}(1)}} bounded variance \tnote{{\color{blue}(2)}} uniformly bounded;
%     }
% \end{tablenotes}  
    \end{threeparttable}
}
\end{table*}
\vspace{-0.2cm}

\textit{Without convexity and dissipativity assumptions.} Note also that often, when dealing with Langevin MC, the authors consider the convex/monotone setup \citep{dalalyan2017theoretical,erdogdu2018global, durmus2019high, li2019stochastic, chatterji2020langevin, xie2021a}, which is possible and relevant, but at the same time restricted. This is primarily because a large number of practical problems (including ML problems) are non-convex: neural networks \citep{goodfellow2016deep}, adversarial training \citep{goodfellow2014generative}, games \citep{hazan2017efficient}, problems with specific losses \citep{nguyen2013algorithms} and many others examples. However, even those works \citep{raginsky2017non, cheng2018sharp, ma2019sampling, uniform-in-time, continious-time-models, ankirchner:hal-03262396, sgd-diffusion, pmlr-v139-ustimenko21a,
cheng2020stochastic} which consider the non-convex case make it under the dissipativity assumption (see for example, A.3 from \citep{cheng2020stochastic}). This assumption means non-convexity inside some ball and strong convexity outside the ball. However, it is not always fulfilled for practical problems and is primarily needed to simplify the analysis.

Here, in addition to examples of non-convex ML problems, we can mention already classic examples of the use of stochastic processes in applied problems \citep{oksendal2003stochastic}. Even the simplest processes of these are not dissipative. For example, one of the best-known stochastic processes, the Ornstein–Uhlenbeck process, can be introduced using the diffusion \eqref{eq:main_diff}:
$
dZ_t = \alpha Z_t dt + \sigma dW_t,
$
wit $\alpha, \sigma > 0$. This process has been known for almost a century \citep{PhysRev.36.823}, and has both classical applications and completely new ones, e.g., in machine learning \citep{lillicrap2015continuous, pmlr-v125-blanc20a}. 
In our analysis, we do not need assumptions about either convexity or dissipativity (formally, for our chain \eqref{eq:main_chain} we should talk not about non-convexity/non-dissipativity, but about non-monotonicity of $(-b)$ because, unlike the Langevin MC, we do not assume that $b = -\nabla f$) -- see Assumption \ref{as:key}.  

$\mathcal{W}_{2}$ \textit{convergence.} In our work, we give a bound on the discretization error in the $\mathcal{W}_{2}$-distance. This is a common criterion in many works \citep{raginsky2017non, dalalyan2017theoretical, erdogdu2018global, li2019stochastic, ma2019sampling}. However, in the meantime, the main competitive paper \citep{cheng2020stochastic} gives weaker guarantees, namely, the distance is measured in $\mathcal{W}_{1}$. First of all, note that from the $\mathcal{W}_{2}$ bound follows the $\mathcal{W}_{1}$. The converse is not true. Thus $\mathcal{W}_{2}$-distance can be used when $b$ in \eqref{eq:main_chain} is a gradient of some potential/loss function $f$, and this potential is quadratic growth (e.g., MSE) -- see Lemma 6 from \citep{raginsky2017non}. In turn, $\mathcal{W}_{1}$ cannot be used for such potentials. \citep{EDWARDS2011387}

$\bullet$ \textbf{Best and new rates.}
We provide explicit bound between the laws of $\XX_k$ and $\ZZ_{k\eta}$ in $\mathcal{W}_{2}$-distance for almost arbitrary noise:
\begin{equation*}
    \mathcal{W}_{2}\big(\mathcal{L}(\XX_{k}),\LL(\ZZ_{{k\eta}})) = \mathcal{O} \Big(\eta^{\theta} + \eta^{\frac{\theta}{2}+\frac{\gamma}{4} }\Big), ~~ \text{where} ~~
    \theta = \min \left\{\alpha ; \frac{(\gamma+1)(1+\chi_{0})+(\gamma+\beta)(1-\chi_{0})}{4} \right\} 
\end{equation*}
with $\chi_{0} = \mathbbm{1}\{\text{Gaussian noise}\}$. $\alpha$ and $\beta$ are parameters from Assumption \ref{as:key}, $\gamma$ is from the chain \eqref{eq:main_chain}. Specific values for $\alpha$, $\beta$ and $\gamma$ in different cases can be found in Table \ref{tab:comparison2}. Using this table, it is possible to obtain convergence guarantees for the major cases -- see Table \ref{tab:comparison1}. 

\begin{wraptable}[15]{r}{6.5cm} 
    \centering
    \small
    \vspace{-0.5cm}
%    \scriptsize
	\caption{Comparison of guarantees on discretization error in our work with the literature.}
    \label{tab:comparison1}   
    \scriptsize 
        \vspace{-0.2cm}
  \begin{threeparttable}
    \begin{tabular}{|c|c|c|c|}
    \cline{2-4}
    \multicolumn{1}{c|}{}
    & \textbf{Noise} & \textbf{Reference} & \textbf{Rate}  \\
    % \hline
    % \multirow{2}{*}{\rotatebox[origin=c]{90}{\textbf{GLD}}} &
    % \multirow{1}{*}{--}  & & 
    % \\ \cline{2-4}
    % & \multirow{1}{*}{--} & \cellcolor{bgcolor2}{Ours} & \cellcolor{bgcolor2}{$\mathcal{O}(\eta)$}
    % \\
    % \hline
    \hline 
    \multirow{3}{*}{\rotatebox[origin=c]{90}{\textbf{SGLD}}}
    &
    \multirow{2}{*}{Gaussian} 
    & \citep{muzellec2020dimension} & $\mathcal{O}(\eta^{\frac{1}{4}})$ 
    \\ \cline{3-4}
    & & \cellcolor{bgcolor2}{Ours} & \cellcolor{bgcolor2}{$\mathcal{O}(\eta^{\frac{1}{4}})$} 
    \\ \cline{2-4}
    &
    \multirow{1}{*}{any} 
    & \cellcolor{bgcolor2}{Ours} & \cellcolor{bgcolor2}{$\mathcal{O}(\eta^{\frac{1}{4}})$}
    \\\hline\hline 
    \multirow{4}{*}{\rotatebox[origin=c]{90}{\textbf{SGD}}} 
    &
    \multirow{2}{*}{Gaussian} 
    & \citep{cheng2020stochastic} \tnote{{\color{blue}(1)}} & $\mathcal{O}(\eta^{\frac{1}{8}})$  
    \\ \cline{3-4}
    & & \cellcolor{bgcolor2}{Ours} & \cellcolor{bgcolor2}{$\mathcal{O}(\eta^{\frac{3}{4}})$}
    \\ \cline{2-4}
    &
    \multirow{2}{*}{any} 
    & \citep{cheng2020stochastic} \tnote{{\color{blue}(1)}} & $\mathcal{O}(\eta^{\frac{1}{8}})$ 
    \\ \cline{3-4}
    & & \cellcolor{bgcolor2}{Ours} & \cellcolor{bgcolor2}{$\mathcal{O}(\eta^{\frac{1}{2}})$}
    \\\hline\hline 
    \multirow{1}{*}{\rotatebox[origin=c]{90}{\textbf{SGB}}} &
    \multirow{1}{*}{any} 
    & \cellcolor{bgcolor2}{Ours} & \cellcolor{bgcolor2}{$\mathcal{O}(\eta^{\frac{1}{2}})$}
    \\\hline\hline 
    \multirow{1}{*}{\rotatebox[origin=c]{90}{\textbf{\tiny{SGLB}}}} &
    \multirow{1}{*}{any} 
    & \cellcolor{bgcolor2}{Ours} & \cellcolor{bgcolor2}{$\mathcal{O}(\eta^{\frac{1}{4}})$}
    \\\hline
    \end{tabular}
    \begin{tablenotes}
    {\scriptsize   
    \item [] \tnote{{\color{blue}(1)}} $\mathcal{W}_{1}$ - distance
    }
\end{tablenotes}  
    \end{threeparttable}
\end{wraptable}

For SGLD, our bounds are the best in the existing literature. In particular, they coincide with the results for the general Euler-Maryama discretization with normal noise -- a subset of chains from the family \eqref{eq:main_chain}. For non-Gaussian noise, our results are first in the literature.

SGD with non-Gaussian noise was considered in \citep{cheng2020stochastic}, the authors give guarantees in  $\mathcal{W}_{1}$ - distance, we use $\mathcal{W}_{2}$. Our guarantee in $\mathcal{W}_{1}$-distances is $\mathcal{O}(\eta^{\frac{1}{4}})$, which is better than $\mathcal{O}(\eta^{\frac{1}{8}})$ in \citep{cheng2020stochastic}.

In the case of SGD and SGLB \citep{pmlr-v139-ustimenko21a}, our work is the first to get estimates on discretization error in both cases of noise.

\vspace{-0.2cm}
\section{Problem Setup and Assumptions} \label{sec:psa}
\vspace{-0.2cm}

We consider the chain \eqref{eq:main_chain} and the diffusion \eqref{eq:main_diff} with $X_0 = Z_0 = x_0 \in \R^d$.
Our ultimate goal is to produce an upper bound on 
$\mathcal{W}_{2}\big(\mathcal{L}(\XX_{k}), \mathcal{L}(\ZZ_{k\eta})\big)$ for all $k \in \mathbb{N}$, where $\mathcal{L}$ here and after denotes the distribution of a random vector.

We assume that learning rate $\eta \in (0, 1]$ 
and $\gamma \in \{0,1\}$ (see Table \ref{tab:comparison2} for details). In the literature (especially on optimization), it is common to assume that the learning rate $\eta$ depends on the properties of $\nabla f$ (in our case $b$), e.g., typically that $\eta \sim L^{-1}$, where $L$ is a Lipschitz constant of $\nabla f$. Meanwhile, we can always renormalize the original problem to guarantee $\eta \in (0;1]$. Next, let us list the assumptions concerning the terms of the chain \eqref{eq:main_chain}.

\begin{assumption} \label{as:key} 
There exists some constants $M_{0} \ge 0$, $M_{1} \ge 0$, $M_{\epsilon} \ge 0$,  $\sigma_{0} \ge 0$, $\sigma_{1} \ge 0$, $\alpha > 0, \beta \in [0, 1]$ (see also Table \ref{tab:comparison2} for special cases) such that

$\bullet$ the drift $b$ is $M_0$-Lipschitz, i.e. for $x, x'\in \mathbb{R}^{d}$
$$
{\big{\|}}\f\big(\x\big) - \f\big(\y\big){\big{\|}} \le M_{0} {\big{\|}}\x-\y{\big{\|}}.
$$
For particular cases of optimization problems: minimization, and saddle point problems, this assumption means that the corresponding target functions have Lipschitz gradients, i.e., are smooth. Hence, we do not need other assumptions, such as the monotonicity of $b$ or, in particular, the convexity of $b = -\nabla f$.

$\bullet$ the biased drift $\delta$ is bounded, i.e. for all $k \geq 0$ 
$$ 
{\big{\|}}\delta_{k}{\big{\|}}^2 \le M_{1}^2 \eta^{2\alpha}  \big(1+ {\big{\|}}\XX_{k}{\big{\|}}^{2}\big).
$$

$\bullet$ the noise function $\epsilon$ is unbiased, has unit covariance, and "the central limit theorem" holds for it, i.e., for all $x \in \R^d$, $k \geq 0$ and $S \geq 1$
$$
\E_{\epsilon}\epsilon_{k}\big(\x\big) = \z_d, \quad  \mathbb{E}_{\epsilon}\epsilon_{k}\epsilon^{\mathrm{T}}_{k}\big(\x\big) = I_d,  \quad \mathcal{W}_{2}^{2}\Big(\N\big(\z_d, SI_{d}\big),\LL\Big(\sum_{k=0}^{S-1} \epsilon_{k}\big(\x\big)\Big)\Big) \le M_{\epsilon}^{2} \eta^{\beta}.
$$
In the general case, noise $e$ can be arbitrary within the assumption of "the central limit theorem," which holds in particular for all noises that have uniformly bounded the fourth moment $\mathbb{E}\|\epsilon_k(x)\|^4$, see \citep{bonis2020stein}. We also define $\chi_{0} = 1$ if the noise $\epsilon_k$ is Gaussian, otherwise $\chi_{0} = 0$, note that in the first case $M_{\epsilon} = 0$. 

$\bullet$ the covariance coefficient $\sigma$ is symmetric, positive definite, uniformly elliptic, bounded and $M_0$-Lipschitz, i.e. for all $x, x'\in \mathbb{R}^{d}$
$$
\sigma_{1}I_{d}\ge \sigma\big(\x\big) = \sigma^{\mathrm{T}}\big(\x\big) \ge {\sigma_{0}} I_d, \quad \mathbb{E}_{\epsilon} \Big\|\sigma\big(\x\big)\epsilon_{k}\big(\x\big) - \sigma\big(\y\big)\epsilon_{k}\big(\y\big)\Big\|^2 \le M_{0}^2 {\big{\|}}\x-\y{\big{\|}}^2.
$$
$\bullet$ the covariance shift $\Delta$ is symmetric and bounded, i.e. for all $k \geq 0$
$$ 
\Delta_k = \Delta_k^{\mathrm{T}}, \quad \eta^{\gamma}\mathrm{Tr\,}\big(\Delta_{k}^2\big) \le M_{1}^2 \eta^{2\alpha}  \big(1+ {\big{\|}}\XX_{k}{\big{\|}}^{2}\big).
$$
\end{assumption}
Let us also introduce the following notation for convenience: $M^2 = 2\max \big\{M_{0}^2 ,\big\|b(0_{d})\big\|^2\big\} + 2 \max\big\{M_{0}^2, d \sigma_{1}^2\big\} + 3\eta^{2 \alpha} M_{1}^2$. One can note that such $M > 0$ gives
$$
{\big{\|}}\f\big(\XX_{k}\big){\big{\|}}^2 +  \mathrm{Tr\,}\Big(\sigma \big(\XX_{k}\big)\sigma^{\mathrm{T}}\big(\XX_{k}\big)\Big) + {\big{\|}}\delta_{k}{\big{\|}}^2+\eta^{\gamma}\mathrm{Tr\,}\big(\Delta_{k}^2\big) \le M^2 \Big(1+{\big{\|}}\XX_{k}{\big{\|}}^2\Big).
$$
Moreover, we define the bound on the initial vector $\x_{0}$: $R^2_{0} = \max\{1; \big\|\x_{0}\big\|^2\}$. Meanwhile, we do not assume the existence of some $X^*$ s.t. $X_k \to X^*$ because, in the general non-dissipative case, the chain \eqref{eq:main_chain} can diverge, i.e., $\|X_k\| \to \infty$. 

\vspace{-0.2cm}
\section{Main Results}
\vspace{-0.2cm}
\subsection{Chain Approximation by Window Coupling} \label{sec:ca_wc}
\vspace{-0.2cm}

The first thing to look into is the non-Gaussian noise. In this section, we construct a new auxiliary chain with Gaussian noise that approximates $S$-subsampled initial dynamic \eqref{eq:main_chain}. The idea behind this approach is that we can use the central limit theorem for the noise function $\epsilon$ (Assumption \ref{as:key}), and note that by collecting a reasonably large $S$-batch, we can approximate our non-Gaussian noise by Gaussian. 

We define $\epsilon^{S}_{k}(\x) =  \sum_{i=0}^{S-1}\epsilon_{Sk+i}(\x)$. Since for $\epsilon$
"the central limit theorem" holds (Assumption \ref{as:key}), using this new "batched" $\epsilon$, we can introduce
$$
\big(\sqrt{S} {\zeta}^{S}_k(\x), \epsilon^{S}_k(x)\big) \sim \Pi_{*}\Big(\N(\z_d, S I_d),\, \LL\big(\epsilon^{S}_k(\x)\big)\Big),
$$ 
where $\Pi_{*}(\cdot,\cdot)$ is an optimal coupling for $\mathcal{W}_{2}$-distance. In fact, $\zeta$ has no closed form, but it is an auxiliary object, and we need only that it exists (by definition of $\mathcal{W}_{2}$) \citep{CAVALLETTI20151367}. The only fact we strongly need follows from definitions of $\zeta_k^S$ and of optimal coupling --  we know that $\zeta^k_S \sim \N(\z_d, I_d)$. We need such $\zeta^k_S$ to introduce the coupled chain $(\DD_{\overline{\eta} k})$, which is closed to the chain $X_{Sk}$. The natural idea to construct is
\begin{equation}
    \label{eq:coupled_v1}
    \DD_{\overline{\eta} (k+1)} = \DD_{\overline{\eta} k} + \overline{\eta}\, {\f}\big(\DD_{\overline{\eta} k}\big) +  \underbrace{\sqrt{\overline{\eta}\, {\eta}^{\gamma}}\,\sigma\big(\DD_{\overline{\eta} k}\big) {\zeta^{S}_k\big(\XX_{S k}\big)}}_{\tiny{\text{coupling via noise}}},
\end{equation}
where we additionally define subsampled learning rate $\overline{\eta} = S \eta$. The essence of this trick is quite simple -- a non-Gaussian noise is subtracted from the original chain \eqref{eq:main_chain} and replaced by a Gaussian CLT approximation. This idea can show itself perfectly in the case of a $\mu$-strongly monotone operator $(-b)$ (which corresponds to strong convexity and strong dissipativity in the case $b = -\nabla f$): $\langle b(X) - b(Y), Y - X\rangle \ge \mu\|X-Y\|^2$. In particular, it is easy to bound $(\DD_{\overline{\eta} k})$ and $(\XX_{Sk})$ with monotonicity:
\begin{eqnarray*}
\mathbb{E}\big\|\DD_{\overline{\eta} (k+1)}-\XX_{S (k+1)}\big\|^2 
&=&
\mathbb{E}\big\|\DD_{\overline{\eta} k} + \overline{\eta} {\f}\big(\DD_{\overline{\eta} k}\big) + \ldots  -\XX_{S k} - \eta \sum_{i=0}^{S-1} b (\XX_{S k + i}) + \ldots\big\|^2
\\
&\approx&
\mathbb{E}\big\|\DD_{\overline{\eta} k} + \overline{\eta} {\f}\big(\DD_{\overline{\eta} k}\big) -\XX_{S k} - \overline{\eta} b (\XX_{S k}) + \ldots\big\|^2
\\
&=&
\mathbb{E}\big\|\DD_{\overline{\eta} k} -\XX_{S k} \big\|^2 + 2 \overline{\eta} \mathbb{E} \langle{\f}\big(\DD_{\overline{\eta} k}\big) - b (\XX_{S k}), \DD_{\overline{\eta} k} - \XX_{S k} \rangle + \ldots
\\
&\leq&
(1 - \mu \overline{\eta} )\mathbb{E}\big\|\DD_{\overline{\eta} k} -\XX_{S k} \big\|^2 + \ldots.
\end{eqnarray*}
If we choose $\overline{\eta} < \mu^{-1}$, then we have a geometric convergence. It allows us to obtain the bound on the difference that does not diverge in time. \emph{But this idea does not work in our setting because nothing like (strong) monotonicity/convexity/dissipativity is required to hold}. Without such a term, those two chains mainly diverge from each other in general, as no structure pulls them closer to each other.
This issue can be alleviated. We need to add one more term. In particular, we inject $ \overline{\eta} L\Big(\XX_{Sk}-\DD_{\overline{{\eta}} k}\Big)$ with some constant $L > 0$ (which we define later) into \eqref{eq:coupled_v1}. 
Thus, instead of \eqref{eq:coupled_v1} we define the coupled chain $\DD_{\overline{{\eta}} k}$ using arbitrary fixed constant $L\ge 0$:
\begin{equation}
    \label{eq:coupled_v2}
    \DD_{\overline{{\eta}} (k+1)} = \DD_{\overline{{\eta}} k} + \overline{{\eta}} {\f}(\DD_{\overline{{\eta}} k}) +  \sqrt{\overline{{\eta}} {\eta}^{\gamma}}\sigma(\DD_{\overline{{\eta}} k}) {{\zeta}^{S}_k\big(\XX_{S k}\big)}\underbrace{- L \overline{{\eta}}(\DD_{\overline{{\eta}} k} - \XX_{Sk})}_{\tiny{\text{Window coupling}}}.
\end{equation}
This additional coupling effectively emulates "monotonicity"/"convexity"/"dissipativity":
\begin{eqnarray}
\label{eq:temp1_wc_conv}
\mathbb{E}\big\|\DD_{\overline{\eta} (k+1)}-\XX_{S (k+1)}\big\|^2 
&=&
\mathbb{E}\big\|\DD_{\overline{\eta} k} + \overline{\eta} L\Big(\XX_{Sk}-\DD_{\overline{{\eta}} k}\Big) + \ldots -\XX_{S k} + \ldots\big\|^2
\notag\\
&=&
\mathbb{E}\big\|\DD_{\overline{\eta} k} -\XX_{S k} \big\|^2 + 2 \overline{\eta} L \mathbb{E} \langle \XX_{S k}- \DD_{\overline{{\eta}} k}, \DD_{\overline{{\eta}} k} - \XX_{S k} \rangle + \ldots
\notag\\
&\leq&
(1 - 2\overline{\eta} L)\mathbb{E}\big\|\DD_{\overline{\eta} k} -\XX_{S k} \big\|^2 + \ldots.
\end{eqnarray}
Physical interpretation of the additional term $L \overline{{\eta}}(\DD_{\overline{{\eta}} k} - \XX_{Sk})$ is straightforward: in order to force $\DD_{\overline{{\eta}} k}$ and $ \XX_{Sk}$ to be close to each other we "forget" a portion of $\DD_{\overline{{\eta}} k}$ and replace it with a portion of $\XX_{S k}$. Normal noise appearing in $\DD_{\overline{{\eta}}k}$ makes the distribution of $\DD_{\overline{{\eta}}k}$ "regular" and $- \overline{{\eta}} L \DD_{\overline{{\eta}} k}$ term acts like $L_{2}$-regularization for $\DD_{\overline{{\eta}}k}$ which additionally imposes regularity on $\DD_{\overline{{\eta}}k}$, which was absent in $\XX$ due to the non-Gaussian noise and absence of the regularization.

On the other hand, the purpose of introducing $\DD_{\overline{{\eta}} k}$ is to create a Gaussian chain that approximates the original non-Gaussian $X_k$. But $L \overline{{\eta}}(\DD_{\overline{{\eta}} k} - \XX_{Sk})$ at first glance destroys the normality of $\DD_{\overline{{\eta}} k}$. We suggest to look at Window coupling $L \overline{{\eta}}(\DD_{\overline{{\eta}} k} - \XX_{Sk})$ as a biased part of drift. Then the noise in the $\DD_{\overline{{\eta}} k}$ remains Gaussian. Nevertheless, this will give us additional problems in the future that will have to be solved.

Even though reasoning \eqref{eq:temp1_wc_conv} gives the basic idea of convergence of $\E\big{\|}\XX_{S k} - \DD_{{\overline{{\eta}} k}}{\big{\|}}^2$, we hide a lot under the dot sign, and these missing terms can have a bad effect. In particular, if we try to bound $\E\big{\|}\XX_{S k} - \DD_{{\overline{{\eta}} k}}{\big{\|}}^2$, then we would immediately obtain divergence since the expected squared norm of $\XX_{k}$ can be unbounded (as we noted in Section \ref{sec:psa}). Such a norm appears multiplicative in bias-related and covariance-related terms (e.g., $\delta$ and $\Delta$ in Assumption \ref{as:key}). This suggests that if we temper the difference $\E\big{\|}\XX_{S k} - \DD_{{\overline{{\eta}} k}}{\big{\|}}^2$ on the bound of the worst-case norm $R^2(t) \ge \max\left\{ 1;\max_{{\eta} k \le t} \mathbb{E}\| X_{k} \|^2 \right\}$, then we effectively cancel out terms with $\mathbb{E}\|\XX_{S k}\|^2$ and obtain "uniform" in $k\overline{{\eta}} \le t$ bound for any fixed horizon $t > 0$.
\begin{lemma} \label{lemma:A2_main} Let Assumption \ref{as:key} holds. If $L \ge 1 + M_{0}+M + 2M_{0}^2 + M^2 + M^2 M_{\epsilon}^2$ and $S\eta \leq 1$, then for any $t > 0$:
$${\Delta}^{S}_{t} = \max_{\overline{{\eta}} k' \le t} \frac{\E\big{\|}\XX_{S k'} - \DD_{{\overline{{\eta}} k'}}{\big{\|}}^2}{R^2(t)} = \mathcal{O}\left( {\eta}^{2\alpha}+\frac{{\eta}^{\gamma+\beta}}{S}(1-\chi_{0}) + \overline{{\eta}}\right),$$
where $R^2(t) \ge \max\left\{ 1;\max_{{\eta} k \le t} \mathbb{E}\| X_{k} \|^2\right\}$.
% and $C_1 = M^2 + dM_{\epsilon}^2\sigma_1^2\eta ^{\gamma+\beta}$.
\end{lemma}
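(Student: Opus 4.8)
The plan is to establish a contractive recursion for $a_k := \E\|\XX_{Sk}-\DD_{\overline{\eta}k}\|^2$ over one ``big'' step and then sum the resulting geometric series. Expanding the $S$ increments of $\XX$ over the index window $Sk,\dots,S(k+1)$ together with the single increment of $\DD$ from \eqref{eq:coupled_v2}, one gets $\XX_{S(k+1)}-\DD_{\overline{\eta}(k+1)} = (1-L\overline{\eta})\big(\XX_{Sk}-\DD_{\overline{\eta}k}\big) + A_k$, where $A_k$ gathers: (a) the drift mismatch $\overline{\eta}\big(\f(\XX_{Sk})-\f(\DD_{\overline{\eta}k})\big)$ plus the within-window wobble $\eta\sum_{i=0}^{S-1}\big(\f(\XX_{Sk+i})-\f(\XX_{Sk})\big)$; (b) the bias $\eta\sum_{i}\delta_{Sk+i}$; and (c) the noise mismatch, which I split as the CLT term $\sqrt{\eta^{1+\gamma}}\,\sigma(\XX_{Sk})\big(\epsilon^{S}_k(\XX_{Sk})-\sqrt{S}\,\zeta^{S}_k(\XX_{Sk})\big)$, the within-window state-dependence term $\sqrt{\eta^{1+\gamma}}\sum_i\big(\sigma(\XX_{Sk+i})\epsilon_{Sk+i}(\XX_{Sk+i})-\sigma(\XX_{Sk})\epsilon_{Sk+i}(\XX_{Sk})\big)$, the covariance-shift term $\sqrt{\eta^{1+\gamma}}\sum_i\Delta_{Sk+i}\epsilon_{Sk+i}(\XX_{Sk+i})$, and the term $\sqrt{\overline{\eta}\eta^{\gamma}}\,\big(\sigma(\XX_{Sk})-\sigma(\DD_{\overline{\eta}k})\big)\sqrt{S}\,\zeta^{S}_k(\XX_{Sk})$.

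Taking $\E\|\cdot\|^2$, the key structural observation is that every piece of the noise mismatch is, conditionally on $\mathcal{F}_{Sk}$, mean zero --- by $\E_\epsilon\epsilon_k(x)=\z_d$, the tower property, and the fact that the coupling's first marginal $\N(\z_d,SI_d)$ is centred --- hence $L^2$-orthogonal to $\XX_{Sk}-\DD_{\overline{\eta}k}$; moreover, within each sum over $i$ the summands are martingale differences, so the mixed terms over $i$ vanish and only $S$ diagonal terms survive. Using Assumption \ref{as:key} to bound these: the CLT term is $\le\eta^{1+\gamma}\sigma_1^2\,\W_2^2\big(\N(\z_d,SI_d),\LL(\epsilon^{S}_k)\big)\le\sigma_1^2 M_\epsilon^2(1-\chi_0)\,\overline{\eta}\cdot\tfrac{\eta^{\gamma+\beta}}{S}$; the covariance-shift term is $\le\eta^{1+\gamma}\sum_i\E\,\Tr(\Delta_{Sk+i}^2)\le\overline{\eta}\,M_1^2\eta^{2\alpha}R^2(t)$; the within-window terms (noise and drift wobble jointly) are controlled via the one-window moment bound $\E\|\XX_{Sk+i}-\XX_{Sk}\|^2=\mathcal{O}\big(i\eta\,M^2R^2(t)\big)$ --- itself a consequence of the growth bound and $\E\|\XX_m\|^2\le R^2(t)$ for $\eta m\le t$ --- giving $\mathcal{O}\big(\overline{\eta}^2R^2(t)\big)$; the $\sigma$-Lipschitz/$\zeta$ term is $\le\overline{\eta}\eta^\gamma M_0^2\,a_k$; and the bias is $\le\overline{\eta}^2 M_1^2\eta^{2\alpha}R^2(t)$ by Cauchy--Schwarz over $i$. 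The remaining deterministic cross terms $2(1-L\overline{\eta})\E\big\langle\XX_{Sk}-\DD_{\overline{\eta}k},\,A_k\big\rangle$ I would treat by Young's inequality: $M_0$-Lipschitzness of $\f$ (with \emph{no} monotonicity) yields only $2M_0\overline{\eta}\,a_k$ from the main mismatch, the rest splitting into $\varepsilon\overline{\eta}\,a_k$ plus $\varepsilon^{-1}$ times the squared norms above. Collecting everything gives
$$
a_{k+1}\le\big(1-2L\overline{\eta}+L^2\overline{\eta}^2+2M_0\overline{\eta}+c_1\overline{\eta}\big)a_k+c_2\,\overline{\eta}\Big(\eta^{2\alpha}+\tfrac{\eta^{\gamma+\beta}}{S}(1-\chi_0)+\overline{\eta}\Big)R^2(t),
$$
with $c_1,c_2$ explicit in the constants of Assumption \ref{as:key}; the hypothesis $L\ge 1+M_0+M+2M_0^2+M^2+M^2M_\epsilon^2$ together with $\overline{\eta}=S\eta\le 1$ is exactly what forces the bracket below $1-\overline{\eta}$. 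Unrolling from $a_0=0$ gives $a_k\le c_2\big(\eta^{2\alpha}+\tfrac{\eta^{\gamma+\beta}}{S}(1-\chi_0)+\overline{\eta}\big)R^2(t)$; dividing by $R^2(t)$ and maximising over $\overline{\eta}k'\le t$ yields the claim.

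I expect the genuine difficulties to be twofold. First, the $R^2(t)$-normalisation must be carried uniformly through every bias- and covariance-related estimate: since $\E\|\XX_m\|^2$ may diverge with $m$, each occurrence of $1+\E\|\XX_{Sk+i}\|^2$ over the window has to be bounded by (a constant times) $R^2(t)$, which presupposes the one-window moment bound $\E\|\XX_{Sk+i}-\XX_{Sk}\|^2=\mathcal{O}(i\eta M^2R^2(t))$ established beforehand from the growth condition in Assumption \ref{as:key}. Second, and most delicate, is checking that after the Young splittings the coefficient of $a_k$ is still $1-\Omega(\overline{\eta})$: with $-\f$ not monotone, the non-contractive $2M_0\overline{\eta}\,a_k$ and the square $L^2\overline{\eta}^2 a_k$ of the window-coupling correction both have to be absorbed into $2L\overline{\eta}\,a_k$, and making the arithmetic of this absorption close is exactly what pins down the stated lower bound on $L$ (and the harmless use of $L\overline{\eta}\le 1$ to turn $L^2\overline{\eta}^2$ into $L\overline{\eta}$ in the regime of interest).
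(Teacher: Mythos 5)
Your proposal follows essentially the same route as the paper's proof: the same one-big-step contraction for $a_k=\E\|\XX_{Sk}-\DD_{\overline{\eta}k}\|^2$, the same decomposition of the noise mismatch into CLT-coupling, within-window state-dependence, covariance-shift, and $\sigma$-Lipschitz pieces, the same use of conditional unbiasedness and martingale orthogonality across $i$, the same one-window moment bound, and the same unrolling of $a_{k+1}\le(1-\overline{\eta})a_k+\overline{\eta}\,c_2(\cdots)R^2(t)$. The only step you assert without justification is the bound $\E\big\|\big(\sigma(\XX_{Sk})-\sigma(\DD_{\overline{\eta}k})\big)\zeta^{S}_k\big\|^2\le M_0^2\,a_k$: since $\zeta^{S}_k$ is Gaussian rather than the noise $\epsilon$ appearing in Assumption~\ref{as:key}, this requires first extracting the Frobenius-norm Lipschitz property $\|\sigma(x)-\sigma(x')\|_F^2\le M_0^2\|x-x'\|^2$ from the assumption $\E_\epsilon\|\sigma(x)\epsilon_k(x)-\sigma(x')\epsilon_k(x')\|^2\le M_0^2\|x-x'\|^2$, which the paper does via a separate polarization-identity argument.
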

Here and after, in the main part, we use $\mathcal{O}$-notation to hide constants that do not depend on $\eta$, $k$, and $t$ for simplicity. The full statements are given in Appendix. Since the chain $\DD_{{\overline{{\eta}} k}}$ is artificially introduced by us, we can vary $S$, in particular, optimize the estimate from Lemma \ref{lemma:A2_main}  
\begin{corollary} \label{cor1}
Under the conditions of Lemma \ref{lemma:A2_main}. 
If we take $S =  \eta^{-\frac{1-\beta}{2}}(1 - \chi_0) + \chi_0 \ge 1$,
% Recall the constant
% $${\theta} = \big(2\alpha\big) \wedge  \Big(\gamma + \frac{1+\chi_{0}}{2} + \frac{1-\chi_{0}}{2}\beta\Big)\wedge \big(1+\chi_{0}+\beta(1-\chi_{0})\big)\in (0, 2]$$
then it holds that:
$$
{\Delta}^{S}_{t} \le \mathcal{O}\left({\eta}^{2\theta}\right) \quad \text{with} \quad \theta = \min \left\{\alpha ; \frac{(\gamma+1)(1+\chi_{0})+(\gamma+\beta)(1-\chi_{0})}{4} \right\}.
$$
% where $C_{1}' = (4 - \chi_{0}) C_{1}$.
\end{corollary}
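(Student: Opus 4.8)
The corollary follows from Lemma~\ref{lemma:A2_main} purely by substitution: one plugs the prescribed $S$ into the three error terms of that lemma and reads off the dominant power of $\eta$. I would organize the argument in three short moves.

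\textbf{Checking the hypotheses of Lemma~\ref{lemma:A2_main}.} Before invoking the lemma I would verify $S\ge 1$ and $S\eta\le 1$. Since $\eta\in(0,1]$ and $\beta\in[0,1]$, the exponent $-\tfrac{1-\beta}{2}$ is non-positive, so $\eta^{-\frac{1-\beta}{2}}\ge 1$; hence $S=\eta^{-\frac{1-\beta}{2}}(1-\chi_0)+\chi_0\ge 1$ in both the Gaussian case ($\chi_0=1$, $S=1$) and the non-Gaussian case ($\chi_0=0$, $S=\eta^{-\frac{1-\beta}{2}}$). Likewise $\overline{\eta}=S\eta$ equals $\eta$ in the Gaussian case and $\eta^{\,1-\frac{1-\beta}{2}}=\eta^{\frac{1+\beta}{2}}$ in the non-Gaussian case; in both the exponent is $\ge\tfrac12>0$, so $\overline{\eta}\le 1$. (If one insists on $S\in\mathbb{N}$ one replaces $S$ by $\lceil S\rceil$, which only rescales the hidden constants.) Thus Lemma~\ref{lemma:A2_main} applies with its constant $L$.

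\textbf{The Gaussian case $\chi_0=1$.} Here $S=1$, the term $\tfrac{\eta^{\gamma+\beta}}{S}(1-\chi_0)$ vanishes, and Lemma~\ref{lemma:A2_main} gives $\Delta^S_t=\mathcal{O}(\eta^{2\alpha}+\eta^{1+\gamma})$, i.e.\ exponent $\min\{2\alpha,\,1+\gamma\}$. With $\chi_0=1$ the quantity in the statement is $\theta=\min\{\alpha,\tfrac{(\gamma+1)\cdot2}{4}\}=\min\{\alpha,\tfrac{\gamma+1}{2}\}$, so $2\theta=\min\{2\alpha,\gamma+1\}$, matching exactly. \textbf{The non-Gaussian case $\chi_0=0$.} Substituting $S=\eta^{-\frac{1-\beta}{2}}$ turns the remaining two error terms of the lemma into powers of $\eta$ with the \emph{same} exponent: the central-limit approximation term becomes $\eta^{\gamma+\beta+\frac{1-\beta}{2}}=\eta^{\gamma+\frac{1+\beta}{2}}$, and the coupled-chain discretization term also becomes $\eta^{\gamma+\frac{1+\beta}{2}}$. (This equalization is precisely why the exponent $-\tfrac{1-\beta}{2}$ is optimal: enlarging $S$ improves the first term and worsens the second, and conversely.) Hence
\begin{equation*}
\Delta^S_t=\mathcal{O}\!\left(\eta^{2\alpha}+\eta^{\gamma+\frac{1+\beta}{2}}\right)=\mathcal{O}\!\left(\eta^{\,2\min\{\alpha,\ \frac{(\gamma+1)+(\gamma+\beta)}{4}\}}\right),
\end{equation*}
using $\gamma+\tfrac{1+\beta}{2}=\tfrac{(\gamma+1)+(\gamma+\beta)}{2}$, and with $\chi_0=0$ the bracketed exponent is again $2\theta$. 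Merging the two cases through the indicator $\chi_0$ gives $\theta=\min\{\alpha,\tfrac{(\gamma+1)(1+\chi_0)+(\gamma+\beta)(1-\chi_0)}{4}\}$ and $\Delta^S_t\le\mathcal{O}(\eta^{2\theta})$.

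\textbf{Where the difficulty (if any) lies.} There is no genuine obstacle here — this is exponent bookkeeping layered on top of Lemma~\ref{lemma:A2_main}. If one must name the delicate points: (i) one has to confirm $S\ge1$ and $S\eta\le1$ so the lemma is legitimately applicable (and that rounding $S$ to a nearby integer is harmless); and (ii) one should observe that the prescribed $S$ is the unique power of $\eta$ that balances the CLT term against the coupled-chain discretization term, so the resulting exponent $\tfrac{(\gamma+1)+(\gamma+\beta)}{4}$ cannot be improved within this construction — any other choice $S=\eta^{-c}$ would give a strictly smaller exponent on one of the two competing terms.
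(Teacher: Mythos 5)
Your overall strategy --- substitute the prescribed $S$ into the three terms of Lemma~\ref{lemma:A2_main} and read off the dominant power of $\eta$ --- is exactly the paper's (implicit) proof: the paper treats the corollary as immediate and, in the proof of Theorem~\ref{th:main}, simply asserts that $\delta=\eta^{2\alpha}+\tfrac{\eta^{\gamma+\beta}}{S}(1-\chi_0)+S\eta\le 3\eta^{2\theta}$ for this choice of $S$. Your preliminary checks that $S\ge 1$ and $S\eta\le 1$ are correct and worth making explicit.

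The gap is in the substitution itself. The third term of Lemma~\ref{lemma:A2_main} is $\overline{\eta}=S\eta$, with no factor of $\eta^{\gamma}$ attached. In the Gaussian case ($S=1$) it equals $\eta$, not $\eta^{1+\gamma}$ as you write; in the non-Gaussian case ($S=\eta^{-\frac{1-\beta}{2}}$) it equals $\eta^{\frac{1+\beta}{2}}$, not $\eta^{\gamma+\frac{1+\beta}{2}}$. Consequently your claim that the prescribed $S$ ``balances'' the CLT term $\tfrac{\eta^{\gamma+\beta}}{S}$ against the term $S\eta$ holds only for $\gamma=0$ (equating those two terms would require $S=\eta^{-\frac{1-\beta-\gamma}{2}}$; with the paper's $S$ the term $S\eta$ exceeds the CLT term by a factor $\eta^{-\gamma}$). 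For $\gamma=0$ your bookkeeping is therefore correct and reproduces the corollary, but for $\gamma=1$ Lemma~\ref{lemma:A2_main} as stated only yields $\eta^{\min\{2\alpha,\,1\}}$ (Gaussian) resp.\ $\eta^{\min\{2\alpha,\,\frac{1+\beta}{2}\}}$ (non-Gaussian), which is strictly weaker than the claimed $\eta^{2\theta}$ with $2\theta=\min\{2\alpha,\gamma+1\}$ resp.\ $\min\{2\alpha,\gamma+\tfrac{1+\beta}{2}\}$. To be fair, this mismatch points at a looseness in the paper itself: the $\overline{\eta}$ term in Lemma~\ref{lemma:A2_main} descends from the crude one-step bound of Lemma~\ref{lem:8}, $\mathbb{E}\|X_{Sk+i}-X_{Sk}\|^2\lesssim i\eta$, whereas a sharper accounting (noise contributes $i\eta^{1+\gamma}$, drift contributes $(i\eta)^2$) would replace $\overline{\eta}$ by $\overline{\eta}\eta^{\gamma}+\overline{\eta}^2$ and recover your exponent at least in the Gaussian case. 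But as a derivation of the corollary \emph{from the lemma as stated}, your argument inserts a factor $\eta^{\gamma}$ that is not present in the lemma, and that is precisely the step that fails when $\gamma=1$.
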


\vspace{-0.2cm}
\subsection{Naive Interpolation of the Approximation}
\vspace{-0.2cm}

As mentioned earlier, if we put $\overline{\eta} L(\XX_{Sk}-\DD_{\overline{\eta} k})$ to a biased drift, we can look at the chain \eqref{eq:coupled_v2} as a chain with Gaussian noise, i.e., we can rewrite \eqref{eq:coupled_v2} in the same form as \eqref{eq:main_chain}:
$$
\DD_{\overline{\eta} (k+1)} = \DD_{\overline{\eta} k} + \overline{\eta}(\f(\DD_{\overline{\eta} k}) +  \g_{\overline{\eta} k}^{S}) + \sqrt{\overline{\eta}}\sqrt{{\eta}^{\gamma}}\sigma(\DD_{\overline{\eta} k}){\zeta}^{S}_k
% _{\tiny{\text{Looks "like" Euler-Maryuama with random bias}}}
,
$$
where we introduce $\g_{\overline{\eta} k}^{S} = L(\XX_{Sk}-\DD_{\overline{\eta} k})$.

For a chain with Gaussian noise, there are well-known tricks for relating them to some diffusion process driven by the Brownian motion process $W_{t}$. In most of the works, e.g., \citep{raginsky2017non}, on Langevin processes from Table \ref{tab:comparison0}, one embeds a process with Gaussian noise into diffusion without any hesitation due to normality of the noise, which allows one to think that the noises are increments of the Brownian motion process $W_{t}$ and intermediate points are obtained through naive mid-point interpolation (e.g., by replacing $\overline{\eta}$ with $\delta t$ for $\delta t\in [0, \overline{\eta}]$ allows to define $\DD_{\overline{\eta} k + \delta t}$). 
Since ${\zeta}^S_k \sim \N(\z_d, I_d)$, we consider ${\zeta}^S_k = \sqrt{\overline{\eta}^{-1}}(\WW_{(k+1)\overline{\eta}} - \WW_{k\overline{\eta}})$. One can also complete the notation for $\g^S_{{t}} = \g_{[t/\overline{\eta}]\overline{\eta}}^{S}$ for all $t$ (before we defined it only for $t = \overline{\eta} k$ with integer $k$). Then we can define the following diffusion:
$$\mathrm{d}Y_{t} = \big(\f(Y_{t}) + (\g^*_{t} + \g^{S}_{t})\big)\mathrm{d} t + \sqrt{{\eta}^{\gamma}}\big(\sigma(Y_{t}) + \Sigma^*_{t}\big)\mathrm{d} \WW_{t},$$
with new notation $\g^*_{t} = \f(\DD_{[t/\overline{\eta}]\overline{\eta}}) - \f(Y_{t})$ and $\Sigma^*_{t} =  \sigma(\DD_{[t/\overline{\eta}]\overline{\eta}}) - \sigma(Y_{{t}})$. 

The diffusion $Y_{t}$ has a "ladder" drift (since $\f(Y_{t}) + (\g^*_{t} + \g^{S}_{t})$ changes only in $t = k \eta$), thereby $Y_{t}$ mimicks the chain $\DD_{\overline{\eta}k}$. If there were no noise terms in the diffusion and in the chain, $\mathrm{d}Y_{t}$ would fully coincide with $\DD_{\overline{\eta}k}$. Nevertheless, we have noise components, and in diffusion, it is provided by the Brownian process $W_{t}$ and is continuous in time, unlike the Gaussian, but "discrete" noise term in $\DD_{\overline{\eta}k}$ (depending on ${\zeta}^S_k$). We need to estimate the differences in these noises. This is the idea of estimating the difference between $\DD_{\overline{\eta}k}$ and $\mathrm{d}Y_{t}$. We follow a similar path of proving as \citep{raginsky2017non}, but we also consider that we now face non-dissipativity. In particular, the next lemma gives a bound that includes $R^2(k{\eta})$, which is not the case of \citep{raginsky2017non}. 
\begin{lemma}\label{lemma:A3_main}
Let Assumption \ref{as:key} holds. Then for any $k \in \mathbb{N}_0$:
$$\sup_{t\le k{\eta}}\mathbb{E}\big\|\DD_{{[t/(\overline{\eta})]\overline{\eta}}}-Y_{{t}}\big\|^2 = \mathcal{O}\left(\overline{\eta}^{1+\gamma} R^2(k{\eta})\right),$$
where $R^2(k \eta) \ge \max\left\{ 1;\max_{k' \le k} \mathbb{E}\| X_{k'} \|^2\right\}$. 
% $C_{2} = \frac{L^{2}}{M_{0}^{2}}C_{1} + M^2(1 +3C_{1})R_{0}^2$.
\end{lemma}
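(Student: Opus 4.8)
The plan is to exploit the fact that the interpolating diffusion $Y_t$ is, by its very construction, a continuous-time Euler--Maruyama scheme whose coefficients are frozen on each grid cell $[k\overline{\eta},(k+1)\overline{\eta})$. First I would observe that the corrections $\g^*_t$ and $\Sigma^*_t$ were defined precisely so that $\f(Y_t)+\g^*_t=\f(\DD_{[t/\overline{\eta}]\overline{\eta}})$ and $\sigma(Y_t)+\Sigma^*_t=\sigma(\DD_{[t/\overline{\eta}]\overline{\eta}})$, so the SDE collapses to $\dd Y_t=\big(\f(\DD_{[t/\overline{\eta}]\overline{\eta}})+\g^S_{[t/\overline{\eta}]\overline{\eta}}\big)\dd t+\sqrt{\eta^{\gamma}}\,\sigma(\DD_{[t/\overline{\eta}]\overline{\eta}})\,\dd W_t$. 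Integrating this over one cell and using $\zeta^S_k=\sqrt{\overline{\eta}^{-1}}(W_{(k+1)\overline{\eta}}-W_{k\overline{\eta}})$, the increment of $Y$ over cell $k$ equals $\overline{\eta}\big(\f(\DD_{k\overline{\eta}})+\g^S_{k\overline{\eta}}\big)+\sqrt{\overline{\eta}\,\eta^{\gamma}}\,\sigma(\DD_{k\overline{\eta}})\zeta^S_k$, which is exactly the recursion defining $\DD_{\overline{\eta}(k+1)}$. Since $Y_0=\DD_0=x_0$, induction gives $Y_{k\overline{\eta}}=\DD_{k\overline{\eta}}$ for every $k$, so $\DD_{[t/\overline{\eta}]\overline{\eta}}-Y_t=Y_{[t/\overline{\eta}]\overline{\eta}}-Y_t$ and the task reduces to bounding the within-cell oscillation of $Y$.

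Second, for $t\in[k\overline{\eta},(k+1)\overline{\eta}]$ I would write $Y_t-Y_{k\overline{\eta}}=(t-k\overline{\eta})\big(\f(\DD_{k\overline{\eta}})+\g^S_{k\overline{\eta}}\big)+\sqrt{\eta^{\gamma}}\,\sigma(\DD_{k\overline{\eta}})(W_t-W_{k\overline{\eta}})$, apply $\|a+b\|^2\le 2\|a\|^2+2\|b\|^2$, and take expectations, conditioning on the natural filtration $\mathcal{F}_{k\overline{\eta}}$ for the stochastic term: there $\DD_{k\overline{\eta}}$ and $\g^S_{k\overline{\eta}}=L(\XX_{Sk}-\DD_{k\overline{\eta}})$ are measurable while $W_t-W_{k\overline{\eta}}$ is independent with covariance $(t-k\overline{\eta})I_d$, hence $\E\big[\|\sigma(\DD_{k\overline{\eta}})(W_t-W_{k\overline{\eta}})\|^2\mid\mathcal{F}_{k\overline{\eta}}\big]=(t-k\overline{\eta})\Tr\big(\sigma\sigma^{\mathrm{T}}(\DD_{k\overline{\eta}})\big)$. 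Using $t-k\overline{\eta}\le\overline{\eta}$ this yields $\E\|Y_t-Y_{k\overline{\eta}}\|^2\le 2\overline{\eta}^2\,\E\|\f(\DD_{k\overline{\eta}})+\g^S_{k\overline{\eta}}\|^2+2\overline{\eta}\,\eta^{\gamma}\,\E\Tr\big(\sigma\sigma^{\mathrm{T}}(\DD_{k\overline{\eta}})\big)$.

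Third, I would bound both expectations by $\mathcal{O}(R^2(k\eta))$. The linear-growth consequences of Assumption~\ref{as:key} give $\|\f(x)\|^2\le 2\|b(0_d)\|^2+2M_0^2\|x\|^2$ and $\Tr(\sigma\sigma^{\mathrm{T}}(x))\le d\sigma_1^2$, so it suffices to control $\E\|\DD_{k\overline{\eta}}\|^2$ and $\E\|\XX_{Sk}-\DD_{k\overline{\eta}}\|^2$. For the latter I invoke Lemma~\ref{lemma:A2_main}: with $S\eta\le 1$ and $\eta\le 1$, $\Delta^S_{k\eta}=\mathcal{O}\big(\eta^{2\alpha}+\eta^{\gamma+\beta}/S+\overline{\eta}\big)=\mathcal{O}(1)$, whence $\E\|\XX_{Sk}-\DD_{k\overline{\eta}}\|^2\le R^2(k\eta)\,\Delta^S_{k\eta}=\mathcal{O}(R^2(k\eta))$; then $\E\|\DD_{k\overline{\eta}}\|^2\le 2\E\|\XX_{Sk}\|^2+2\E\|\XX_{Sk}-\DD_{k\overline{\eta}}\|^2=\mathcal{O}(R^2(k\eta))$, where $\E\|\XX_{Sk}\|^2\le R^2(k\eta)$ follows from the index bookkeeping: since the supremum runs over $t\le k\eta$, the relevant cell index $j=[t/\overline{\eta}]$ satisfies $Sj\le k$, so every $\XX$-index that appears stays within the range controlled by $R^2(k\eta)$, and likewise the horizon in Lemma~\ref{lemma:A2_main} can be taken equal to $k\eta$. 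Finally, since $\eta\le\overline{\eta}=S\eta\le 1$ we have $\overline{\eta}^2\le\overline{\eta}^{1+\gamma}$ and $\overline{\eta}\,\eta^{\gamma}\le\overline{\eta}^{1+\gamma}$ for $\gamma\in\{0,1\}$, so $\E\|Y_t-Y_{k\overline{\eta}}\|^2=\mathcal{O}(\overline{\eta}^{1+\gamma}R^2(k\eta))$ uniformly in $t\le k\eta$, which is the claim.

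The step I expect to cause the most trouble is not a single estimate but the careful handling of the non-dissipative regime: unlike in \citep{raginsky2017non}, neither $\E\|\DD_{k\overline{\eta}}\|^2$ nor $\E\|\XX_{Sk}\|^2$ can be bounded by an absolute constant, so the factor $R^2(\cdot)$ must be threaded through every inequality and the indices $k$, $Sk$, $\overline{\eta}=S\eta$ and the argument of $R^2$ kept exactly consistent. A second, more technical point is to justify rigorously that the $\zeta^S_k$ may be realized as rescaled Brownian increments --- i.e.\ that they are i.i.d.\ $\N(0_d,I_d)$ and independent of the past --- which is what legitimizes the identification $Y_{k\overline{\eta}}=\DD_{k\overline{\eta}}$ underlying the entire argument.
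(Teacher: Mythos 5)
Your proposal is correct and follows essentially the same route as the paper's proof: identify $\DD_{[t/\overline{\eta}]\overline{\eta}}-Y_{t}$ with the within-cell increment of $Y$, split it into the frozen drift (including $\g^{S}$) and the Brownian part, and control both via the linear-growth bounds together with Lemma~\ref{lemma:A2_main} and Lemma~\ref{lemma:R_bound} to thread $R^2(k\eta)$ through. The only genuine addition is that you make explicit the verification $Y_{k\overline{\eta}}=\DD_{\overline{\eta}k}$, which the paper treats as immediate from the construction of the interpolation.
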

From the definition of $\mathcal{W}_{2}$, we immediately have.
\begin{corollary} \label{cor2}
Under the conditions of Lemma \ref{lemma:A3_main}, for any time horizon $t = k{\eta} \geq 0$ it holds:
$$\mathcal{W}_{2}^{2}(\mathcal{L}(\DD_{[t/({\overline{\eta}})]{\overline{\eta}}}), \mathcal{L}(Y_{t})) \le \sup_{t\le k{\eta}}\mathbb{E}\big\|\DD_{{[t/(\overline{\eta})]\overline{\eta}}}-Y_{{t}}\big\|^2 = \mathcal{O}\left( \overline{\eta}^{1+\gamma}R^2(k{\eta})\right) .$$%(S^2\eta^2 + \overline{\eta}^{1+\gamma}) 
\end{corollary}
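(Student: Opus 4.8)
The plan is to derive Corollary~\ref{cor2} directly from Lemma~\ref{lemma:A3_main} via the coupling (Kantorovich) characterization of the quadratic Wasserstein distance. Recall that for probability measures $\mu,\nu$ on $\mathbb{R}^d$ with finite second moments, $\mathcal{W}_2^2(\mu,\nu) = \inf_{(U,V)} \mathbb{E}\|U-V\|^2$, where the infimum runs over all pairs of random vectors $(U,V)$ defined on a common probability space with marginals $\mathcal{L}(U)=\mu$ and $\mathcal{L}(V)=\nu$; in particular, \emph{any} such pair furnishes an upper bound on $\mathcal{W}_2^2(\mu,\nu)$.

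The first step is to observe that, by their very construction in Section~\ref{sec:ca_wc}, the coupled chain $\DD_{\overline{\eta}k}$ of \eqref{eq:coupled_v2} and the interpolating diffusion $Y_t$ live on one and the same probability space: both are driven by the same Brownian motion $W_t$ (equivalently, by the same Gaussian increments $\zeta_k^S$), while $Y_t$ depends in addition on $X_{Sk}$ through the term $\g^S_t$, which is defined on that same space. Hence, for each fixed horizon $t=k\eta$, the pair $\big(\DD_{[t/\overline{\eta}]\overline{\eta}},\, Y_t\big)$ is an admissible coupling of $\mathcal{L}(\DD_{[t/\overline{\eta}]\overline{\eta}})$ and $\mathcal{L}(Y_t)$, and therefore
\[
\mathcal{W}_2^2\big(\mathcal{L}(\DD_{[t/\overline{\eta}]\overline{\eta}}),\,\mathcal{L}(Y_t)\big) \;\le\; \mathbb{E}\big\|\DD_{[t/\overline{\eta}]\overline{\eta}} - Y_t\big\|^2 .
\]

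The second (and final) step is to bound the right-hand side by the supremum over the whole time window and apply Lemma~\ref{lemma:A3_main}, giving $\mathbb{E}\|\DD_{[t/\overline{\eta}]\overline{\eta}} - Y_t\|^2 \le \sup_{s\le k\eta}\mathbb{E}\|\DD_{[s/\overline{\eta}]\overline{\eta}} - Y_s\|^2 = \mathcal{O}(\overline{\eta}^{1+\gamma} R^2(k\eta))$, which is exactly the claimed estimate. There is no substantive obstacle here — all the work sits in Lemma~\ref{lemma:A3_main} — and the only minor points to verify are that $\DD$ and $Y$ are genuinely realized on a common space (so that the explicit coupling is legitimate) and that both have finite second moments so that $\mathcal{W}_2$ is well-defined; the former is immediate from the constructions above, and the latter follows from the linear-growth bound recorded after Assumption~\ref{as:key}.
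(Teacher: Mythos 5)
Your proposal is correct and is exactly the argument the paper intends: the paper dismisses this corollary with ``From the definition of $\mathcal{W}_{2}$, we immediately have,'' i.e.\ the pair $\big(\DD_{[t/\overline{\eta}]\overline{\eta}}, Y_{t}\big)$, being realized on a common probability space by construction, is an admissible coupling whose $L^{2}$-distance upper bounds $\mathcal{W}_{2}$, after which Lemma~\ref{lemma:A3_main} supplies the rate. You have merely spelled out the same one-line observation in full, including the (correct) checks that the coupling is legitimate and that second moments are finite.
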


At this point, we linked the initial chain $X_k$ to the auxiliary chain $Y_{\overline{\eta} k}$, and then the chain $Y_{\overline{\eta} k}$ to the diffusion $Y_t$. However, the diffusion $Y_t$ differs from the target diffusion $Z_t$ we aim for. Therefore, we are left to link diffusions $Y_t$ and $Z_t$ - we will spend the following three subsections on this.

\vspace{-0.2cm}
\subsection{Covariance Corrected Interpolation}
\vspace{-0.2cm}

The problem with the relation of $Y_t$ and $Z_t$ lies primarily in the fact that these two diffusions have different covariance coefficients ($\big({\sigma}(Y_{{t}}) + \Sigma^*_{{t}}\big)$ and $\sigma(Z_{t})$). In that case, the Girsanov theorem states that diffusions with \emph{different} covariances are singular to each other. Meanwhile, if the covariance coefficients are the same, then the Girsanov theorem \citep{Liptser2001} gives a more optimistic answer to the possibilities of connecting such diffusions. Then our goal now is to deal with $\Sigma^*_{{t}}$ in $Y_{{t}}$.

It is proposed to build another auxiliary diffusion. We want the problems of the covariance coefficient in $Y_t$ to move elsewhere, e.g., to drift, in the new chain $Z^Y_t$. We have already done a similar trick with the coupling of $\XX_k$ and $\DD_{\overline{\eta} k}$ in Section \ref{sec:ca_wc}. Then let us consider the following diffusion:
$$\dd \ZZ^{Y}_{{t}} = (\f(\ZZ^{Y}_{{t}}) + \g_{{t}}^S)\dd t + \underbrace{L_{1}(Y_{{t}}-\ZZ^{Y}_{{t}})\dd t}_{\tiny{\text{Window coupling}}} + \sqrt{\eta^{\gamma}}\sigma(\ZZ^{Y}_{{t}})\dd \WW_{t},$$
where $L_1$ we will define later.
This coupling helps to eliminate not only $\Sigma_t^*$ but also $\g^*_t$. The basic idea is that $\Sigma_t^*$ and $\g^*_t$ are differences of $\sigma$ and $b$, respectively. At the same time, $\sigma$ and $b$ are Lipschitz, which means that we can bound $\Sigma^*_t$ and $\g^*_t$ via the norm of the argument difference. Therefore, if we choose $L_1$ large enough, we compensate bounds on $\Sigma^*_t$ and $\g^*_t$. In particular, the following statement holds.
\begin{lemma}\label{lemma:A4-A3}
Let Assumption \ref{as:key} holds. Then for $L_{1} \ge 2M_{0}+4M_{0}\eta^{\gamma}$ and any time horizon $t = k \eta \geq 0$:
$$\mathcal{W}_{2}^{2}(\mathcal{L}(Y_{t}), \mathcal{L}(\ZZ^{Y}_{t})) \le \sup_{t'\le t}\mathbb{E}\Big\|\DD_{[t'/(\overline{\eta})]\overline{\eta}}-Y_{{t'}}\Big\|^2.$$
% where $C_{1}(\eta) = M_{0}+2M_{0}^\eta^{\gamma}$
\end{lemma}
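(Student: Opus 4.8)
The plan is to bound $\mathcal{W}_{2}$ by the synchronous coupling in which $Y_{t}$ and $\ZZ^{Y}_{t}$ are driven by the \emph{same} Brownian motion $W_{t}$ and started from the common point $x_{0}$; then $(Y_{t},\ZZ^{Y}_{t})$ is an admissible coupling of the two laws, so $\mathcal{W}_{2}^{2}(\mathcal{L}(Y_{t}),\mathcal{L}(\ZZ^{Y}_{t}))\le\E\|Y_{t}-\ZZ^{Y}_{t}\|^{2}$, and it suffices to control $u_{t}:=\E\|D_{t}\|^{2}$ for $D_{t}:=Y_{t}-\ZZ^{Y}_{t}$. Existence and uniqueness of the strong solution $\ZZ^{Y}_{t}$ is routine: its drift is Lipschitz in the state, the extra term $\g^{S}_{t}$ is adapted and piecewise constant, and $\sigma$ is bounded, which also makes the relevant stochastic integrals true martingales with vanishing expectation on every finite horizon.

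First I would write the SDE for $D_{t}$. The whole argument hinges on two cancellations: the common window-coupling drift $\g^{S}_{t}$ drops out, and by the very definitions of $\g^{*}_{t}$ and $\Sigma^{*}_{t}$ one has $\f(Y_{t})+\g^{*}_{t}=\f(\DD_{[t/\overline{\eta}]\overline{\eta}})$ and $\sigma(Y_{t})+\Sigma^{*}_{t}=\sigma(\DD_{[t/\overline{\eta}]\overline{\eta}})$. Hence
$$
\dd D_{t}=\big(\f(\DD_{[t/\overline{\eta}]\overline{\eta}})-\f(\ZZ^{Y}_{t})-L_{1}D_{t}\big)\dd t+\sqrt{\eta^{\gamma}}\big(\sigma(\DD_{[t/\overline{\eta}]\overline{\eta}})-\sigma(\ZZ^{Y}_{t})\big)\dd W_{t},
$$
with $D_{0}=0$.

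Next I would apply Ito's formula to $\|D_{t}\|^{2}$, take expectations (killing the martingale term), and split both increments through $Y_{t}$, i.e. $\f(\DD_{[t/\overline{\eta}]\overline{\eta}})-\f(\ZZ^{Y}_{t})=(\f(\DD_{[t/\overline{\eta}]\overline{\eta}})-\f(Y_{t}))+(\f(Y_{t})-\f(\ZZ^{Y}_{t}))$ and likewise for $\sigma$. Using $M_{0}$-Lipschitzness of $b$, the Lipschitz bound on $\sigma$ from Assumption \ref{as:key} (which after integrating out $\epsilon$ is exactly $\|\sigma(x)-\sigma(y)\|_{F}\le M_{0}\|x-y\|$), and Young's inequality on the cross terms carrying the factor $\|\DD_{[t/\overline{\eta}]\overline{\eta}}-Y_{t}\|$, this yields a differential inequality of the shape
$$
\frac{\dd}{\dd t}u_{t}\;\le\;-\,a\,u_{t}\;+\;c\,\sup_{s\le t}\E\big\|\DD_{[s/\overline{\eta}]\overline{\eta}}-Y_{s}\big\|^{2},
$$
where $a=2L_{1}-\mathcal{O}(M_{0}+M_{0}^{2}\eta^{\gamma})$ and $c=\mathcal{O}(M_{0}+M_{0}^{2}\eta^{\gamma})$. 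Since $D_{0}=0$, multiplying by the integrating factor $e^{at}$ and integrating gives $u_{t}\le\frac{c}{a}(1-e^{-at})\sup_{s\le t}\E\|\DD_{[s/\overline{\eta}]\overline{\eta}}-Y_{s}\|^{2}$; once $L_{1}$ is taken at least as large as in the hypothesis so that $a>0$ and $c\le a$, the right-hand side is $\le\sup_{s\le t}\E\|\DD_{[s/\overline{\eta}]\overline{\eta}}-Y_{s}\|^{2}$, which together with $\mathcal{W}_{2}^{2}\le u_{t}$ is exactly the claim.

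The main obstacle I anticipate is entirely quantitative: one needs the constant $c$ in front of $\sup_{s}\E\|\DD_{[s/\overline{\eta}]\overline{\eta}}-Y_{s}\|^{2}$ to come out no larger than $a$ (so the final prefactor is $1$, not some bigger absolute constant), which forces one to track the Lipschitz constants through the drift difference, the cross term, and the Ito correction \emph{simultaneously} and to tune the Young weights accordingly — this is the single place where the precise threshold on $L_{1}$ is consumed. Conceptually, the role of $L_{1}$ is to manufacture the dissipativity that $b$ is not assumed to have: without the window-coupling term $L_{1}(Y_{t}-\ZZ^{Y}_{t})$ the difference $D_{t}$ would generically grow in $t$, exactly as in the discrete coupling step of Section \ref{sec:ca_wc}, and the Lipschitzness of $\sigma$ and $b$ is precisely what lets a large enough $L_{1}$ absorb the diffusion-coefficient mismatch $\Sigma^{*}_{t}$ and the drift mismatch $\g^{*}_{t}$ at once.
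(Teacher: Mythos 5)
Your proposal is correct and takes essentially the same route as the paper's proof: the paper likewise works with $Q_t = Y_t - Z^Y_t$ under the synchronous coupling, applies Ito's formula to $\|Q_t\|^2$, splits the drift and diffusion mismatches through $Y_t$ using the $M_0$-Lipschitz bounds on $b$ and on $\sigma$ (in Frobenius norm), and picks $L_1$ exactly so that the resulting prefactor $(2M_0+4M_0^2\eta^{\gamma})/(2L_1-2M_0-4M_0^2\eta^{\gamma})$ is at most $1$. The only cosmetic discrepancy is that you correctly carry $\sqrt{\eta^{\gamma}}$ in the noise difference where the paper's displayed computation has a stray $\sqrt{2\eta^{\gamma}}$, which affects nothing beyond constants.
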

From Lemma \ref{lemma:A3_main}, we immediately have.
\begin{corollary} \label{cor3} Let Assumption \ref{as:key} holds. Then for any time horizon $t = k \eta \geq 0$:
$$\mathcal{W}_{2}^{2}(\mathcal{L}(Y_{t}), \mathcal{L}(\ZZ^{Y}_{t})) =  \mathcal{O}\left( \overline{\eta}^{1+\gamma} R^2(k\eta)\right) ,$$
where $R^2(k \eta) \ge \max\left\{ 1;\max_{k' \le k} \mathbb{E}\| X_{k'} \|^2\right\}$.
\end{corollary}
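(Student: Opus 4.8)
The plan is to use the synchronous coupling that is already built into the construction: $Y_t$ and $\ZZ^Y_t$ are driven by the \emph{same} Brownian motion $\WW_t$, so $(Y_t,\ZZ^Y_t)$ is an admissible coupling and $\mathcal{W}_{2}^{2}(\mathcal{L}(Y_t),\mathcal{L}(\ZZ^Y_t)) \le \E\|Y_t - \ZZ^Y_t\|^2$; it therefore suffices to bound the pathwise error $e_t := Y_t - \ZZ^Y_t$. The first step is to notice that the coefficients of $Y_t$ telescope: since $\g^*_t = \f(\DD_{[t/\overline{\eta}]\overline{\eta}}) - \f(Y_t)$ and $\Sigma^*_t = \sigma(\DD_{[t/\overline{\eta}]\overline{\eta}}) - \sigma(Y_t)$, we have $\f(Y_t) + \g^*_t = \f(\DD_{[t/\overline{\eta}]\overline{\eta}})$ and $\sigma(Y_t) + \Sigma^*_t = \sigma(\DD_{[t/\overline{\eta}]\overline{\eta}})$, i.e.\ $\dd Y_t = (\f(\DD_{[t/\overline{\eta}]\overline{\eta}}) + \g^S_t)\dd t + \sqrt{\eta^{\gamma}}\,\sigma(\DD_{[t/\overline{\eta}]\overline{\eta}})\,\dd \WW_t$. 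Subtracting the SDE for $\ZZ^Y_t$ the common drift $\g^S_t\,\dd t$ cancels and the Window-coupling term contributes $-L_1 e_t\,\dd t$, so that $e_0 = 0$ and
\[ \dd e_t = \bigl(\f(\DD_{[t/\overline{\eta}]\overline{\eta}}) - \f(\ZZ^Y_t) - L_1 e_t\bigr)\dd t + \sqrt{\eta^{\gamma}}\bigl(\sigma(\DD_{[t/\overline{\eta}]\overline{\eta}}) - \sigma(\ZZ^Y_t)\bigr)\dd \WW_t. \]

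Next I would apply It\^o's formula to $\|e_t\|^2$ and take expectations. The stochastic integral is a genuine martingale because $Y_t$ and $\ZZ^Y_t$ have finite second moments on finite horizons and $\sigma$ is uniformly bounded, so it drops out, leaving for almost every $t$
\[ \frac{\dd}{\dd t}\E\|e_t\|^2 = 2\,\E\bigl\langle e_t,\, \f(\DD_{[t/\overline{\eta}]\overline{\eta}}) - \f(\ZZ^Y_t)\bigr\rangle - 2L_1\,\E\|e_t\|^2 + \eta^{\gamma}\,\E\bigl\|\sigma(\DD_{[t/\overline{\eta}]\overline{\eta}}) - \sigma(\ZZ^Y_t)\bigr\|_F^2, \]
where the Frobenius norm appears because $\sigma$ is symmetric. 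Now the purpose of the Window coupling becomes visible: I would insert $Y_t$, writing $\f(\DD_{[t/\overline{\eta}]\overline{\eta}}) - \f(\ZZ^Y_t) = (\f(Y_t) - \f(\ZZ^Y_t)) + (\f(\DD_{[t/\overline{\eta}]\overline{\eta}}) - \f(Y_t))$ and similarly for $\sigma$, then bound $\|\f(Y_t) - \f(\ZZ^Y_t)\| \le M_0\|e_t\|$ and $\|\f(\DD_{[t/\overline{\eta}]\overline{\eta}}) - \f(Y_t)\| \le M_0\|\DD_{[t/\overline{\eta}]\overline{\eta}} - Y_t\|$ by $M_0$-Lipschitzness of $\f$, use the analogous bounds for $\sigma$ in Frobenius norm (built into Assumption \ref{as:key} via $\E_{\epsilon}\epsilon_k\epsilon_k^{\mathrm{T}} = I_d$), and apply Young's inequality to the cross terms. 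This collapses everything to a linear differential inequality
\[ \frac{\dd}{\dd t}\E\|e_t\|^2 \le -c_1\,\E\|e_t\|^2 + c_2\,\E\bigl\|\DD_{[t/\overline{\eta}]\overline{\eta}} - Y_t\bigr\|^2, \]
with $c_1 = 2L_1 - \mathcal{O}(M_0 + \eta^{\gamma} M_0^2)$ and $c_2 = \mathcal{O}(M_0 + \eta^{\gamma} M_0^2)$, and the hypothesis $L_1 \ge 2M_0 + 4M_0\eta^{\gamma}$ is precisely what makes the restoring rate dominate, $c_1 \ge c_2 > 0$.

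To conclude, set $\rho(t) := \sup_{t'\le t}\E\|\DD_{[t'/\overline{\eta}]\overline{\eta}} - Y_{t'}\|^2$; this is non-decreasing and dominates $\E\|\DD_{[t/\overline{\eta}]\overline{\eta}} - Y_t\|^2$, so the integrating-factor form of Gr\"onwall's lemma together with $e_0 = 0$ gives $\E\|e_t\|^2 \le \frac{c_2}{c_1}(1 - e^{-c_1 t})\,\rho(t) \le \rho(t)$, which is the claimed bound once combined with $\mathcal{W}_{2}^{2}(\mathcal{L}(Y_t),\mathcal{L}(\ZZ^Y_t)) \le \E\|e_t\|^2$; feeding Lemma \ref{lemma:A3_main} into this estimate then yields Corollary \ref{cor3}. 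I expect the only real obstacle to be the constant bookkeeping in the middle step: the single restoring term $-L_1 e_t$ must simultaneously absorb the expansive contribution $M_0\|e_t\|^2$ from Lipschitzness of $\f$ acting on $e_t$ \emph{and} the expansive contribution $\eta^{\gamma}M_0^2\|e_t\|^2$ from the quadratic variation of the noise difference, while the residual Lipschitz defects of $\f$ and $\sigma$ (which carry the factor $\|\DD_{[t/\overline{\eta}]\overline{\eta}} - Y_t\|$) must be charged with total weight no larger than $c_1$, so that the final coefficient is exactly $1$ rather than some larger constant; tuning the Young split and choosing $L_1$ as in the statement is the delicate part. A minor technical point is that the coefficients of $Y_t$ are frozen at the adapted ladder process $\DD_{[t/\overline{\eta}]\overline{\eta}}$, but since every estimate used above is pathwise this is harmless.
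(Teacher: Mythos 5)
Your proposal is correct and follows essentially the same route as the paper: the paper obtains Corollary \ref{cor3} by chaining Lemma \ref{lemma:A4-A3} (whose appendix proof is exactly your argument --- synchronous coupling, the telescoping $\f(Y_t)+\g^*_t=\f(\DD_{[t/\overline{\eta}]\overline{\eta}})$, It\^o on $\|Q_t\|^2$ with the martingale term vanishing in expectation, Lipschitz/Young estimates, and the choice of $L_1$ making the contraction coefficient dominate so the final prefactor is $1$) with Lemma \ref{lemma:A3_main}. The only cosmetic difference is that you re-derive the intermediate lemma inline rather than citing it, and your constant bookkeeping ($c_1 = 2L_1 - \mathcal{O}(M_0+\eta^{\gamma}M_0^2)$, $c_2 = \mathcal{O}(M_0+\eta^{\gamma}M_0^2)$, Gr\"onwall with the nondecreasing majorant $\rho(t)$) matches the paper's.
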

To complete the proof, it remains to relate two diffusions $Z_t$ and $Z^Y_t$ in terms of $\mathcal{W}_{2}$.

\vspace{-0.2cm}
\subsection{Entropy Bound for Diffusion Approximation}
\vspace{-0.2cm}

With the introduction of the new notation $G^S_{{t}} = \g_{{t}}^{S} - L_{1}(\ZZ^{Y}_{{t}} - Y_{{t}})$, one can rewrite $\ZZ^{Y}_{{t}} $ as follows:
$$\dd\ZZ^{Y}_{{t}} = (\f(\ZZ^{Y}_{{t}}) +  G^S_{{t}})\dd t + \sqrt{\eta^{\gamma}}\sigma(\ZZ^{Y}_{{t}})\dd \WW_{t}.$$
At the moment, the only difference between $Z_t$ and $Z_t^Y$ is the presence of $G^S_{{t}}$ in the drift of $\ZZ^{Y}_{{t}}$. Finally, when the correlation coefficients are equal, we are close to using the Girsanov theorem \citep{Liptser2001}. However, its classical version requires that both diffusions are Markovian. Unfortunately, this is not our case because of $G^s_t$. In particular, $g^s_t$ (part of $G^s_t$) was defined in Section 5 as follows: $\g_{\overline{\eta} k}^{S} = L(\XX_{Sk}-\DD_{\overline{\eta} k})$ for $k \in \mathbb{N}_0$ and $\g_{[t/\overline{\eta}]\overline{\eta}}^{S}$ for all others $t \geq 0$. It turns out that for $t \neq k \overline{\eta}$, $g^S_t$ depends on the nearest reference point $[t/\overline{\eta}]\overline{\eta}$, which immediately violates the Markovian property. The same problem arises in \citep{raginsky2017non}. The way out of this issue is to prove a new version of the Girsanov theorem. The paper \citep{raginsky2017non} considers the dissipative case where additionally $\sigma (\cdot )$ is constant and independent of the current state. We have even more general and complex versions of the Girsanov theorem without dissipativity and for state-dependent $\sigma(\cdot)$.

\begin{theorem}[One-time Girsanov formula for mixed Ito/adapted coefficients] \label{th:girsanov}
Assume that $(G_{{t}})_{t\ge 0}$ is a $(\w_{{t}})_{t\ge 0}$-adapted process with an integratable by $t \geq 0$ second moment. Consider two SDEs ran using two independent Brownian processes:
\begin{align*}
    \dd\ZZ_{{t}} &= \f(\ZZ_{{t}})\dd t + G_{{t}}\dd t + {\sigma(\ZZ_{{t}})}\dd \w_{{t}},\\
    \dd\ZZ^*_{{t}} &= \f(\ZZ^*_{{t}})\dd t \hspace{1.1cm}+ {\sigma(\ZZ^*_{{t}})}\dd \widetilde{\w}_{{t}}.
\end{align*}
Let ${\sigma_{0}} > 0$ be the minimal possible eigenvalue of $\sigma(\x)$. Then we have that for any time horizon $T \ge 0$ the following bound holds:
$$D_{\mathrm{KL}}\big(\LL(\ZZ_{{T}})\big|\big|\LL(\ZZ^*_{{T}})) \le {\sigma_{0}}^{-2} \int_0^{\mathrm{T}} \E{\big{\|}}G_{{t}}{\big{\|}}^2\dd t < \infty.$$
\end{theorem}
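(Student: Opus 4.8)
The plan is a Girsanov change-of-measure argument on the probability space $(\Omega,\mathcal{F},\mathbb{P})$ carrying $\ZZ$ and its driving motion $\w$, followed by the data-processing inequality to descend from the path laws to the time-$T$ marginals. Since $\sigma(\x)\succeq\sigma_{0}I_{d}$, the matrix $\sigma(\x)$ is invertible with $\|\sigma(\x)^{-1}\|\le\sigma_{0}^{-1}$, so the first SDE can be written $\dd\ZZ_{t}=\f(\ZZ_{t})\dd t+\sigma(\ZZ_{t})\big(u_{t}\,\dd t+\dd\w_{t}\big)$ with $u_{t}:=\sigma(\ZZ_{t})^{-1}G_{t}$, a $(\w_{t})$-adapted process satisfying $\int_{0}^{T}\E\|u_{t}\|^{2}\dd t\le\sigma_{0}^{-2}\int_{0}^{T}\E\|G_{t}\|^{2}\dd t<\infty$. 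Introduce $\mathcal{E}_{t}=\exp\!\big(-\!\int_{0}^{t}u_{s}^{\mathrm{T}}\dd\w_{s}-\tfrac12\!\int_{0}^{t}\|u_{s}\|^{2}\dd s\big)$; \emph{assuming for now} that $\mathcal{E}$ is a genuine martingale with $\E\mathcal{E}_{T}=1$, define $\widetilde{\mathbb{P}}$ on $\mathcal{F}_{T}$ by $\dd\widetilde{\mathbb{P}}=\mathcal{E}_{T}\dd\mathbb{P}$.

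By Girsanov's theorem $\widetilde{\w}_{t}:=\w_{t}+\int_{0}^{t}u_{s}\dd s$ is a $\widetilde{\mathbb{P}}$-Brownian motion, so under $\widetilde{\mathbb{P}}$ the process $\ZZ$ solves $\dd\ZZ_{t}=\f(\ZZ_{t})\dd t+\sigma(\ZZ_{t})\dd\widetilde{\w}_{t}$; because $\f$ and $\sigma$ are Lipschitz (Assumption~\ref{as:key}) this SDE has a unique weak solution, so the law of $\ZZ$ under $\widetilde{\mathbb{P}}$ is exactly $\LL(\ZZ^{*})$, and pushing forward by the evaluation map $\omega\mapsto\ZZ_{T}(\omega)$ gives $\LL(\ZZ^{*}_{T})$. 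Applying the data-processing inequality for relative entropy to this push-forward,
$$D_{\mathrm{KL}}\big(\LL(\ZZ_{T})\,\big\|\,\LL(\ZZ^{*}_{T})\big)\ \le\ D_{\mathrm{KL}}\big(\mathbb{P}\,\big\|\,\widetilde{\mathbb{P}}\big)\ =\ \E_{\mathbb{P}}\big[-\log\mathcal{E}_{T}\big]\ =\ \E_{\mathbb{P}}\!\Big[\int_{0}^{T}u_{s}^{\mathrm{T}}\dd\w_{s}+\tfrac12\!\int_{0}^{T}\|u_{s}\|^{2}\dd s\Big].$$
Under $\mathbb{P}$ the process $\w$ is a Brownian motion and $\int_{0}^{\cdot}u_{s}^{\mathrm{T}}\dd\w_{s}$ is an $L^{2}$-martingale (It\^o isometry together with the bound above), hence has zero expectation at $T$; therefore $D_{\mathrm{KL}}(\LL(\ZZ_{T})\|\LL(\ZZ^{*}_{T}))\le\tfrac12\E_{\mathbb{P}}\int_{0}^{T}\|u_{s}\|^{2}\dd s\le\tfrac12\sigma_{0}^{-2}\int_{0}^{T}\E\|G_{s}\|^{2}\dd s\le\sigma_{0}^{-2}\int_{0}^{T}\E\|G_{s}\|^{2}\dd s$, which is the claim, with the right-hand side finite by the standing integrability hypothesis on $G$.

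The only genuinely delicate point — and the main obstacle — is justifying that $\mathcal{E}$ is a \emph{true} martingale, since we assume only $\int_{0}^{T}\E\|G_{t}\|^{2}\dd t<\infty$, which falls short of the exponential integrability demanded by Novikov's or Kazamaki's criterion; a priori $\mathcal{E}$ is merely a nonnegative supermartingale. I would deal with this by localization: with $\tau_{n}=\inf\{t:\int_{0}^{t}\|u_{s}\|^{2}\dd s\ge n\}$, the truncated drift $u_{s}\mathbbm{1}_{\{s\le\tau_{n}\}}$ has bounded quadratic integral, so Novikov applies to the corresponding stopped exponential and the change of measure is legitimate on $\mathcal{F}_{\tau_{n}\wedge T}$; since $\int_{0}^{T}\|u_{s}\|^{2}\dd s<\infty$ $\mathbb{P}$-almost surely (its expectation is finite), one has $\tau_{n}\wedge T\uparrow T$, and monotone convergence of relative entropy along the increasing family $(\mathcal{F}_{\tau_{n}\wedge T})$ transfers the bound to $\mathcal{F}_{T}$, hence to $\LL(\ZZ_{T})$. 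Alternatively, one may invoke the absolute-continuity criterion for It\^o processes in \citep{Liptser2001} directly, whose hypothesis is exactly the $\mathbb{P}$-almost-sure finiteness of $\int_{0}^{T}\|\sigma(\ZZ_{s})^{-1}G_{s}\|^{2}\dd s$, and read off the density $\dd\LL(\ZZ)/\dd\LL(\ZZ^{*})$. Finally, note that $G$ being only $(\w_{t})$-adapted rather than a feedback functional of $\ZZ$ costs nothing here: both Girsanov's theorem and the localization step use only adaptedness of the drift, which is precisely why the ``mixed It\^o/adapted'' structure of the statement poses no additional difficulty.
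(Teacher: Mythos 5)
Your route is genuinely different from the paper's: you tilt the underlying measure $\mathbb{P}$ on the driving filtration $\mathcal{F}^{W}_{T}$ by the stochastic exponential and then push the entropy bound down to the time-$T$ marginals by the data-processing inequality, whereas the paper first Markovianizes $Z$ (twice) via Gy\"ongy's mimicking theorem, truncates the projected drift so that Novikov holds, applies the classical Girsanov theorem to the resulting bona fide diffusions, and passes to the limit with a dedicated lower-semicontinuity lemma for KL under weak convergence (Lemma~\ref{lemma:KL-subseq}). If your change of measure were legitimate, your computation would even yield the sharper constant $\tfrac12\sigma_{0}^{-2}$, and the DPI from $\mathcal{F}^{W}_{T}$ down to $\sigma(Z_{T})$ correctly absorbs the conditioning/Jensen step that the paper carries out explicitly through Gy\"ongy's theorem. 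So the skeleton is sound and arguably cleaner.

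The gap is precisely in the step you flag as delicate, and your proposed repair does not close it. Stopping at $\tau_{n}$ legitimizes the change of measure only up to $\tau_{n}\wedge T$: under the tilted measure $\widetilde{\mathbb{P}}^{n}$ built from the truncated drift $u_{s}\mathbf{1}_{\{s\le\tau_{n}\}}$, the process $Z$ solves the driftless SDE only on $[0,\tau_{n}]$, and the drift $G_{t}$ reappears on $(\tau_{n},T]$. Hence the push-forward of $\widetilde{\mathbb{P}}^{n}$ under $\omega\mapsto Z_{T}(\omega)$ is \emph{not} $\LL(Z^{*}_{T})$, so the DPI step is unavailable for each fixed $n$; and ``monotone convergence of relative entropy along $(\mathcal{F}_{\tau_{n}\wedge T})$'' presupposes a limiting probability measure on $\mathcal{F}_{T}$ whose restrictions are the $\widetilde{\mathbb{P}}^{n}$ --- the existence of such an absolutely continuous extension is equivalent to $\mathcal{E}$ being a uniformly integrable martingale, which is exactly the point at issue (when $\E\mathcal{E}_{T}<1$ the consistent family loses mass at the stopping times and does not extend). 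The correct fix, which is what the paper does, is to truncate the \emph{drift of the process} rather than the horizon of the change of measure: replace $G_{t}$ by $G_{t}\mathbf{1}_{\{t\le\tau_{n}\}}$ in the SDE for $Z$ itself, obtaining honest diffusions $Z^{n}$ on all of $[0,T]$ for which Novikov holds and $D_{\mathrm{KL}}(\LL(Z^{n}_{T})\,\|\,\LL(Z^{*}_{T}))$ is bounded uniformly in $n$ by your formula; since $Z^{n}_{T}\to Z_{T}$ almost surely, one concludes by lower semicontinuity of KL along weakly convergent sequences with uniformly bounded entropy (the content of Lemma~\ref{lemma:KL-subseq}). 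Your alternative of quoting the Liptser--Shiryaev absolute-continuity criterion could also be made to work, but note that for a drift that is only $W$-adapted the density of $\LL(Z)$ with respect to $\LL(Z^{*})$ on path space involves the projected drift $\E(G_{t}\,|\,\mathcal{F}^{Z}_{t})$, so a Jensen step is still required there; as written, the proof is incomplete without one of these limiting devices.
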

The result of Theorem \ref{th:girsanov} is worse than the classical version, which is predictably the price of generalization. This theorem can be considered as a stand-alone contribution of the paper. Let us apply it to our case with $Z_t$ and $Z^Y_t$.
\begin{corollary} \label{cor4}
Let Assumption \ref{as:key} holds. Then for any time horizon $t =  \eta k \geq 0$:
$$ D_{\mathrm{KL}}\big(\LL(\ZZ^{Y}_{{k\eta}})\big|\big|\LL(\ZZ_{{k\eta}})) = \mathcal{O}\left(\textcolor{red}{\eta^{-\gamma}} k \eta e^{\mathcal{O}(k \eta)} \eta^{{2\theta}}\right),$$
where $\theta = \min \left\{\alpha ; \frac{(\gamma+1)(1+\chi_{0})+(\gamma+\beta)(1-\chi_{0})}{4} \right\}$.
\end{corollary}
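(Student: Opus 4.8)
The plan is to deduce Corollary~\ref{cor4} from Theorem~\ref{th:girsanov} after matching coefficients, and then to bound the resulting drift integral by a short combination of Corollaries~\ref{cor1} and~\ref{cor3}.

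First I would apply Theorem~\ref{th:girsanov} to the pair $(\ZZ^{Y}_{t}, \ZZ_{t})$ with the identifications $\sigma(\cdot)\leftarrow \sqrt{\eta^{\gamma}}\,\sigma(\cdot)$, drift $\f(\cdot)$ unchanged, and adapted perturbation $G_{t}\leftarrow G^{S}_{t}$. Since $\sigma(\x)\ge \sigma_{0}I_{d}$, the rescaled diffusion coefficient obeys $\sqrt{\eta^{\gamma}}\,\sigma(\x)\ge \sqrt{\eta^{\gamma}}\,\sigma_{0}I_{d}$, so the factor $\sigma_{0}^{-2}$ in the theorem becomes $\eta^{-\gamma}\sigma_{0}^{-2}$ — this is precisely the $\textcolor{red}{\eta^{-\gamma}}$ in the statement. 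With these substitutions the two SDEs of the theorem coincide with $\dd\ZZ^{Y}_{t}=(\f(\ZZ^{Y}_{t})+G^{S}_{t})\dd t+\sqrt{\eta^{\gamma}}\sigma(\ZZ^{Y}_{t})\dd\WW_{t}$ and $\dd\ZZ_{t}=\f(\ZZ_{t})\dd t+\sqrt{\eta^{\gamma}}\sigma(\ZZ_{t})\dd\widetilde{\WW}_{t}$, so $\ZZ^{Y}$ is the perturbed diffusion and $\ZZ$ the reference one, which is the order of arguments in the claimed $D_{\mathrm{KL}}$. Before invoking the theorem I must check its hypotheses on $G^{S}_{t}=\g^{S}_{t}-L_{1}(\ZZ^{Y}_{t}-Y_{t})$: adaptedness holds because on $t\in[k\overline{\eta},(k+1)\overline{\eta})$ one has $\g^{S}_{t}=L(\XX_{Sk}-\DD_{\overline{\eta}k})$, and $\XX_{Sk},\DD_{\overline{\eta}k},Y_{t},\ZZ^{Y}_{t}$ are all built on one probability space out of the Brownian increments up to time $t$; integrability of $t\mapsto\mathbb{E}\|G^{S}_{t}\|^{2}$ on $[0,T]$ follows from the quantitative bounds below together with the a priori finite-horizon growth estimate $R^{2}(t)=\mathcal{O}(e^{\mathcal{O}(t)})$ for $\max_{\eta k\le t}\mathbb{E}\|\XX_{k}\|^{2}$ (a standard Gronwall consequence of the recursion for $\mathbb{E}\|\XX_{k}\|^2$). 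Theorem~\ref{th:girsanov} then gives $D_{\mathrm{KL}}(\LL(\ZZ^{Y}_{k\eta})\|\LL(\ZZ_{k\eta}))\le \eta^{-\gamma}\sigma_{0}^{-2}\int_{0}^{k\eta}\mathbb{E}\|G^{S}_{t}\|^{2}\dd t$.

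Next I would control the integrand via $\mathbb{E}\|G^{S}_{t}\|^{2}\le 2\mathbb{E}\|\g^{S}_{t}\|^{2}+2L_{1}^{2}\,\mathbb{E}\|\ZZ^{Y}_{t}-Y_{t}\|^{2}$. For the first term, $\g^{S}_{t}=L(\XX_{S[t/\overline{\eta}]}-\DD_{[t/\overline{\eta}]\overline{\eta}})$, hence $\mathbb{E}\|\g^{S}_{t}\|^{2}=L^{2}\,\mathbb{E}\|\XX_{S[t/\overline{\eta}]}-\DD_{[t/\overline{\eta}]\overline{\eta}}\|^{2}\le L^{2}\,\Delta^{S}_{k\eta}\,R^{2}(k\eta)$ for all $t\le k\eta$, and with the choice $S=\eta^{-(1-\beta)/2}(1-\chi_{0})+\chi_{0}$ Corollary~\ref{cor1} yields $\mathbb{E}\|\g^{S}_{t}\|^{2}=\mathcal{O}(\eta^{2\theta}R^{2}(k\eta))$. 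For the second term, Lemma~\ref{lemma:A4-A3} (whose proof in fact bounds $\mathbb{E}\|\ZZ^{Y}_{t}-Y_{t}\|^{2}$ in the natural coupling, both processes being driven by the same $\WW$) together with Lemma~\ref{lemma:A3_main} gives $\mathbb{E}\|\ZZ^{Y}_{t}-Y_{t}\|^{2}\le \sup_{t'\le t}\mathbb{E}\|\DD_{[t'/\overline{\eta}]\overline{\eta}}-Y_{t'}\|^{2}=\mathcal{O}(\overline{\eta}^{1+\gamma}R^{2}(k\eta))$; since $\overline{\eta}^{1+\gamma}=\eta^{(1+\beta)(1+\gamma)/2}$ when $\chi_{0}=0$ (and $=\eta^{1+\gamma}$ when $\chi_{0}=1$) has exponent at least $2\theta$, this term is absorbed into the first. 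Thus $\sup_{t\le k\eta}\mathbb{E}\|G^{S}_{t}\|^{2}=\mathcal{O}(\eta^{2\theta}R^{2}(k\eta))$, so $\int_{0}^{k\eta}\mathbb{E}\|G^{S}_{t}\|^{2}\dd t\le k\eta\cdot\mathcal{O}(\eta^{2\theta}R^{2}(k\eta))$; substituting $R^{2}(k\eta)=\mathcal{O}(e^{\mathcal{O}(k\eta)})$ and multiplying by $\eta^{-\gamma}\sigma_{0}^{-2}$ delivers $D_{\mathrm{KL}}(\LL(\ZZ^{Y}_{k\eta})\|\LL(\ZZ_{k\eta}))=\mathcal{O}(\eta^{-\gamma}k\eta\,e^{\mathcal{O}(k\eta)}\eta^{2\theta})$, as claimed.

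I expect the only genuine obstacle to be the rigorous verification that Theorem~\ref{th:girsanov} applies: one must set up a single probability space carrying $\WW$, the noise $\epsilon$, the optimal couplings $\zeta^{S}$, and hence $\XX$, $\DD$, $Y$, $\ZZ^{Y}$ simultaneously; confirm that $G^{S}_{t}$ is genuinely $\WW$-adapted (its piecewise-constant-in-$t$ dependence on the reference point $[t/\overline{\eta}]\overline{\eta}$ is exactly what breaks the Markov property of $\ZZ^{Y}$ and forces the use of the generalized formula of Theorem~\ref{th:girsanov} rather than the classical Girsanov theorem); and check the finite second-moment/integrability requirement through the finite-horizon bound on $R^{2}(t)$. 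Everything downstream of that reduces to bookkeeping with Corollaries~\ref{cor1} and~\ref{cor3}.
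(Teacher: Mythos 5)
Your proposal follows essentially the same route as the paper's own proof: apply the generalized Girsanov bound of Theorem~\ref{th:girsanov} with the rescaled diffusion coefficient $\sqrt{\eta^{\gamma}}\sigma$ (whence the $\eta^{-\gamma}\sigma_0^{-2}$ factor), split $\mathbb{E}\|G^S_t\|^2$ into the $L^2\mathbb{E}\|\XX_{Sk}-\DD_{\overline{\eta}k}\|^2$ and $L_1^2\mathbb{E}\|\ZZ^Y_t-Y_t\|^2$ pieces, control them via Lemma~\ref{lemma:A2_main}/Corollary~\ref{cor1} and Lemma~\ref{lemma:A4-A3} combined with Lemma~\ref{lemma:A3_main} respectively, and integrate over $[0,k\eta]$. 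The only cosmetic difference is that you absorb the $\overline{\eta}^{1+\gamma}$ term by a direct exponent comparison with $2\theta$, whereas the paper uses the cruder but safer observation $\overline{\eta}^{1+\gamma}\le\eta^{2\alpha}+\tfrac{\eta^{\gamma+\beta}}{S}(1-\chi_0)+\overline{\eta}$ followed by Corollary~\ref{cor1}; both steps rest on the same content, and your explicit checks of adaptedness and integrability of $G^S_t$ are a welcome addition the paper leaves implicit.
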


\vspace{-0.2cm}
\subsection{Exponential Integrability}
\vspace{-0.2cm}

At the moment, our modification of the Girsanov theorem gives the estimates on the relation between $Z_{k \eta}$ and $Z^{Y}_{k \eta}$ only in terms of $\mathrm{KL}$-divergence. Meanwhile, our ultimate goal is to bound the $\mathcal{W}_{2}$-distance.  Therefore, we need to connect the estimates for $\mathrm{KL}$-divergence and $\mathcal{W}_{2}$-distance. Let us use the auxiliary result from  \citep{Bolley2005WeightedCI} to solve this issue. In more details, for two distributions $p_1$ and $p_2$, we have 
\begin{equation}
    \label{eq:con_w2_kl}
    \mathcal{W}_2(p_1, p_2) \le \mathcal{C}_{\mathcal{W}}(p_{2}) \left( \Big(D_{\mathrm{KL}}\big(p_1\big|\big|p_2\big)\Big)^{\frac{1}{2}} + \Big(\frac{ D_{\mathrm{KL}}\big(p_1\big|\big|p_2\big)}{2}\big)^{\frac{1}{4}}\right),
\end{equation}
where $\mathcal{C}_{\mathcal{W}}\big(p_2\big) > 0$ relates entropy $D_{\mathrm{KL}}\big(\cdot\big|\big|p_2\big)$ with $\mathcal{W}_{2}\big(\cdot, p_2\big)$. 
The main challenge here is to find the bound on the constant $\mathcal{C}_{\mathcal{W}}(p_2)$ with $p_2 = \mathcal{L}\big(\ZZ_{k \eta}\big)$ for all $k \in \mathbb{N}$ or a slightly more general result with $p_2 = \mathcal{L}\big(\ZZ_{t}\big)$ for all $t \geq 0$. Related results are available in the literature \citep{raginsky2017non}, but were obtained in special cases: under dissipative conditions and with specific $\gamma$ equal to $0$.
The challenge of finding $\mathcal{C}^2_{\mathcal{W}}$ in the non-dissipative case for arbitrary $\gamma$ is solved in the next theorem. 
 \begin{lemma}\label{th_exp_intg} 
Let Assumption \ref{as:key} holds. Then for any time horizon $t\ge 0$:
$$\mathcal{C}_{\mathcal{W}}\big(\mathcal{L}\big(\ZZ_{t}\big) \big) = \mathcal{O}\left(
\textcolor{red}{{\eta}^{\frac{\gamma}{2}}} e^{\mathcal{O}(t)}\right).$$
% $$\mathcal{E}_{{t}} $$
\end{lemma}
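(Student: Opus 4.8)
The plan is to reduce the statement to a \emph{centred} exponential‑moment bound for the target diffusion $\ZZ_{t}$ and then optimise over the free parameter in the Bolley--Villani constant. By \citep{Bolley2005WeightedCI}, for \emph{any} fixed reference point $x_{*}\in\R^{d}$ one has
$$\mathcal{C}_{\mathcal{W}}\big(\mathcal{L}(\ZZ_{t})\big)\ =\ \mathcal{O}\!\left(\inf_{\lambda>0}\left(\frac{1}{\lambda}\Big(\frac{3}{2}+\log\E\, e^{\lambda\|\ZZ_{t}-x_{*}\|^{2}}\Big)\right)^{\!1/2}\right).$$
I would take $x_{*}=\bar{z}_{t}$, the value at time $t$ of the \emph{noiseless} flow $\dot{\bar{z}}_{s}=\f(\bar{z}_{s})$, $\bar{z}_{0}=x_{0}$ (which exists globally since $\f$ is $M_{0}$‑Lipschitz). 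This is the whole point: the process $U_{t}:=\ZZ_{t}-\bar{z}_{t}$ starts at $U_{0}=0$ and its fluctuations come solely from the $\sqrt{\eta^{\gamma}}$‑scaled noise, so its typical size is $\mathcal{O}(\sqrt{\eta^{\gamma}}\,e^{\mathcal{O}(t)})$; centring at the origin instead would leave an irreducible $e^{\lambda\|x_{0}\|^{2}}$ contribution and destroy the $\eta^{\gamma/2}$ gain (centring at $\E[\ZZ_{t}]$ would work too, at the cost of an extra Jensen‑gap term of the same order $\eta^{\gamma}$).

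Next I would write $\dd U_{t}=\big(\f(\ZZ_{t})-\f(\bar{z}_{t})\big)\dd t+\sqrt{\eta^{\gamma}}\,\sigma(\ZZ_{t})\dd\WW_{t}$ and apply Itô to $\|U_{t}\|^{2}$. Cauchy--Schwarz with $M_{0}$‑Lipschitzness of $\f$ gives $\langle\f(\ZZ_{t})-\f(\bar{z}_{t}),U_{t}\rangle\le M_{0}\|U_{t}\|^{2}$ (note: no one‑sided monotonicity/dissipativity of $\f$ is used), and $\sigma_{1}I_{d}\ge\sigma(\cdot)$ bounds the second‑order term by $\eta^{\gamma}d\sigma_{1}^{2}$, so
$$\dd\|U_{t}\|^{2}\ \le\ \big(2M_{0}\|U_{t}\|^{2}+\eta^{\gamma}d\sigma_{1}^{2}\big)\dd t+\dd N_{t},\qquad \dd\langle N\rangle_{t}\le 4\eta^{\gamma}\sigma_{1}^{2}\|U_{t}\|^{2}\dd t,$$
with $N_{t}$ a local martingale. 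The only additive term is of order $\eta^{\gamma}$, and $\|U_{0}\|^{2}=0$.

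Then I would run the time‑dependent‑weight device for exponential moments. Put $V_{t}=e^{\psi(t)\|U_{t}\|^{2}}$ with a positive, decreasing weight $\psi$ to be chosen; Itô's formula, after discarding the local‑martingale part (rigorously: stop at $\tau_{N}=\inf\{s:\|U_{s}\|\ge N\}$, use that $\f,\sigma$ Lipschitz preclude explosion, then let $N\to\infty$ by Fatou), gives
$$\frac{\dd}{\dd t}\E V_{t}\ \le\ \E\Big[V_{t}\Big(\psi(t)\eta^{\gamma}d\sigma_{1}^{2}+\big(\dot\psi(t)+2M_{0}\psi(t)+2\eta^{\gamma}\sigma_{1}^{2}\psi(t)^{2}\big)\|U_{t}\|^{2}\Big)\Big].$$
Choosing $\psi$ to solve the Bernoulli equation $\dot\psi=-2M_{0}\psi-2\eta^{\gamma}\sigma_{1}^{2}\psi^{2}$ annihilates the $\|U_{t}\|^{2}$‑term and leaves $\tfrac{\dd}{\dd t}\E V_{t}\le\eta^{\gamma}d\sigma_{1}^{2}\,\psi(t)\,\E V_{t}$, hence $\E\, e^{\psi(t)\|U_{t}\|^{2}}\le\exp\big(\eta^{\gamma}d\sigma_{1}^{2}\int_{0}^{t}\psi(s)\dd s\big)$ because $V_{0}=1$. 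The substitution $u=1/\psi$ turns the Bernoulli equation into the affine equation $\dot u=2M_{0}u+2\eta^{\gamma}\sigma_{1}^{2}$; with initial value $\psi(0)=M_{0}/(\eta^{\gamma}\sigma_{1}^{2})$ one gets $\psi(t)=\Theta\big(\eta^{-\gamma}e^{-\mathcal{O}(t)}\big)$ and $\int_{0}^{t}\psi\le(\log 2)/(2\eta^{\gamma}\sigma_{1}^{2})$, so $\log\E\, e^{\psi(t)\|U_{t}\|^{2}}=\mathcal{O}(d)$ (the degenerate case $M_{0}=0$ uses $\psi(0)=1/(2\eta^{\gamma}\sigma_{1}^{2}t)$ and is identical). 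Taking $\lambda=\psi(t)$ in the displayed bound gives $\tfrac{1}{\lambda}\big(\tfrac32+\log\E e^{\lambda\|U_{t}\|^{2}}\big)=\mathcal{O}(\eta^{\gamma}e^{\mathcal{O}(t)})$, and a square root yields $\mathcal{C}_{\mathcal{W}}(\mathcal{L}(\ZZ_{t}))=\mathcal{O}(\eta^{\gamma/2}e^{\mathcal{O}(t)})$.

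I expect the crux to be this third step. The differential inequality for $\E V_{t}$ does \emph{not} close for a constant weight — the term $\|U_{t}\|^{2}e^{\psi\|U_{t}\|^{2}}$ cannot be reabsorbed at the same exponent — which forces the time‑dependent‑weight / Bernoulli‑ODE construction, and it is precisely because the largest admissible weight can be taken of size $\psi(t)\sim\eta^{-\gamma}$, together with the fact that centring at $\bar{z}_{t}$ makes $V_{0}=1$ rather than $e^{\psi(0)\|x_{0}\|^{2}}$, that the $\eta^{\gamma/2}$ emerges. Because the drift is non‑dissipative, the linear coefficient $2M_{0}$ in the Bernoulli equation is positive (rather than negative, as in the dissipative analysis of \citep{raginsky2017non}), so $\psi(t)$ decays and $1/\psi(t)$ grows like $e^{\mathcal{O}(t)}$ — hence the unavoidable time‑exponential factor. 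A minor technical nuisance is the a priori finiteness of the exponential moments, handled by the localisation‑and‑Fatou argument above.
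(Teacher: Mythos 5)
Your proposal is correct and follows essentially the same route as the paper: centre at the noiseless flow $\dot{\bar z}_s=b(\bar z_s)$, $\bar z_0=x_0$, take a time-dependent weight of size $\Theta(\eta^{-\gamma})$ that decays exponentially in $t$, apply It\^o to the exponential functional, and choose the weight so that the quadratic-in-$\|U_t\|^2$ terms are annihilated, leaving only an $\mathcal{O}(\eta^{\gamma})$ additive contribution that integrates to an $\mathcal{O}(d)$ log-moment. The only cosmetic difference is that you solve the Bernoulli ODE for the weight exactly, whereas the paper uses the ansatz $a(t)=a(0)e^{-2mt}$ with $m\ge M_{0}(1+\varepsilon)$ chosen to dominate the quadratic term and then invokes the Bihari--LaSalle (Gronwall) inequality.
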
 
The presence of factor ${\eta}^{\frac{\gamma}{2}}$ for $\gamma \neq 0$ is not only novel but also crucial. In the previous section, we already encountered this ${\eta}^{-\gamma}$. In the final estimate, these two factors will cancel each other out.

\vspace{-0.2cm}
\subsection{Final Result}
\vspace{-0.2cm}

It remains to combine all the results obtained above. 
In particular, we require Corollaries \ref{cor1}, \ref{cor2}, \ref{cor3} and \ref{cor4}, as well as Lemma \ref{th_exp_intg}. It is important to note that in Corollary \ref{cor1}, $S =  \eta^{-\frac{1-\beta}{2}}(1 - \chi_0) + \chi_0 \ge 1$ has already been chosen, it needs to be substituted to other results.
\begin{theorem}\label{th:main}
Let Assumption \ref{as:key} hold. Then for all $k \in \mathbb{N}_0$:
% $$C_{4} = \sqrt{\frac{2d \sigma_{1}^2}{M_{0}}}$$
% $$R^2_{\XX}({t}) \le C_{5} R_{0}(t) (1\vee t)$$
% $$\theta_{1} = \frac{\theta}{2} > \theta_{2} = \frac{\theta}{4}+\frac{\gamma}{4}$$

$$\mathcal{W}_{2}\big(\mathcal{L}(\XX_{k'}),\LL(\ZZ_{{k'\eta}})) = \mathcal{O}\Big(\big(1+(k'\eta)^{\frac{1}{2}}\big)e^{\mathcal{O}(k'\eta)}{\eta}^{{\theta}} +(k'\eta)^{\frac{1}{4}}e^{\mathcal{O}(k'\eta)}\eta^{\frac{{\theta}}{2} + \frac{\gamma}{4}}\Big),$$
% More detailed versions with an explicit derivation of the constants can be found in Appendix \ref{app:theorem-3}.

where $\theta = \min \left\{\alpha ; \frac{(\gamma+1)(1+\chi_{0})+(\gamma+\beta)(1-\chi_{0})}{4} \right\}$.
\end{theorem}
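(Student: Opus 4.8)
The plan is a telescoping triangle inequality along the chain of auxiliary objects already built in the paper,
$$\XX_{k'} \rightsquigarrow \DD_{\overline{\eta}\lfloor k'/S\rfloor} \rightsquigarrow Y_{k'\eta} \rightsquigarrow \ZZ^{Y}_{k'\eta} \rightsquigarrow \ZZ_{k'\eta},$$
feeding each link with the matching result proved above. Concretely, fix $k'\in\mathbb{N}_0$, set $t=k'\eta$ and $m=\lfloor k'/S\rfloor$, and bound
$$\mathcal{W}_{2}(\mathcal{L}(\XX_{k'}),\mathcal{L}(\ZZ_{t})) \le \mathcal{W}_{2}(\mathcal{L}(\XX_{k'}),\mathcal{L}(\XX_{Sm})) + \mathcal{W}_{2}(\mathcal{L}(\XX_{Sm}),\mathcal{L}(\DD_{\overline{\eta}m})) + \mathcal{W}_{2}(\mathcal{L}(\DD_{\overline{\eta}m}),\mathcal{L}(Y_{t})) + \mathcal{W}_{2}(\mathcal{L}(Y_{t}),\mathcal{L}(\ZZ^{Y}_{t})) + \mathcal{W}_{2}(\mathcal{L}(\ZZ^{Y}_{t}),\mathcal{L}(\ZZ_{t})).$$
The first term (reduction of a general index to a multiple of $S$) is handled by a routine a priori estimate $\mathbb{E}\|\XX_{Sm+r}-\XX_{Sm}\|^2 = \mathcal{O}\big((\overline{\eta}\,\eta^{\gamma}+\overline{\eta}^{\,2})R^2(t)\big)$ for $0\le r<S$, obtained by iterating \eqref{eq:main_chain} over the window and using the $M$-growth consequence of Assumption \ref{as:key}; with the chosen $S$ this is of order $\eta^{2\theta}R^2(t)$, so it is absorbed. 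Terms two, three and four are exactly the content of Corollaries \ref{cor1}, \ref{cor2}, \ref{cor3}, and term five is Corollary \ref{cor4} combined with Lemma \ref{th_exp_intg}.

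Hence the first thing I would actually establish is the worst-case second-moment bound $R^2(t)$ that all those corollaries carry along: iterating \eqref{eq:main_chain}, Assumption \ref{as:key} gives $\mathbb{E}\|\XX_{k+1}\|^2 \le (1+\mathcal{O}(\eta))\,\mathbb{E}\|\XX_k\|^2 + \mathcal{O}(\eta)$, so by discrete Gr\"onwall $R^2(t) = \mathcal{O}(R^2_{0}\,e^{\mathcal{O}(t)})$. Plugging this and the value $S=\eta^{-(1-\beta)/2}(1-\chi_0)+\chi_0$ of Corollary \ref{cor1} into Corollaries \ref{cor1}--\ref{cor3}: the $\XX\rightsquigarrow\DD$ term is $\mathcal{O}(\eta^{\theta}e^{\mathcal{O}(t)})$, while the two window-coupling interpolation terms $\DD\rightsquigarrow Y$ and $Y\rightsquigarrow\ZZ^{Y}$ are each $\mathcal{O}(\overline{\eta}^{(1+\gamma)/2}e^{\mathcal{O}(t)})=\mathcal{O}((S\eta)^{(1+\gamma)/2}e^{\mathcal{O}(t)})$. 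The role of the particular $S$ is precisely that $(S\eta)^{(1+\gamma)/2}$ is no larger than $\eta^{\theta}$, so these three contributions merge into $\mathcal{O}(e^{\mathcal{O}(t)}\eta^{\theta})$.

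For the last summand I would combine Corollary \ref{cor4}, Lemma \ref{th_exp_intg} and the weighted transport--entropy inequality \eqref{eq:con_w2_kl} with $p_2=\mathcal{L}(\ZZ_{t})$. Writing $D := D_{\mathrm{KL}}(\mathcal{L}(\ZZ^{Y}_{t})\,\|\,\mathcal{L}(\ZZ_{t})) = \mathcal{O}(\eta^{-\gamma}\,t\,e^{\mathcal{O}(t)}\,\eta^{2\theta})$ and $\mathcal{C}_{\mathcal{W}}(\mathcal{L}(\ZZ_{t})) = \mathcal{O}(\eta^{\gamma/2}e^{\mathcal{O}(t)})$, inequality \eqref{eq:con_w2_kl} gives $\mathcal{W}_{2}(\mathcal{L}(\ZZ^{Y}_{t}),\mathcal{L}(\ZZ_{t})) = \mathcal{O}(\eta^{\gamma/2}e^{\mathcal{O}(t)})\big(D^{1/2}+D^{1/4}\big)$. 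Here the $\eta^{\gamma/2}$ from the transport constant cancels the $\eta^{-\gamma/2}$ coming out of $D^{1/2}$ and converts the $\eta^{-\gamma/4}$ from $D^{1/4}$ into $\eta^{\gamma/4}$, leaving $\mathcal{O}\big(t^{1/2}e^{\mathcal{O}(t)}\eta^{\theta} + t^{1/4}e^{\mathcal{O}(t)}\eta^{\theta/2+\gamma/4}\big)$. Summing the five contributions and substituting $t=k'\eta$ yields the stated bound.

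I expect the real difficulty to be concentrated not in any single estimate but in making all the $\eta$-powers collapse \emph{exactly} to $\eta^{\theta}$ and $\eta^{\theta/2+\gamma/4}$: the $\eta^{-\gamma}/\eta^{\gamma/2}$ cancellation between the Girsanov bound (Corollary \ref{cor4}) and the exponential-integrability constant (Lemma \ref{th_exp_intg}) — the point flagged in red in the text — and the check that the window-coupling interpolation error $(S\eta)^{(1+\gamma)/2}$ is dominated by $\eta^{\theta}$ for the chosen $S$ across all regimes of $(\gamma,\beta,\chi_0)$ from Table \ref{tab:comparison2}. The supporting ingredients (the $R^2(t)$ growth lemma, the reduction of a general index $k'$ to a multiple of $S$, and the finiteness of the exponential moments underlying $\mathcal{C}_{\mathcal{W}}$) are routine, but must be in place so that the hidden $\mathcal{O}(\cdot)$-constants genuinely do not depend on $\eta$, $k$ or $t$.
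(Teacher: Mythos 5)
Your proposal is correct and follows essentially the same route as the paper's own proof: the identical five-term triangle-inequality decomposition through $\XX_{Sk}$, $\DD_{\overline{\eta}k}$, $Y_{t}$ and $\ZZ^{Y}_{t}$, the same invocation of Lemma~\ref{lem:8} and Corollaries~\ref{cor1}--\ref{cor4} for each link, and the same cancellation of the $\eta^{-\gamma}$ factor from the Girsanov bound against the $\eta^{\gamma/2}$ factor in $\mathcal{C}_{\mathcal{W}}$ via the transport--entropy inequality \eqref{eq:con_w2_kl}. The points you flag as the real difficulties are exactly where the paper's proof does its bookkeeping, so nothing is missing.
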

All (exponentially) growing factors in Theorem \ref{th:main} depend only on horizon $T \ge k\eta$, which is assumed to be fixed a priory. Thus, if we consider convergence on the interval $t\in[0, T]$, then those factors are essentially constants, independent from $\eta^{-1}$. Though, compared to the work on SGLD \citep{raginsky2017non} that relies on dissipativity assumption, those constants grow exponentially, which limits the applicability of the results for such problems as sampling from the invariant measure. Since our results do not rely on dissipativity/convexity assumptions, having exponential dependence on the horizon is unavoidable in the general case \citep{Alfonsi2014OptimalTB}. Putting the horizon $T$ fixed, we obtain that $\mathcal{W}_{2}\big(\mathcal{L}(\XX_{k}),\LL(\ZZ_{{k\eta}})) = \mathcal{O}\left({\eta}^{{\theta}} +\eta^{\frac{{\theta}}{2} + \frac{\gamma}{4}}\right)$. From this, one can find estimates for the different cases from Table \ref{tab:comparison2}. As an example:
\begin{corollary}[SGD with Gaussian noise]
Under the conditions of Theorem \ref{th:main}, if $\chi_0 = 1$, $\gamma=1$, $\alpha \ge 1$, then $\theta = 1$ and $\mathcal{W}_{2}\big(\mathcal{L}(\XX_{k}),\LL(\ZZ_{{k\eta}})) = \mathcal{O}\left({\eta}^{{\frac{3}{4}}}\right)$.
\end{corollary}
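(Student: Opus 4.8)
The plan is to obtain the corollary as a direct arithmetic specialization of Theorem \ref{th:main}. Because the time horizon $T \ge k\eta$ is held fixed, the prefactors $\big(1+(k\eta)^{1/2}\big)e^{\mathcal{O}(k\eta)}$ and $(k\eta)^{1/4}e^{\mathcal{O}(k\eta)}$ appearing in Theorem \ref{th:main} are bounded by constants that do not depend on $\eta$. Hence Theorem \ref{th:main} collapses to the fixed-horizon form already recorded in the paragraph preceding the corollary, namely
$$
\mathcal{W}_{2}\big(\mathcal{L}(\XX_{k}),\LL(\ZZ_{{k\eta}})) = \mathcal{O}\left(\eta^{\theta} + \eta^{\frac{\theta}{2}+\frac{\gamma}{4}}\right).
$$
The whole task is then to evaluate $\theta$ under the hypotheses $\chi_0 = 1$, $\gamma = 1$, $\alpha \ge 1$ and to compare the two resulting exponents of $\eta$.

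First I would compute $\theta$. Setting $\chi_0 = 1$ annihilates the $(1-\chi_0)$ contribution inside the minimum, so that
$$
\frac{(\gamma+1)(1+\chi_0)+(\gamma+\beta)(1-\chi_0)}{4} = \frac{2(\gamma+1)}{4} = \frac{\gamma+1}{2}.
$$
Substituting $\gamma = 1$ gives $\frac{\gamma+1}{2} = 1$, hence $\theta = \min\{\alpha, 1\}$. The assumption $\alpha \ge 1$ then yields $\theta = 1$, which is the first assertion of the corollary.

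Next I would substitute $\theta = 1$ and $\gamma = 1$ into the reduced bound. The first exponent is $\theta = 1$ and the second is $\frac{\theta}{2}+\frac{\gamma}{4} = \frac{1}{2}+\frac{1}{4} = \frac{3}{4}$, so the estimate becomes $\mathcal{O}\big(\eta^{1} + \eta^{3/4}\big)$. Finally, since $\eta \in (0,1]$ and $3/4 < 1$, one has $\eta \le \eta^{3/4}$, so the slower-decaying term dominates and $\eta^{1}+\eta^{3/4} = \mathcal{O}(\eta^{3/4})$, giving the claimed rate.

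There is no genuine obstacle here: the corollary is purely a numerical specialization of Theorem \ref{th:main}, with no new analytic content. The only point demanding a moment's attention is the closing dominance step, where one must recall that on the interval $(0,1]$ a \emph{smaller} exponent yields a \emph{larger}, more slowly vanishing power; consequently it is $\eta^{3/4}$, not $\eta^{1}$, that governs the final rate.
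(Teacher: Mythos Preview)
Your proposal is correct and matches the paper's own approach: the paper does not give a separate proof of this corollary but simply remarks that, with the horizon fixed, Theorem~\ref{th:main} reduces to $\mathcal{W}_{2}\big(\mathcal{L}(\XX_{k}),\LL(\ZZ_{{k\eta}})\big) = \mathcal{O}\big(\eta^{\theta} + \eta^{\theta/2 + \gamma/4}\big)$ and then substitutes the parameter values. Your explicit computation of $\theta = 1$ and the dominance argument $\eta \le \eta^{3/4}$ on $(0,1]$ is exactly the intended specialization.
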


\vspace{-0.2cm}
\section{Conclusion}
\vspace{-0.2cm}

This work considers a broad class of Markov chains, Ito chains. Moreover, we introduce the most general assumptions on the terms of our Ito chain. In particular, we assume that the noise can be almost arbitrary but not independent and normal. In this setting, we estimate $\mathcal{W}_{2}$ between the chain and the corresponding diffusion. Our estimates cover a large variety of special cases: in some cases replicating the state-of-the-art results, in others improving them, and in some cases being pioneering.

\vspace{-0.2cm}
\subsubsection*{Acknowledgments}
\vspace{-0.2cm}

This research of A. Beznosikov has been supported by The
Analytical Center for the Government of the Russian Federation (Agreement No. 70-2021-00143 dd. 01.11.2021, IGK
000000D730321P5Q0002).

%%%%%%%%%%%%%%%%%%%%%%%%%%%%%%%%%%%%%%%%%%%%%%%%%%%%%%%%%%%%

\bibliographystyle{iclr2024_conference}
\bibliography{ref}

%%%%%%%%%%%%%%%%%%%%%%%%%%%%%%%%%%%%%%%%%%%%%%%%%%%%%%%%%%%%

\newpage
\appendix

\newpage

\vspace{-0.2cm}
\section{Preliminaries}
\vspace{-0.2cm}

Primary we are going to work with Wasserstein-2 distance that would be defined a few lines later. Wasserstein-2 distance metricize topology of weak convergence plus the convergence of the second moments which are desirable properties for an algorithm to have and allow us to consider functions that grow at most quadratically compared to uniformly bounded if we considered just weak convergence without second moments as done in the original SGLB paper. 

Now let's define the metric. Denote by $$\Pi\big(\nu, \mu\big) := \cset{\pi(\dd \x_1, \dd \x_2)}{\int_{\R^d} \pi(\dd \x_1, \cdot) = \mu\wedge \int_{\R^d} \pi(\cdot, \dd \x_2) = \nu}$$ -- the set of all possible couplings of measures $\mu(\dd \x)$ and $\nu(\dd \x)$. Then define: $$\mathcal{W}_2(\nu, \mu) = \Big(\inf_{\pi \in \Pi(\nu, \mu)}\mathbb{E}_{\pi} {\big{|}}\x_1 - \x_2{\big{|}}^2\Big)^{\frac{1}{2}}$$ -- Wasserstein-$2$ distance between measures $\nu(\dd \x)$ and $\mu(\dd \x)$. The unique coupling achieving the infinum we would denote as $\pi_{*} = \Pi_{*}\big(\nu, \mu\big)$.

% Consider any $\pi_*(\dd \x_1, \dd \x_2) \in \argmin_{\pi \in \Pi(\nu, \mu)} \E_\pi {\big{|}}\x_1-\x_2{\big{|}}^2$. Let $\nu_*(\dd\x_1|\dd\x_2) \triangleq \frac{\pi_*(\dd\x_1, \dd\x_2)}{\mu(\dd \x_2)}$ -- conditional measure. We define by $\Pi_*\big(\nu\big| \mu\big) \triangleq \nu_*$. We note that such definition is not unique but it does not matter from the point of view of the analysis developed in the paper as we would need only that $\pi_*$ is optimal coupling. We need conditional measure $\nu_*$ since we will want to sample from it conditioned on the value sampled from some $\mu$. Essentially if $\bm{\xi}_2 \sim \mu(\dd \x)$ and $\bm{\xi}_1\big(\bm{\xi}_2\big)\big|\bm{\xi}_2 \sim \nu_*(\dd \x|\bm{\xi}_2)$ then $\bm{\xi}_1\big(\bm{\xi}_2\big) \sim \nu (\dd \x)$ and, moreover, $\E{\big{|}}\bm{\xi}_1\big(\bm{\xi}_2\big) - \bm{\xi}_2{\big{|}}^2 \equiv \mathcal{W}_2^2(\nu, \mu)$. Such considerations would allow us to "replace" some possibly singular noise (e.g. it may not have the third momentum) with well-behaved Normal noise plus some small controlled $L_2$-error.

The following tautological statement would be useful when dealing with the Wasserstein-2 metric:
\begin{statement}
Assume that we are given two random variables $\x_1 \sim \nu, \x_2 \sim \mu$ defined on common probability space with measure $\pi$, then Wasserstein-2 distance between $\nu$ and $\mu$ is bounded by $L_2$-distance between $\x_1$ and $\x_2$:
$$\mathcal{W}_2(\nu, \mu) \le \sqrt{\E_\pi{\big{|}}\x_1 - \x_2{\big{|}}^2}$$
\end{statement}

% The following lemma would play crucial role in bounding expectedd loss value:
% \begin{lemma} (Raginskiy; Xu) Let $\nu(\dd \x)$ and $\mu(\dd \x)$ two measures defined on $\R^d$. Assume that $\sigma^2 \triangleq \int_{\R^d} {\big{|}}\x{\big{|}}^2 \nu(\dd \x) \vee \int_{\R^d} {\big{|}}\x{\big{|}}^2 \mu(\dd \x) < \infty$ then for any $f:\R^d\rightarrow \R$ such that ${\big{|}}\nabla_\x f(\x){\big{|}} \le c_1 {\big{|}}x{\big{|}} + c_2$ for some $c_1 > 0, c_2 > 0$ holds:
% $$\Big|\int_{\R^d} f(\x) \nu(\dd \x) - \int_{\R^d} f(\x) \mu(\dd \x)\Big| \le (c_1 \sigma + c_2) \mathcal{W}_2(\nu, \mu)$$
% \end{lemma}

Moreover, we are going to use Kullback-Liebner divergence $D_{\mathrm{KL}}\big(\nu\big|\big| \mu\big)$ between measures $\nu$ and $\mu$ such that $\nu \ll \mu$ (absolute continious). In that case by Random-Nikodym theorem there exists $\mu$-measurable mapping $\frac{\dd \nu}{\dd \mu}$ that is non-negative $\mu$-almost everywhere such that $\E_\mu \frac{\dd \nu}{\dd \mu} = 1$ and $\E_\nu f = \E_\mu \big(f \frac{\dd \nu}{\dd \mu}\big)$ for any bounded measurable $f$. The mapping $\frac{\dd \nu}{\dd \mu}$ is called Radom-Nikodym derivative of $\nu$ with respect to $\mu$. Having $\frac{\dd \nu}{\dd \mu}$ we define Kullback-Liebner divergence as:
$$D_{\mathrm{KL}}\big(\nu\big|\big|\mu\big) \triangleq \E_{\nu} \log \frac{\dd \nu}{\dd \mu} = \int_{\mathrm{supp}\, \mu} \frac{\dd \nu}{\dd \mu}\log \frac{\dd \nu}{\dd \mu} \dd\mu \ge 0$$

The following statement is known as data-processing inequality:
\begin{statement} (DPI; Data processing inequality, see \cite{beaudry2012intuitive}) \label{DPI}
Let $T$ we some $\mu$-measurable mapping and $\nu \ll \mu$ -- measures. Let $T_{\#} \mu \triangleq \mu(T^{-1}(\cdot))$ -- push-forward measure under mapping $T$. Then we have an inequality:
$$D_{\mathrm{KL}}\big(T_{\#}\nu\big|\big|T_{\#}\mu\big) \le D_{\mathrm{KL}}\big(\nu\big|\big|\mu\big)$$
\end{statement}
We are going to apply DPI to $\mu = \LL(\XX_{{t}}: t\le k\eta)$ and $\nu = \LL(\ZZ_{{t}}: t\le k\eta)$ for a certain diffusion processes $(\XX_{{t}})_{t\ge 0}$ and $(\ZZ_{{t}})_{t\ge 0}$ with the same diffusion coefficient (to ensure absolute continuity since otherwise the measures are mutually singular due to Girsanov theorem) so that Radom-Nykodym derivative exists and henceforth $D_{\mathrm{KL}}\big(\nu\big|\big|\mu\big) < \infty$. As push-forward map $T$ we can consider $T\big((\ZZ_{{t}})_{0\le t\le k\eta}\big) \triangleq \ZZ_{k\eta}$. Then $T$ is $\mu$-measurable with the property $T_{\#}\mu = \LL(\XX_{k\eta})$ and therefore we obtain:
$$D_{\mathrm{KL}}\big(\LL(\ZZ_{k\eta})\big|\big|\LL(\XX_{k\eta})\big) \le D_{\mathrm{KL}}\big(\nu\big|\big|\mu\big)$$
DPI can be trivially proved: consider minimal $\sigma$-algebra generated by pre-image of the given $\sigma$-algebra under mapping $T$. We denote such $\sigma$-algebra as $\mathcal{F}_T$. After noting that $T_{\#}\nu \ll T_{\#}\mu$ if $\nu \ll \mu$ we consider $\mathcal{F}_T$-measurable random-variable $\E\big(\frac{\dd \nu}{\dd\mu}\big|\mathcal{F}_T\big)$. Then $\frac{\dd T_{\#}\nu}{\dd T_{\#} \mu}$ exists and $\frac{\dd T_{\#}\nu}{\dd T_{\#} \mu} \equiv \E\big(\frac{\dd \nu}{\dd\mu}\big|\mathcal{F}_T\big)$ $\mu$-almost everywhere. Then conditional Jensen inequality applied to KL implies DPI.
% DPI would be especcially useful with the following lemma that bounds Wasserstein-2 and KL-divergence:
In the work we are going to deal with a wide range of sequences in the form:
\begin{equation}
    r_{k+1} \le (1 - \eta\sigma_1) r_k + \eta\sigma_2
\end{equation}
with $r_0 = 0$, $0 < \eta\sigma_1 < 1$ and $\sigma_2 \ge 0$. Iterating such sequence immediately yields the formula:
\begin{equation}
    r_k \le (1 - (1 - \eta\sigma_1)^{k+1})\frac{\sigma_2}{\sigma_1}\le \frac{\sigma_2}{\sigma_1}
\end{equation}
Clearly, in order for the bound to be sound one needs $\sigma_2 \ll \sigma_1$.

The following theorem plays significant role in the proofs of our main results:
\begin{theorem} (Girsanov, see Theorems 12.1, 12.2 on p. 368-370 in \cite{baldi2017stochastic})\label{classic-girsanov}
 Assume that we are given two SDEs in Ito form:
 $$\dd\XX_{i,{t}} = b_i(\XX_{i,{t}}, t)\dd t + \sigma_i(\XX_{i,{t}}, t) \dd \w_{{t}}$$
 for $i \in \{1,2\}$ and where $\w_{{t}}$ is standart Wiener process valued in $\R^d$. Denote by $\mu_i \triangleq \LL(\XX_{i,{t}}: 0\le t\le T)$ for some fixed $T > 0$. Assume that functions $b_i(\x, t), \sigma_i(\x, t)$ are Lipshitz continious in $\x$ and $\sigma_i(\x, t) = \sigma_i^T(\x, t) \ge \sigma_T I_d$ almost everywhere in $x$ for some constant $\sigma_T > 0$ and for every $t \in [0, T]$. Then $\mu_2 \ll \mu_1$ if and only if $\sigma_1(\x, t) \equiv \sigma_2(\x, t) \equiv \sigma(\x, t)$ almost everywhere in $\x\in\R^d$ $\forall t (0 \le t\le T)$ and the Radom-Nykodym derivative is given by:
 $$\frac{\dd \mu_2}{\dd \mu_1} = e^{Z_T((\XX_1)_{t\le T})}$$
 Where we denote $\Delta b(\x, t) \triangleq \{\sigma(\x, t)\}^{-1} (b_2(\x, t) - b_1(\x, t))$ and:
 $$Z_t(\XX) \triangleq \underbrace{\int_{0}^t \langle \Delta b(\XX_{{s}}, s), \dd \w_{{s}}\rangle_2}_{\text{Ito Matringale}} - \int_{0}^t {\big{|}}\Delta b(\XX_{{s}}, s){\big{|}}^2 \dd s\,,$$
 where $\XX = (\XX_{{s}})_{s\le t}$ any vector valued continuous path defined over $[0, t]$.
\end{theorem}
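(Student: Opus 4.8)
The plan is to realize the comparison as a single Girsanov change of measure carried out on the probability space of the first SDE, reduce the time-$T$ marginal divergence to a full path divergence by the data-processing inequality, and then evaluate the resulting path divergence explicitly. Concretely, I would work on the filtered space $(\Omega,\mathcal{F},(\mathcal{F}_t),\sP)$ on which $\w_t$ is a Brownian motion and $\ZZ_t$ solves the first equation, and set $\theta_t=\sigma(\ZZ_t)^{-1}G_t$. This process is $(\w_t)$-adapted, and since uniform ellipticity $\sigma(\ZZ_t)\ge\sigma_0 I_d$ gives $\|\sigma(\ZZ_t)^{-1}\|_{\mathrm{op}}\le\sigma_0^{-1}$, the assumed second-moment integrability of $G$ yields $\int_0^T\E\|\theta_t\|^2\dd t\le\sigma_0^{-2}\int_0^T\E\|G_t\|^2\dd t<\infty$. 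I then introduce the exponential local martingale $\mathcal{E}_t=\exp(-\int_0^t\langle\theta_s,\dd\w_s\rangle-\tfrac12\int_0^t\|\theta_s\|^2\dd s)$ and the candidate measure $\sQ$ defined by $\dd\sQ/\dd\sP=\mathcal{E}_T$.

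Next I would invoke the general adapted-drift form of Girsanov's theorem: under $\sQ$ the process $\widetilde{\w}_t:=\w_t+\int_0^t\theta_s\dd s$ is a Brownian motion, and substituting $\sigma(\ZZ_t)\dd\widetilde{\w}_t=\sigma(\ZZ_t)\dd\w_t+G_t\dd t$ shows that under $\sQ$ the same $\ZZ_t$ solves $\dd\ZZ_t=\f(\ZZ_t)\dd t+\sigma(\ZZ_t)\dd\widetilde{\w}_t$, i.e. the drift $G_t$ is absorbed. Because $b$ and $\sigma$ are Lipschitz (Assumption~\ref{as:key}) and $\sigma$ is uniformly elliptic, this SDE has a unique law, so the law of $\ZZ$ under $\sQ$ coincides with $\mathcal{L}((\ZZ^*_t)_{t\le T})$; in particular the push-forward of $\sQ$ under $\omega\mapsto\ZZ_T(\omega)$ is $\mathcal{L}(\ZZ^*_T)$, while the push-forward of $\sP$ is $\mathcal{L}(\ZZ_T)$. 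Since $\mathcal{E}_T>0$ makes $\sP$ and $\sQ$ mutually absolutely continuous, the data-processing inequality (Statement~\ref{DPI}) applied to this map gives $\KL(\mathcal{L}(\ZZ_T)\,\|\,\mathcal{L}(\ZZ^*_T))\le\KL(\sP\,\|\,\sQ)$. Finally $\KL(\sP\,\|\,\sQ)=\E_{\sP}[-\log\mathcal{E}_T]=\E_{\sP}\!\int_0^T\langle\theta_s,\dd\w_s\rangle+\tfrac12\E_{\sP}\!\int_0^T\|\theta_s\|^2\dd s$; the stochastic-integral term vanishes in expectation, leaving $\tfrac12\E\int_0^T\|\sigma(\ZZ_s)^{-1}G_s\|^2\dd s\le\tfrac12\sigma_0^{-2}\int_0^T\E\|G_s\|^2\dd s$, which is dominated by $\sigma_0^{-2}\int_0^T\E\|G_s\|^2\dd s$.

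The main obstacle is that the hypothesis supplies only the $L^2$-type control $\int_0^T\E\|G_t\|^2\dd t<\infty$, which is far weaker than the exponential (Novikov) integrability normally used to guarantee that $\mathcal{E}_t$ is a true martingale, that $\sQ$ is a genuine probability measure, and that $\E_{\sP}\int_0^T\langle\theta_s,\dd\w_s\rangle=0$. I would resolve this by localization: introduce stopping times $\tau_n=\inf\{t:\int_0^t\|\theta_s\|^2\dd s\ge n\}\wedge T$, for which Novikov's condition holds on $[0,\tau_n]$, carry out the measure change and the entropy computation for the stopped exponentials $\mathcal{E}_{t\wedge\tau_n}$, and then pass to the limit $n\to\infty$ using $\tau_n\uparrow T$ a.s. (a consequence of $\int_0^T\|\theta_s\|^2\dd s<\infty$ a.s., which follows from Tonelli), the lower semicontinuity of relative entropy, and monotone/dominated convergence; the finiteness of the limiting bound $\sigma_0^{-2}\int_0^T\E\|G_s\|^2\dd s$ is precisely what furnishes the integrability needed to take the limit. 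This localization, rather than the spectral estimate $\|\sigma^{-1}\|_{\mathrm{op}}\le\sigma_0^{-1}$ or the routine DPI reduction, is where the real work lies, and it is exactly what replaces the state-dependence (Markovianity) hypothesis of the classical statement in Theorem~\ref{classic-girsanov} — hence the \emph{mixed Ito/adapted} flavor of the result and its weaker constant.
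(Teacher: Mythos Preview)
Your proposal actually addresses Theorem~\ref{th:girsanov} (the paper's adapted-drift KL bound), not the cited classical statement Theorem~\ref{classic-girsanov}, which the paper quotes from \cite{baldi2017stochastic} without proof. Comparing to the paper's proof of Theorem~\ref{th:girsanov}: the paper takes a more circuitous route, invoking Gy\"ongy's mimicking theorem (Theorem~\ref{thm:gyongy}) twice to replace the adapted drift $G_t$ by a Markovian one $\bar g(x,t)=\E(G_t\mid Z_t=x)$, so that the Markovian Girsanov of Theorem~\ref{classic-girsanov} can be applied after a level-$r$ truncation of that drift; the limit $r\to\infty$ is then handled by a bespoke Lemma~\ref{lemma:KL-subseq} on KL along weakly converging densities. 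You bypass Gy\"ongy entirely by using the adapted-drift form of Girsanov directly and closing with lower semicontinuity of $\KL$, which is shorter and conceptually cleaner; the price is that you must import a version of Girsanov more general than the one the paper actually states.

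One step in your localization needs sharpening. With the stopped exponential $\mathcal{E}_{\tau_n}$, under $Q_n=\mathcal{E}_{\tau_n}\cdot P$ the process $Z$ solves $\dd Z_t=b(Z_t)\dd t+G_t\mathbf{1}_{\{t>\tau_n\}}\dd t+\sigma(Z_t)\dd\widetilde w^n_t$, so the law of $Z_T$ under $Q_n$ is \emph{not} $\mathcal{L}(Z^*_T)$; to invoke lower semicontinuity of $\KL$ with second argument $\mathcal{L}(Z^*_T)$ you would need $\mathcal{L}(Z_T)_{Q_n}\Rightarrow\mathcal{L}(Z^*_T)$, which amounts to $\E_P[\mathcal{E}_T]=1$---precisely the martingale property you are trying to avoid assuming. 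The clean fix, and in effect what the paper does (minus Gy\"ongy), is to truncate the \emph{process} rather than the exponential: let $Z^n_t$ solve $\dd Z^n_t=b(Z^n_t)\dd t+G_t\mathbf{1}_{\{t\le\tau_n\}}\dd t+\sigma(Z^n_t)\dd w_t$. Novikov holds for $Z^n$, Girsanov gives $\KL(\mathcal{L}(Z^n_T)\,\|\,\mathcal{L}(Z^*_T))\le\sigma_0^{-2}\int_0^T\E\|G_s\|^2\dd s$ with the second argument now fixed, and since $Z^n_T=Z_T$ on $\{\tau_n=T\}$ with $P(\tau_n=T)\to 1$, lower semicontinuity of $\KL(\cdot\,\|\,\mathcal{L}(Z^*_T))$ under weak convergence of the first argument closes the argument without circularity.
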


Moreover, the following result due to \cite{gyongy1986mimicking} would be needed within Girsanov theorem:
\begin{theorem} (Gyongy \cite{gyongy1986mimicking}) \label{thm:gyongy}
Assume that we are given the following SDE:
$$\dd\XX_{{t}} =  {B}_{{t}}\dd t + {\sigma}_{{t}} \dd \w_{{t}}$$
where $({B}_{{t}})_{t\ge 0}, ({\sigma}_{{t}})_{t\ge 0}$ are some predictable w.r.t. $(\w_{{t}})_{t\ge 0}$ processes. Let $(\XX_{{t}})_{t\ge 0}$ be a solution. Then $\LL(\XX_{{t}}) = \LL(\widetilde{\XX}_{{t}}) \forall t\ge 0$ where $(\widetilde{\XX}_{{t}})_{t\ge 0}$ is a solution to Ito SDE:
$$\dd\widetilde{\XX}_{{t}} =  \g(\widetilde{\XX}_{{t}}, t) \dd t + \sigma(\widetilde{\XX}_{{t}}, t) \dd \w_{{t}}$$
where $\g(\x,t) \triangleq \E({B}_{{t}}|\XX_{{t}} = \x)$ and $\sigma(\x, t) \triangleq \sqrt{\E ({\sigma}_{{t}} {\sigma}_{{t}}^{\mathrm{T}} | \XX_{{t}} = \x)}$.
\end{theorem}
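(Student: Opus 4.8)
The plan is to reduce the equality of one-dimensional marginals to the statement that both $\LL(\XX_t)$ and $\LL(\widetilde{\XX}_t)$ solve one and the same forward (Fokker--Planck) equation with the same initial datum $\delta_{x_0}$, and then to upgrade ``solving the same equation'' into genuine equality of laws by a backward Kolmogorov duality argument. The engine throughout is It\^o's formula combined with the tower property of conditional expectation, and the uniform ellipticity of $\sigma$ will be used crucially at the uniqueness step.

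First I would fix an arbitrary test function $f\in C_c^\infty(\R^d)$ and apply It\^o's formula to $f(\XX_t)$ along the original (non-Markovian) Ito process, obtaining
$$f(\XX_t) = f(\XX_0) + \int_0^t\Big(\langle\nabla f(\XX_s),{B}_{s}\rangle + \half\Tr\big({\sigma}_{s}{\sigma}_{s}^{\mathrm{T}}\nabla^2 f(\XX_s)\big)\Big)\dd s + (\text{martingale}).$$
Taking expectations annihilates the martingale term (by a standard localization argument using that $\nabla f$ has compact support), and the tower property lets me replace ${B}_{s}$ and ${\sigma}_{s}{\sigma}_{s}^{\mathrm{T}}$ by their conditional expectations given $\XX_s$. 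Writing $\g(\x,s)=\E({B}_{s}\mid\XX_s=\x)$ and $a(\x,s)=\E({\sigma}_{s}{\sigma}_{s}^{\mathrm{T}}\mid\XX_s=\x)=\sigma(\x,s)\sigma(\x,s)^{\mathrm{T}}$, this yields
$$\frac{\dd}{\dd t}\E f(\XX_t) = \E\Big(\langle\nabla f(\XX_t),\g(\XX_t,t)\rangle + \half\Tr\big(a(\XX_t,t)\nabla^2 f(\XX_t)\big)\Big),$$
so the marginal flow $t\mapsto\LL(\XX_t)$ is a weak solution of the Fokker--Planck equation driven by the \emph{Markovian} generator $\gA_{t} f = \langle\g(\cdot,t),\nabla f\rangle + \half\Tr(a(\cdot,t)\nabla^2 f)$. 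Repeating the same computation for the mimicking diffusion $\widetilde{\XX}$ --- which is Markovian by construction with exactly the coefficients $\g$ and $\sigma=\sqrt{a}$ --- shows that $\LL(\widetilde{\XX}_t)$ solves the identical equation with the identical initial condition $\delta_{x_0}$.

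To convert this into equality of marginals I would use duality rather than a direct uniqueness theorem for measures. Fix a horizon $T$ and solve the backward Cauchy problem $\partial_s u + \gA_{s} u = 0$ on $[0,T]\times\R^d$ with terminal datum $u(T,\cdot)=f$; under the uniform ellipticity $\sigma(\cdot)\ge\sigma_{0} I_d$ and the boundedness of the coefficients available in Assumption \ref{as:key}, this parabolic problem is well-posed and $u$ enjoys enough Sobolev regularity (Krylov's $W^{1,2}_p$ estimates) to admit a generalized It\^o formula. Applying It\^o to $u(s,\XX_s)$ and conditioning as above, the drift of $u(s,\XX_s)$ collapses to $(\partial_s u+\gA_{s} u)(s,\XX_s)\equiv 0$, whence $\E f(\XX_T)=\E u(0,\XX_0)=u(0,x_0)$; running the identical argument for $u(s,\widetilde{\XX}_s)$ gives $\E f(\widetilde{\XX}_T)=u(0,x_0)$ as well. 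Since $f\in C_c^\infty(\R^d)$ and $T\ge 0$ are arbitrary, $\LL(\XX_t)=\LL(\widetilde{\XX}_t)$ for all $t\ge 0$.

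The main obstacle is the well-posedness and regularity of this auxiliary PDE and, correspondingly, the justification of the It\^o formula for $u(s,\XX_s)$ along the \emph{original rough} process: the coefficients $\g$ and $a$ are only Borel measurable (being conditional expectations), so classical smooth-coefficient parabolic theory does not apply and one must lean on ellipticity, via Krylov's $W^{1,2}_p$ estimates or, equivalently, well-posedness of the associated martingale problem in the Stroock--Varadhan sense. Two subsidiary technical points also need care: the existence of a measurable matrix square root $\sigma=\sqrt{a}$ that is a bona fide diffusion coefficient (immediate here, since $a\ge\sigma_{0}^2 I_d$ is uniformly positive definite), and the existence of regular conditional probabilities making $\g$ and $a$ well-defined as functions of $(\x,t)$.
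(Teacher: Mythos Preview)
The paper does not supply its own proof of this statement: Theorem~\ref{thm:gyongy} is quoted in the Preliminaries as a known result and attributed directly to Gy\"ongy~\cite{gyongy1986mimicking}, with no argument given. So there is no ``paper's proof'' to compare against; the theorem is used as a black box (inside the proof of Theorem~\ref{th:girsanov}) to pass from non-Markovian SDEs to Markovian ones with the same one-time marginals.

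That said, your sketch is essentially the standard route to Gy\"ongy's mimicking theorem and is sound in outline: It\^o's formula plus the tower property shows that $\LL(\XX_t)$ and $\LL(\widetilde{\XX}_t)$ both solve the forward equation for the Markovian generator $\gA_t$ built from $\g$ and $a$, and uniqueness for that forward problem (obtained here by duality with the backward equation, leaning on Krylov's $W^{1,2}_p$ theory under uniform ellipticity) forces the marginals to coincide. This is precisely the strategy of the original paper, which invokes Krylov's estimates and the Stroock--Varadhan martingale-problem machinery in lieu of classical parabolic regularity; you have correctly identified that the only genuine difficulty is the uniqueness/regularity step, since $\g$ and $a$ are merely Borel measurable. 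One small caveat: you import the uniform ellipticity and boundedness from Assumption~\ref{as:key}, but the theorem as stated in the paper is phrased abstractly without those hypotheses---Gy\"ongy's actual result does require non-degeneracy conditions of this kind, so your instinct to invoke them is right, but strictly speaking you are proving a (sufficient) special case of the quoted statement rather than the statement verbatim.
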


We also consider the following construction: for a chain $(\XX_k)_{k\in\mathbb{Z}_+}$ in the form:
$$\XX_{k+1} = \XX_k + \eta {B}_k + \N(\z_d, \eta \sigma^2 I_d)$$
where $({B}_k)_{k\in\mathbb{Z}_+}$ is $(\XX_k)_{k\in\mathbb{Z}_+}$-predictable, we define the process $\overline{\XX}_{{t}}$:
$$\dd \overline{\XX}_{{t}} = - \overline{{B}}_{{t}} \dd t + \sigma \dd \w_{{t}}$$
where $\overline{{B}}_{{t}} \triangleq \sum_{k = 0}^{\infty} {B}_k \mathbf{1}_{t\in[\eta k, \eta (k+1))}$. Then we obtain that: $$\LL(\XX_k: k\in\mathbb{Z}_+) = \LL(\overline{\XX}(k\eta): k\in \mathbb{Z}_+).$$
Combining with the result due to Gyongy we, moreover, obtain: $$\LL(\overline{\XX}_{{t}}) = \LL(\widetilde{\XX}_{{t}})\forall t\ge 0,$$ where $(\widetilde{\XX}_{{t}})_{t\ge 0}$ satisfies Ito SDE:
$$\dd \widetilde{\XX}_{{t}} = \E\big(\overline{{B}}_{{t}}|\overline{\XX}_{{t}} = \widetilde{\XX}_{{t}}\big) \dd t + \sigma \dd \w_{{t}}$$
We call such processes $(\overline{\XX}_{{t}})_{t\ge 0}$ and $(\widetilde{\XX}_{{t}})_{t\ge 0}$ as interpolation of the chain $(\XX_k)_{k\in\mathbb{Z}_+}$ into the continious process. The core idea of interpolation is to obtain a bound on $D_{\mathrm{KL}}$ between the chain' law and some other chain' law by interpolating them into continuous processes in Ito form (recall that $(\widetilde{\XX}_{{t}})_{t\ge 0}$ is in Ito SDE form) and then by using Girsanov theorem followed by DPI to switchback to the chains.
\begin{theorem} (Deterministic Bihari-LaSalle inequality \cite{bihari1956generalization})\label{BLI}. Let $H$ be constant, $x(t) \ge 0$ a càdlàg function for $t\ge 0$ and $A(t) \ge 0$ a non-decreasing function càdlàg function with $A(0) = 0$. Let $w(x) > 0$ for $x > 0$ be continuous non-decreasing function on $\mathbb{R}_{\ge 0}$. Let $W(v) = \int_{C}^{v} \frac{1}{w(x)}\mathbf{d}x$ for some $C > 0$. If function $x(t)$ satisfies:
$$x(t) \le \int_{0+}^{t} w(x(t)) \mathbf{d}A(t) + H \forall t\in [0, T]$$
for some $T > 0$ and $H > 0$ and if $W(H) + A(t)$ is in domain of $W^{-1}$ then:
$$x(t) \le W^{-1}(W(H) + A(t))\forall t\in [0, T]$$
\end{theorem}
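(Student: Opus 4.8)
The plan is to follow the classical Bihari--LaSalle argument, which generalizes Gr\"onwall's lemma by integrating a differential inequality after a change of variables through $W$. First I would introduce the running majorant
\[
u(t) = H + \int_{0+}^{t} w\big(x(s)\big)\,\dd A(s),
\]
so that the hypothesis reads $x(t)\le u(t)$ for all $t\in[0,T]$. Since $w>0$ and $A$ is non-decreasing, $u$ is itself non-decreasing and càdlàg, with $u(0)=H>0$ because $A(0)=0$; in particular $u(t)\ge H>0$, so $w(u(t))\ge w(H)>0$ and every division by $w(u)$ below is legitimate.

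The core step exploits the monotonicity of $w$. From $x(s)\le u(s)$ and $w$ non-decreasing we get $w(x(s))\le w(u(s))$, hence, as Stieltjes measures, $\dd u(t) = w(x(t))\,\dd A(t)\le w(u(t))\,\dd A(t)$. Dividing by the strictly positive factor $w(u(t))$ gives $\dd u(t)/w(u(t))\le \dd A(t)$. Recognizing the left-hand side as the differential of $W\circ u$ (recall $W'(v)=1/w(v)$), I would integrate from $0$ to $t$ to obtain
\[
W\big(u(t)\big) - W\big(u(0)\big) \le A(t) - A(0) = A(t),
\]
and since $u(0)=H$ this reads $W(u(t))\le W(H) + A(t)$.

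Finally, because $w>0$ the map $W$ is strictly increasing, so $W^{-1}$ is increasing on the range of $W$; the hypothesis that $W(H)+A(t)$ lies in the domain of $W^{-1}$ is exactly what allows the inversion without leaving that range (this is where a finite blow-up time would otherwise be encoded, namely when $\int^{\infty}1/w$ converges). Applying $W^{-1}$ to $W(u(t))\le W(H)+A(t)$ yields $u(t)\le W^{-1}\big(W(H)+A(t)\big)$, and combining with $x(t)\le u(t)$ gives the claimed bound.

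The main obstacle is making the integration step rigorous when $A$ carries atoms, since the fundamental theorem of calculus for $W\circ u$ fails at jumps. The saving observation is that $W$ is concave, as its derivative $1/w$ is non-increasing; hence at a jump time $t$ one has the one-sided estimate $W(u(t))-W(u(t^-))\le W'(u(t^-))\big(u(t)-u(t^-)\big)=\tfrac{1}{w(u(t^-))}\big(u(t)-u(t^-)\big)$. With the predictable (left-limit) convention for the Stieltjes integrand, $u(t)-u(t^-)=w(x(t^-))\,\Delta A(t)$, and since $x(t^-)\le u(t^-)$ and $w$ is non-decreasing, $w(x(t^-))\le w(u(t^-))$, so each jump contributes at most $\Delta A(t)$. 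Summing these jump bounds together with the absolutely continuous part of $\dd A$ reproduces $W(u(t))\le W(H)+A(t)$ in full generality; equivalently, one may prove the estimate first for piecewise-constant $A$ and pass to the limit using the continuity of $W$ and $W^{-1}$. I expect this bookkeeping around atoms, rather than the algebra, to be the only genuinely delicate point.
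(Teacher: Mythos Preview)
Your argument is the classical Bihari--LaSalle proof and is correct, including the care you take with atoms of $A$ via the concavity of $W$. Note, however, that the paper does not supply its own proof of this statement: it is quoted as a preliminary result with a citation to \cite{bihari1956generalization}, so there is nothing to compare against beyond the original reference, whose argument your proposal faithfully reproduces.
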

% Then, right after DPI we are going to consider Girsanov theorem for the processes in order to bound $D_{\mathrm{KL}}\big(\nu\big|\big|\mu\big) = \Oo\big(\frac{K\overline{\eta}}{\gamma}(\eta^{\half}+ \frac{\sigma_0^2 + C_0^2\sigma_p^2}{S})\big)$. See Section \ref{sec:proof} for the details and concrete constants in the bounds.

\vspace{-0.2cm}
\section{Missing Proofs}
\vspace{-0.2cm}
\vspace{-0.2cm}
\subsection{Useful Lemmas}
\vspace{-0.2cm}
% \begin{lemma}
% Assume $A$ - PD matrix and $A+B$ is SPD ($B$ may have negative spectra). Then we have operator inequality:
% $$\mathrm{tr\,}\Big(\big(\sqrt{A + B} - \sqrt{A}\big)^2\Big) \le \big|\mathrm{tr\,}(B)\big|$$
% \end{lemma}
% \begin{proof}
% We have $\mathrm{tr\,}\Big(\big(\sqrt{A+B}-\sqrt{A}\big)^2\Big) = \mathrm{tr\,}\Big(A\big(\sqrt{I_{d} + \sqrt{A^{-1}} B \sqrt{A^{-1}}} - I_{d}\big)^2\Big) \le \mathrm{tr\,}\Big(\sqrt{A}B\sqrt{A^{-1}}\Big)$ where we used $\big(\sqrt{I_{d} + C} - I_{d}\big)^2 \le \big(I_{d} + C - I_{d}\big) = C$ which holds due to commutativity of symmetric positive definite matrices $I_{d}$ and $I_{d}+C$ which allows us to transfer numeric inequality $(\sqrt{1+c} - 1)^2 = (\sqrt{1+c} - 1)(\sqrt{1+c} - 1) \le (\sqrt{1+c} - 1)(\sqrt{1+c} + 1) = c\forall c\in\mathbb{R}$ to the martrices.
% \end{proof}
% \begin{corollary}
% The following inequality holds:
% $$ \mathrm{tr}\,\Big(\sqrt{\eta^{\gamma}\sigma(\x) + \eta^{\alpha+\gamma} \Delta(\x)} - \sqrt{\eta^{\gamma}}\sigma(\x)\Big)^2 \le M^2\big(1 + {\big{|}}\x{\big{|}}^2\big) \eta^{\alpha+\gamma}$$
% \end{corollary}

\begin{lemma}\label{lemma:R_bound}
Let Assumption \ref{as:key} holds. Then for any $k \in \mathbb{N}_0$:
$$1+\E {\big{\|}}\XX_{k}{\big{\|}}^2 \le e^{C k\eta}\big(1+{\big{\|}}\x_{0}{\big{\|}}^2\big)\triangleq R^2(k\eta),$$
where $C =  8 (1 + M^2)$.
\end{lemma}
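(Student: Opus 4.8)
The plan is to derive a recursive inequality for the sequence $u_k := 1 + \E\|\XX_k\|^2$ and then iterate it. First I would take the chain update \eqref{eq:main_chain}, write $\XX_{k+1} = \XX_k + \eta(b(\XX_k) + \delta_k) + \sqrt{\eta^{1+\gamma}}(\sigma(\XX_k) + \Delta_k)\epsilon_k(\XX_k)$, and compute $\E\|\XX_{k+1}\|^2$ by conditioning on $\XX_k$ (and on the $\sigma$-algebra up to step $k$, so that $\delta_k$ and $\Delta_k$ are treated as known). Using $\E_\epsilon \epsilon_k(\XX_k) = 0_d$, the cross term between the Gaussian-type noise and the drift part vanishes in expectation, so
$$
\E\big[\|\XX_{k+1}\|^2 \,\big|\, \XX_k\big] = \big\|\XX_k + \eta(b(\XX_k)+\delta_k)\big\|^2 + \eta^{1+\gamma}\,\E_\epsilon\big\|(\sigma(\XX_k)+\Delta_k)\epsilon_k(\XX_k)\big\|^2.
$$
The noise term, using $\E_\epsilon \epsilon_k\epsilon_k^{\mathrm T} = I_d$, equals $\eta^{1+\gamma}\Tr\big((\sigma(\XX_k)+\Delta_k)^2\big)$; expanding and using $2|\Tr(\sigma\Delta_k)| \le \Tr(\sigma^2) + \Tr(\Delta_k^2)$ bounds this by $2\eta^{1+\gamma}\Tr(\sigma(\XX_k)^2) + 2\eta^{1+\gamma}\Tr(\Delta_k^2) \le 2\eta\,\Tr(\sigma\sigma^{\mathrm T}) + 2\eta^{1+\gamma}\Tr(\Delta_k^2)$ since $\eta \le 1$ and $\gamma \ge 0$.

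Next I would expand $\|\XX_k + \eta(b(\XX_k)+\delta_k)\|^2 = \|\XX_k\|^2 + 2\eta\langle \XX_k, b(\XX_k)+\delta_k\rangle + \eta^2\|b(\XX_k)+\delta_k\|^2$ and bound the inner product by $2\eta\langle\XX_k, b(\XX_k)+\delta_k\rangle \le \eta\|\XX_k\|^2 + \eta\|b(\XX_k)+\delta_k\|^2 \le \eta\|\XX_k\|^2 + 2\eta\|b(\XX_k)\|^2 + 2\eta\|\delta_k\|^2$, and similarly $\eta^2\|b(\XX_k)+\delta_k\|^2 \le 2\eta\|b(\XX_k)\|^2 + 2\eta\|\delta_k\|^2$ (again $\eta\le 1$). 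Collecting everything, the whole right-hand side is at most $(1+\eta)\|\XX_k\|^2 + C'\eta\big(\|b(\XX_k)\|^2 + \Tr(\sigma(\XX_k)\sigma^{\mathrm T}(\XX_k)) + \|\delta_k\|^2 + \eta^\gamma\Tr(\Delta_k^2)\big)$ for an absolute constant $C'$ (one can check $C'=4$ suffices, or be generous). Now invoke the displayed consequence of Assumption~\ref{as:key} — namely $\|b(\XX_k)\|^2 + \Tr(\sigma\sigma^{\mathrm T}) + \|\delta_k\|^2 + \eta^\gamma\Tr(\Delta_k^2) \le M^2(1+\|\XX_k\|^2)$ — to get, after taking full expectations,
$$
u_{k+1} \le (1+\eta)u_k + C'\eta M^2 u_k + \eta \le \big(1 + C\eta\big)u_k
$$
for $C = 8(1+M^2)$ (absorbing the lone $+\eta$ into the multiplicative term since $u_k \ge 1$, and checking $1 + \eta + C'M^2\eta + \eta \le 1 + 8(1+M^2)\eta$ with $C'=4$).

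Finally, iterating $u_{k+1} \le (1+C\eta)u_k$ from $u_0 = 1 + \|\x_0\|^2$ gives $u_k \le (1+C\eta)^k(1+\|\x_0\|^2) \le e^{Ck\eta}(1+\|\x_0\|^2)$, using $1+x \le e^x$. This is exactly the claimed bound $1 + \E\|\XX_k\|^2 \le e^{Ck\eta}(1+\|\x_0\|^2) = R^2(k\eta)$. There is no real obstacle here — the only mild subtleties are (i) keeping the bookkeeping of absolute constants clean so that the final constant is genuinely $\le 8(1+M^2)$ rather than something larger, which is why the proof is generous with Young's-inequality splittings and repeatedly uses $\eta \le 1$ and $\gamma \ge 0$; and (ii) making sure the conditioning is set up so that the martingale (cross) term vanishes and $\delta_k,\Delta_k$ can be pulled out — both are immediate from the unbiasedness and unit-covariance parts of Assumption~\ref{as:key}. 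The step I would be most careful about is the constant-tracking in the passage $u_{k+1} \le (1+\eta)u_k + C'\eta M^2 u_k + \eta \le (1+C\eta)u_k$, but it is routine arithmetic given $u_k \ge 1$.
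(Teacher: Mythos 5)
Your proposal is correct and follows essentially the same route as the paper's proof: expand $\|\XX_{k+1}\|^2$, eliminate the cross term via unbiasedness of $\epsilon_k$, bound the remaining terms through the consolidated $M^2$ inequality to get a recursion $u_{k+1}\le(1+C\eta)u_k$, and iterate with $1+x\le e^x$. The only differences are cosmetic (your explicit trace/Frobenius bound for the noise term and slightly different constant bookkeeping, both of which land within $C=8(1+M^2)$).
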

\begin{proof}
% Let's define the following operators:
% $$( \mathcal{R}_{\eta} f)(\x) = \eta^{\alpha-\nu}\langle  \g(\x), \nabla_{\x} f(\x)\rangle + \eta^{\beta-\nu}\mathrm{tr\,}\big(\Sigma(\x) \nabla^{2}_{\x\x}f(\x)\big) + \eta^{1-\nu}  \langle \nabla^{2}_{\x\x}f(\x)\big( \f(\x) + \eta^{\alpha}\g(\x)\big), \f(\x) + \eta^{\alpha}\g(\x)\rangle$$
% $$\nu = \alpha\wedge \beta \wedge 1 > 0$$
% $$\mathcal{B}_{\eta} = \mathcal{A} + \eta^{\nu}\mathcal{R}_{\eta}$$
% Let $\phi(\x) = 1+|\x|^2$. Then $$\mathbb{E}\big(\phi(\XX_{k+1})\big|\XX_{k}\big) = \phi(\XX_{k}) + \eta(\mathcal{A} + \eta^{\nu}\mathcal{B}_{\eta}) \phi(\XX_{k})$$ which implies the lemma since:
% $$\phi(\XX_{k}) + \eta(\mathcal{A} + \eta^{\nu}\mathcal{B}_{\eta})\phi(\XX_{k}) = \phi(\XX_{k}) + \eta\mathcal{A}\phi(\XX_{k}) + \eta^{1+\nu} \mathcal{R}_{\eta}\phi(\XX_{k})  \le (1+C\eta) \phi(\XX_{k})$$
% Let's consider the equation for $\XX_{k+1}$:
% \begin{equation*}
%     \XX_{k+1} = \XX_k + \eta \big( \f\big(\XX_k\big) + \delta_{k}\big)+ \sqrt{\eta^{1+\gamma}} \big(\sigma\big(\XX_k\big) + \Delta_{k}\big)\epsilon_{k}\big(\XX_k\big),
% \end{equation*}
% Taking squared norm yields:
Using the definition of \eqref{eq:main_chain}, we get
\begin{equation*}
    \|\XX_{k+1}\|^2 = \|\XX_k\|^2 + \eta^2 \big\| \f\big(\XX_k\big) + \delta_{k}\big\|^2+ \eta^{1+\gamma} \Big\|\big(\sigma\big(\XX_k\big) + \Delta_{k}\big)\epsilon_{k}\big(\XX_k\big)\Big\|^2 + 2\eta \langle \XX_k, \f\big(\XX_k\big) + \delta_{k}\rangle + \xi_k,
\end{equation*}
where $\xi_k = 2\sqrt{\eta^{1+\gamma}}\langle  \big(\sigma\big(\XX_k\big) + \Delta_{k}\big)\epsilon_{k}\big(\XX_k\big), \XX_k + \eta \big( \f\big(\XX_k\big) + \delta_{k}\big)\rangle$.
With unbiasedness of $e_k(X_k)$, we have that $\mathbb{E}[\xi_k| X_k ] = 0$. The other terms can be bounded with Assumption \ref{as:key} and notation of $M$ (from Section \ref{sec:psa}) as follows:
\begin{eqnarray}
    \label{eq:temp1}
   &\big\| \f\big(\XX_k\big) + \delta_{k}\big\|^2 \le 2 M^2 (1+\|X_k\|^2),
    \notag\\
   & \Big\|\big(\sigma\big(\XX_k\big) + \Delta_{k}\big)\epsilon_{k}\big(\XX_k\big)\Big\|^2\le 2 M^2 (1+\|X_k\|^2),
    \\ \notag
    &\langle \XX_k, \f\big(\XX_k\big) + \delta_{k}\rangle \le \frac{1}{2} \left( \|X_k\|^2 +  \| \f\big(\XX_k\big) + \delta_{k} \|^2 \right) \le (1 + M^2 )(1+\|X_k\|^2).
\end{eqnarray}
It gives the next estimate:
\begin{align*}
    1 + \mathbb{E}\|\XX_{k+1}\|^2 
    \le&
    1 +  \mathbb{E} \|\XX_k\|^2 + \eta^2  \mathbb{E} \big\| \f\big(\XX_k\big) + \delta_{k}\big\|^2
    \\
    &+ \eta^{1+\gamma}  \mathbb{E} \Big\|\big(\sigma\big(\XX_k\big) + \Delta_{k}\big)\epsilon_{k}\big(\XX_k\big)\Big\|^2 + 2\eta  \mathbb{E}\langle \XX_k, \f\big(\XX_k\big) + \delta_{k}\rangle
    \\
    \le& 
    1 +  \mathbb{E} \|\XX_k\|^2 + 2\left( \eta^2 + \eta^{1+\gamma} + 2 \eta\right)( 1 + M^2) (1+\mathbb{E}\|X_k\|^2)
    \\
    \le& \big(1 + \eta \cdot 2 (1 + M^2) (2 + \eta^{\gamma} + \eta)\big) (1+\mathbb{E}\|\XX_k\|^2),
\end{align*}
Running the recursion with $C = 2 (1 + M^2) (2 + \eta^{\gamma} + \eta)$, we obtain:
\begin{equation*}
    1 + \mathbb{E}\|\XX_{k}\|^2 \le \big(1 + C\eta \big)^{k} (1+\|x_0\|^2) = e^{k\log(1 + C\eta)} (1+\|x_0\|^2) \le e^{Ck\eta} (1+\|x_0\|^2).
\end{equation*}
Using that $\eta \leq 1$, we get that $C = 2 (1 + M^2) (2 + \eta^{\gamma} + \eta) \leq 8 (1 + M^2)$. It finishes the proof.
\end{proof}
\begin{lemma}\label{lem:8}
Let Assumption \ref{as:key} holds. Then for any $k, i \in \mathbb{N}_0$, $S \in \mathbb{N}$ (such that $i < S$): 
$$\mathbb{E}\big[\|\XX_{Sk+i}-\XX_{Sk}\|^2\big|\XX_{Sk}\big] \le  C' i\eta\big(1+\|\XX_{Sk}\|^2\big),$$
where $C' = 12 M^2 e^{(C+1) S \eta}$ and $C$ from Lemma \ref{lemma:R_bound}.
\end{lemma}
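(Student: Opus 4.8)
The plan is to write the window displacement $\XX_{Sk+i}-\XX_{Sk}$ as a sum of $i$ one-step increments and to bound its conditional second moment by running a Gr\"onwall-type recursion along the window. From \eqref{eq:main_chain}, for $0\le j<S$,
$$\XX_{Sk+j+1}-\XX_{Sk+j} = \eta\big(\f(\XX_{Sk+j})+\delta_{Sk+j}\big) + \sqrt{\eta^{1+\gamma}}\big(\sigma(\XX_{Sk+j})+\Delta_{Sk+j}\big)\epsilon_{Sk+j}(\XX_{Sk+j}).$$
Let $\mathcal{F}_m$ be the natural filtration and set $D_j:=\mathbb{E}\big[\|\XX_{Sk+j}-\XX_{Sk}\|^2\,\big|\,\XX_{Sk}\big]$, so $D_0=0$. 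Squaring the identity $\XX_{Sk+j+1}-\XX_{Sk}=(\XX_{Sk+j}-\XX_{Sk})+\eta(\f+\delta)+\sqrt{\eta^{1+\gamma}}(\sigma+\Delta)\epsilon$ and taking $\mathbb{E}[\,\cdot\,|\,\mathcal{F}_{Sk+j}]$, the two cross terms containing $\epsilon_{Sk+j}(\XX_{Sk+j})$ vanish: $\XX_{Sk+j}$, $\sigma(\XX_{Sk+j})$, $\delta_{Sk+j}$, $\Delta_{Sk+j}$ are $\mathcal{F}_{Sk+j}$-measurable and $\mathbb{E}[\epsilon_{Sk+j}(\cdot)\,|\,\mathcal{F}_{Sk+j}]=0_d$ by Assumption \ref{as:key}.

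The remaining terms are controlled by Assumption \ref{as:key} and the defining inequality for $M$. The inner product $2\eta\langle\XX_{Sk+j}-\XX_{Sk},\f+\delta\rangle$ I would split with Young's inequality as $\le\eta\|\XX_{Sk+j}-\XX_{Sk}\|^2+\eta\|\f+\delta\|^2$, which together with $\eta\le1$ and the elementary bounds of \eqref{eq:temp1} contributes $(1+\eta)\|\XX_{Sk+j}-\XX_{Sk}\|^2$ plus a term $\le 4\eta M^2(1+\|\XX_{Sk+j}\|^2)$; the noise term $\eta^{1+\gamma}\mathbb{E}\big[\|(\sigma+\Delta)\epsilon\|^2\,|\,\mathcal{F}_{Sk+j}\big]=\eta^{1+\gamma}\mathrm{Tr}\big((\sigma+\Delta)^2\big)\le 2\eta^{1+\gamma}\big(\mathrm{Tr}(\sigma^2)+\mathrm{Tr}(\Delta_{Sk+j}^2)\big)$ is, since $\eta\le1$ and $\gamma\ge0$ leave the factor $\eta^{\gamma}$ attached to $\mathrm{Tr}(\Delta_{Sk+j}^2)$ exactly as in Assumption \ref{as:key}, at most $2\eta M^2(1+\|\XX_{Sk+j}\|^2)$. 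Collecting, $\mathbb{E}\big[\|\XX_{Sk+j+1}-\XX_{Sk}\|^2\,|\,\mathcal{F}_{Sk+j}\big]\le(1+\eta)\|\XX_{Sk+j}-\XX_{Sk}\|^2+6\eta M^2(1+\|\XX_{Sk+j}\|^2)$. Taking $\mathbb{E}[\,\cdot\,|\,\XX_{Sk}]$ and using the conditional form of Lemma \ref{lemma:R_bound}, namely $\mathbb{E}[1+\|\XX_{Sk+j}\|^2\,|\,\XX_{Sk}]\le e^{Cj\eta}(1+\|\XX_{Sk}\|^2)$ — whose proof uses only the one-step recursion $\mathbb{E}[1+\|\XX_{m+1}\|^2\,|\,\mathcal{F}_m]\le(1+C\eta)(1+\|\XX_m\|^2)$ and therefore applies verbatim with $\XX_{Sk}$ in place of $x_0$ — gives $D_{j+1}\le(1+\eta)D_j+6\eta M^2 e^{Cj\eta}(1+\|\XX_{Sk}\|^2)$.

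Finally I would unroll this linear recursion from $D_0=0$: $D_i\le 6\eta M^2(1+\|\XX_{Sk}\|^2)\sum_{j=0}^{i-1}(1+\eta)^{i-1-j}e^{Cj\eta}\le 6M^2\,i\eta\,e^{(C+1)i\eta}(1+\|\XX_{Sk}\|^2)$, using $(1+\eta)^{i-1-j}\le e^{\eta i}$ and $e^{Cj\eta}\le e^{Ci\eta}$. Since $i<S$ we have $i\eta\le S\eta$, hence $e^{(C+1)i\eta}\le e^{(C+1)S\eta}$ and $D_i\le 6M^2 e^{(C+1)S\eta}\,i\eta\,(1+\|\XX_{Sk}\|^2)\le C'\,i\eta\,(1+\|\XX_{Sk}\|^2)$ with $C'=12M^2 e^{(C+1)S\eta}$, the spare factor $2$ leaving room for any loosening of constants above. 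One can equivalently avoid the recursion by splitting $\|\XX_{Sk+i}-\XX_{Sk}\|^2$ into drift and noise parts via $\|u+v\|^2\le2\|u\|^2+2\|v\|^2$, bounding the $i$-term drift sum by Cauchy--Schwarz and the noise sum by orthogonality of its martingale differences, and summing the per-step estimates; that route additionally needs the elementary fact $S\eta\le 3e^{S\eta}$ to absorb the $i^2\eta^2$ drift contribution into $i\eta$. There is no genuine obstacle here: the only care-points are keeping the filtration straight so the noise cross terms vanish, not losing the $\eta^{\gamma}$ factor on $\mathrm{Tr}(\Delta^2)$, and invoking the conditional version of Lemma \ref{lemma:R_bound}.
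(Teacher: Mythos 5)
Your proof is correct and follows essentially the same route as the paper's: one-step expansion of $\|\XX_{Sk+j+1}-\XX_{Sk}\|^2$, vanishing noise cross terms by unbiasedness, Young's inequality on the drift inner product, the conditional form of Lemma~\ref{lemma:R_bound} to control $\mathbb{E}[1+\|\XX_{Sk+j}\|^2\mid\XX_{Sk}]$, and unrolling the resulting linear recursion with $(1+\eta)^j\le e^{i\eta}$ to reach $C'=12M^2e^{(C+1)S\eta}$. The only difference is presentational (you make the filtration and the conditional restatement of Lemma~\ref{lemma:R_bound} explicit, which the paper leaves implicit), so no further comparison is needed.
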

\begin{proof}
Similarly to Lemma \ref{lemma:R_bound} we obtain:
\begin{align*}
    \|\XX_{Sk+i+1}- \XX_{Sk}\|^2 
    =& \|\XX_{Sk+i+1}-\XX_{Sk+i} + (\XX_{Sk+i} - \XX_{Sk})\|
    \\
    =& \|\XX_{Sk+i} - \XX_{Sk}\|^2 + \eta^2 \big\| \f\big(\XX_{Sk+i}\big) + \delta_{Sk+i}\big\|^2
    \\
   &+\eta^{1+\gamma} \Big\|\big(\sigma\big(\XX_{Sk+i}\big) + \Delta_{Sk+i}\big)\epsilon_{Sk+i}\big(\XX_k\big)\Big\|^2
   \\
   &+ 2\eta \langle \XX_{Sk+i} - \XX_{Sk}, \f\big(\XX_{Sk+i}\big) + \delta_{Sk+i}\rangle + \xi_{Sk+i},
\end{align*}
where $\xi_{Sk+i} = 2\sqrt{\eta^{1+\gamma}}\langle  \big(\sigma\big(\XX_{Sk+i}\big) + \Delta_{Sk+i}\big)\epsilon_{Sk+i}\big(\XX_{Sk+i}\big), \XX_{Sk+i} + \eta \big( \f\big(\XX_{Sk+i}\big) + \delta_{Sk+i}\big)\rangle$. With unbiasedness of $e_{Sk+i}(X_{Sk+i})$, we have that $\mathbb{E}[\xi_{Sk+i}| e_{Sk+i}(X_{Sk+i}) ] = 0$.
Now, using the bound \eqref{eq:temp1}, taking the conditional expectation and applying Lemma \ref{lemma:R_bound} for $\XX'_{i} = \XX_{Sk+i}, X'_{0} = \XX_{Sk}$, we deduce:
\begin{align*}
    \mathbb{E}\big[\|\XX_{Sk+i+1}- \XX_{Sk}\|^2 \big|\XX_{Sk}\big]
    =& \mathbb{E}\big[\|\XX_{Sk+i} - \XX_{Sk}\|^2\big|\XX_{Sk}\big] + \eta^2 \mathbb{E}\big[\big\| \f\big(\XX_{Sk+i}\big) + \delta_{Sk+i}\big\|^2\big|\XX_{Sk}\big]
    \\
   &+\eta^{1+\gamma} \mathbb{E}\big[\big\|\big(\sigma\big(\XX_{Sk+i}\big) + \Delta_{Sk+i}\big)\epsilon_{Sk+i}\big(\XX_k\big)\big\|^2 \big|\XX_{Sk}\big]
   \\
   &+ 2\eta \mathbb{E}\big[\langle \XX_{Sk+i} - \XX_{Sk}, \f\big(\XX_{Sk+i}\big) + \delta_{Sk+i}\rangle + \mathbb{E}\big[\xi_{Sk+i} \big|\XX_{Sk}\big]
   \\
   \leq&
   (1 + \eta)\mathbb{E}\big[\|\XX_{Sk+i} - \XX_{Sk}\|^2 + (\eta + \eta^2) \big\| \f\big(\XX_{Sk+i}\big) + \delta_{Sk+i}\big\|^2 \big|\XX_{Sk}\big]
    \\
   &+\eta^{1+\gamma} \mathbb{E}\big[\big\|\big(\sigma\big(\XX_{Sk+i}\big) + \Delta_{Sk+i}\big)\epsilon_{Sk+i}\big(\XX_{Sk+i}\big)\Big\|^2 \big|\XX_{Sk}\big]
   \\
   \leq&
   (1 + \eta) \mathbb{E}\big[\|\XX_{Sk+i} - \XX_{Sk}\|^2 \big|\XX_{Sk}\big] 
   \\
   &+ 2(\eta + \eta^{1+\gamma} + \eta^2) M^2 \left(1+\mathbb{E}\big[\|X_{Sk+i}\|^2 \big|\XX_{Sk}\big]\right)
   \\
   \leq&
   (1 + \eta) \mathbb{E}\big[\|\XX_{Sk+i} - \XX_{Sk}\|^2 \big|\XX_{Sk}\big] 
   \\
   &+ 2(\eta + \eta^{1+\gamma} + \eta^2) M^2 e^{C i\eta} \big(1+\|\XX_{Sk}\|^2\big).
\end{align*}
Recursively expanding the bound we obtain:
\begin{align*}
    \mathbb{E}\big[\|\XX_{Sk+i}-\XX_{Sk}\|^2\big|\XX_{Sk}\big] &\leq \sum_{j=0}^{i-1} (1+\eta)^{j} 2M^2e^{C i\eta}(\eta + \eta^{1+\gamma} + \eta^2) \big(1+\|\XX_{Sk}\|^2\big) \\
    &\le 2M^2(\eta + \eta^{1+\gamma} + \eta^2) e^{(C+1)i\eta} i\eta\big(1+\|\XX_{Sk}\|^2\big).
\end{align*}
With $i < S$ and $\eta \leq 1$, we get
\begin{align*}
    \mathbb{E}\big[\|\XX_{Sk+i}-\XX_{Sk}\|^2\big|\XX_{Sk}\big] 
    &\le 12 M^2 e^{(C+1)S\eta} i\eta\big(1+\|\XX_{Sk}\|^2\big).
\end{align*}
\end{proof}
\begin{remark}\label{rem:1}
In Lemma \ref{lemma:A2_main} and after we use that $S =  \eta^{-\frac{1-\beta}{2}}(1 - \chi_0) + \chi_0$, then $S\eta =  \eta^{\frac{1+\beta}{2}}(1 - \chi_0) + \chi_0 \eta \leq 1$, and the result of Lemma \ref{lem:8} can be rewritten immediately as 
$$\mathbb{E}\big[\|\XX_{Sk+i}-\XX_{Sk}\|^2\big|\XX_{Sk}\big] \le  C' i\eta\big(1+\|\XX_{Sk}\|^2\big),$$
where $C' = 12 M^2 e^{(C+1)}$ and $C =  8 (1 + M^2)$.
\end{remark}

% \begin{lemma}
% The following bound holds:
% \begin{align*}
% \E\Big[{\big{\|}}\XX_{S(k+1)}-\XX_{Sk}-\overline{\eta} b(\XX_{Sk}) - \sqrt{\overline{\eta}^{1+\gamma}}\sigma(\XX_{Sk})\frac{1}{\sqrt{S}}\sum_{i=0}^{S-1}\epsilon_{Sk + i}(\XX_{Sk}) {\big{\|}}^2\Big|\XX_{Sk}\Big] \\
% \le C' (\overline{\eta}^2 + \overline{\eta}^{1+\gamma})(1+\big{|}\XX_{Sk}\big{|}^2),
% \end{align*}
% where $C' = C'' e^{C \overline{\eta}} \le 2 C''e^{C} $.
% \end{lemma}

\vspace{-0.2cm}
\subsection{Proofs of Lemmas \ref{lemma:A2_main}-\ref{lemma:A4-A3}}
\vspace{-0.2cm}

\begin{lemma}[Lemma \ref{lemma:A2_main}] \label{lemma:A2} Let Assumption \ref{as:key} holds. If $L \ge 1 + 10M_0^2$, $\overline{\eta} L \leq 1$ and $S\eta \leq 1$, then for any $t > 0$:
$${\Delta}^{S}_{t} = \max_{\overline{{\eta}} k' \le t} \frac{\E\big{\|}\XX_{S k'} - \DD_{{\overline{{\eta}} k'}}{\big{\|}}^2}{R^2(t)} = C''\left( {\eta}^{2\alpha}+\frac{{\eta}^{\gamma+\beta}}{S}(1-\chi_{0}) + \overline{{\eta}}\right),$$
where $R^2(t) \ge \max\left\{ 1;\max_{{\eta} k \le t} \mathbb{E}\| X_{k} \|^2\right\}$ and \\ $C'' = 2\left( 3M_{1}^2 e^{C} + 3 M_0^2 C' + 4 M_{1}^2 e^{C} +  48 M_0^2 M^2 e^{C+1} + 4 d \sigma^2_1 M_e^2 \right)$ where the constants $C, C'$ are defined in Lemmas \ref{lem:8} and \ref{lemma:R_bound}) respectively.
% Let Assumption \ref{as:key} holds. If $S\eta \leq 1$, then for any $t > 0$:
% $${\Delta}^{S}_{t} = \max_{\overline{{\eta}} k' \le t} \frac{\E\big{\|}\XX_{S k'} - \DD_{{\overline{{\eta}} k'}}{\big{\|}}^2}{R^2(t)} \le \big(7M_{1}^2 \eta^{2\alpha} + 51M^2 M_{0}^2 e^{C+4}\overline{\eta}+ 4d \sigma_{1}^2 M_{\epsilon}^2 \frac{\eta^{\beta+\gamma}}{S}\big)e^{C},$$
% where $R^2(t) = \max\left\{ 1;\max_{{\eta} k \le t} \mathbb{E}\| X_{k} \|^2\right\} \le e^{Ct}(1+\|\x_0\|^2)$ for $L \ge 2 + (6+4\eta^{\gamma})M_{0}^2$.
% and $C_1 = M^2 + dM_{\epsilon}^2\sigma_1^2\eta ^{\gamma+\beta}$.
\end{lemma}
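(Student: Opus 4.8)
The plan is to collapse the two evolutions into a scalar linear recursion for $r_{k}:=\mathbb{E}\big\|\DD_{\overline{\eta}k}-\XX_{Sk}\big\|^{2}/R^{2}(t)$ along the coarse grid $\{\overline{\eta}k\}$ (with $\overline{\eta}=S\eta$), show it has the form $r_{k+1}\le(1-\overline{\eta})r_{k}+\overline{\eta}\,\mathcal{E}$ with $\mathcal{E}=\mathcal{O}\big(\eta^{2\alpha}+\tfrac{\eta^{\gamma+\beta}}{S}(1-\chi_{0})+\overline{\eta}\big)$, and conclude by the elementary iteration bound recalled in the Preliminaries (which turns $r_{k+1}\le(1-\eta\sigma_{1})r_{k}+\eta\sigma_{2}$, $r_{0}=0$, into $r_{k}\le\sigma_{2}/\sigma_{1}$; here $\sigma_{1}=1$). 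To derive the recursion, iterate \eqref{eq:main_chain} over a block of $S$ steps, $\XX_{S(k+1)}=\XX_{Sk}+\sum_{i=0}^{S-1}\big(\eta(b(\XX_{Sk+i})+\delta_{Sk+i})+\sqrt{\eta^{1+\gamma}}(\sigma(\XX_{Sk+i})+\Delta_{Sk+i})\epsilon_{Sk+i}(\XX_{Sk+i})\big)$, and subtract \eqref{eq:coupled_v2}; using $\overline{\eta}b(\XX_{Sk})=\eta\sum_{i}b(\XX_{Sk})$ and $\sqrt{\overline{\eta}\eta^{\gamma}}=\sqrt{S}\sqrt{\eta^{1+\gamma}}$, and writing $E_{k}:=\DD_{\overline{\eta}k}-\XX_{Sk}$, the window-coupling term produces the contraction factor and
$$E_{k+1}=(1-L\overline{\eta})E_{k}+\overline{\eta}\big(b(\DD_{\overline{\eta}k})-b(\XX_{Sk})\big)+\eta\sum_{i=0}^{S-1}\big(b(\XX_{Sk})-b(\XX_{Sk+i})\big)-\eta\sum_{i=0}^{S-1}\delta_{Sk+i}+T_{k},$$
where, inserting $\pm\sigma(\XX_{Sk})$ and using $\epsilon^{S}_{k}(\XX_{Sk})=\sum_{i}\epsilon_{Sk+i}(\XX_{Sk})$, the noise defect is $T_{k}=\sqrt{\eta^{1+\gamma}}\big(\sigma(\DD_{\overline{\eta}k})[\sqrt{S}\zeta^{S}_{k}-\epsilon^{S}_{k}(\XX_{Sk})]+\sum_{i}[\sigma(\DD_{\overline{\eta}k})-\sigma(\XX_{Sk})]\epsilon_{Sk+i}(\XX_{Sk})+\sum_{i}[\sigma(\XX_{Sk})\epsilon_{Sk+i}(\XX_{Sk})-\sigma(\XX_{Sk+i})\epsilon_{Sk+i}(\XX_{Sk+i})]-\sum_{i}\Delta_{Sk+i}\epsilon_{Sk+i}(\XX_{Sk+i})\big)$.

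\textbf{Estimating the pieces.} Square $E_{k+1}$, condition on $\mathcal{F}_{Sk}$ and take expectation. Unbiasedness of $\epsilon$ (Assumption \ref{as:key}) gives $\mathbb{E}[T_{k}\mid\mathcal{F}_{Sk}]=0$, so its cross-product with the $\mathcal{F}_{Sk}$-measurable $(1-L\overline{\eta})E_{k}$ vanishes; the non-centred $\delta$- and drift-discretization terms, and all cross-products among the error pieces, are split by Young's inequality with a parameter of order $\overline{\eta}$. The coefficient of $\mathbb{E}\|E_{k}\|^{2}$ is then $(1-L\overline{\eta})^{2}\le1-L\overline{\eta}$ (using $\overline{\eta}L\le1$) plus genuinely expanding $\mathcal{O}(\overline{\eta})$ contributions — the non-monotone drift cross-term $\sim M_{0}\overline{\eta}$ and the Lipschitz-$\sigma$ part of $T_{k}$, whose apparent factor $S$ collapses through $S\eta^{1+\gamma}=\overline{\eta}\eta^{\gamma}\le\overline{\eta}$ — which are dominated once $L\ge1+10M_{0}^{2}$, leaving net coefficient $\le1-\overline{\eta}$. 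Each $E_{k}$-free contribution is bounded via Assumption \ref{as:key} together with Lemmas \ref{lemma:R_bound} and \ref{lem:8}: the bias and covariance-shift terms (the latter using $\mathbb{E}\|\Delta_{Sk+i}\epsilon_{Sk+i}\|^{2}=\mathbb{E}\,\Tr(\Delta_{Sk+i}^{2})\le\eta^{-\gamma}M_{1}^{2}\eta^{2\alpha}(1+\|\XX_{Sk+i}\|^{2})$) give $\mathcal{O}(\overline{\eta}\,\eta^{2\alpha})(1+\mathbb{E}\|\XX_{Sk}\|^{2})$; the displacement and drift-discretization terms, via $\mathbb{E}\|\XX_{Sk+i}-\XX_{Sk}\|^{2}\le C'i\eta(1+\|\XX_{Sk}\|^{2})$, give $\mathcal{O}(\overline{\eta}^{2})(1+\mathbb{E}\|\XX_{Sk}\|^{2})$; and the CLT-coupling term gives $\eta^{1+\gamma}\sigma_{1}^{2}\,\mathbb{E}\|\sqrt{S}\zeta^{S}_{k}-\epsilon^{S}_{k}(\XX_{Sk})\|^{2}\le\eta^{1+\gamma}\sigma_{1}^{2}M_{\epsilon}^{2}\eta^{\beta}=\overline{\eta}\cdot\tfrac{\eta^{\gamma+\beta}}{S}\sigma_{1}^{2}M_{\epsilon}^{2}$, the conditional bound $\mathbb{E}[\|\sqrt{S}\zeta^{S}_{k}-\epsilon^{S}_{k}(\XX_{Sk})\|^{2}\mid\mathcal{F}_{Sk}]\le M_{\epsilon}^{2}\eta^{\beta}$ being precisely the optimality of $\Pi_{*}$ combined with the CLT clause of Assumption \ref{as:key} (and $M_{\epsilon}=0$ in the Gaussian case gives the factor $1-\chi_{0}$). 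Dividing through by $R^{2}(t)$ and using $S\eta\le1$ with Lemma \ref{lemma:R_bound} to replace every $1+\mathbb{E}\|\XX_{m}\|^{2}$ (over the finitely many $m<Sk'\le t/\eta$ involved) by a constant multiple of $R^{2}(t)$ yields $r_{k+1}\le(1-\overline{\eta})r_{k}+\overline{\eta}\,\mathcal{E}$ with the asserted $\mathcal{E}$; since $r_{0}=0$, the iteration lemma gives $r_{k}\le\mathcal{E}$ for all $k$, hence $\Delta^{S}_{t}\le\mathcal{E}$, and gathering the constants $M_{1}^{2}e^{C}$ (bias and shift), $M_{0}^{2}C'$ (displacement, with $C'=12M^{2}e^{C+1}$) and $d\sigma_{1}^{2}M_{\epsilon}^{2}$ (coupling) produces the stated $C''$.

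\textbf{Main obstacle.} The crux — and the source of the new difficulty relative to the dissipative-case analyses — is that no monotonicity/convexity/dissipativity of $(-b)$ is available, so the contraction must be synthesized purely from the artificial term $-L\overline{\eta}(\DD_{\overline{\eta}k}-\XX_{Sk})$. This forces a tight balance: $L$ must be large enough ($L\ge1+10M_{0}^{2}$) to overpower the genuinely expanding Lipschitz-drift and Lipschitz-$\sigma$ cross-terms, yet small relative to $\overline{\eta}$ ($\overline{\eta}L\le1$) so that $(1-L\overline{\eta})\in[0,1)$; and, because the bare second moment $\mathbb{E}\|\XX_{Sk}\|^{2}$ may diverge with $k$ in the non-dissipative regime, every error term — each carrying a multiplicative $(1+\|\XX_{Sk}\|^{2})$ from the bias, shift, and displacement estimates — only becomes usable after normalizing the whole inequality by $R^{2}(t)$. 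A secondary but fiddly point is the coupling bookkeeping: the Gaussian surrogate $\zeta^{S}_{k}$ is tied to $\epsilon^{S}_{k}(\XX_{Sk})$ with the noise frozen at $\XX_{Sk}$, whereas the true block increments use $\epsilon_{Sk+i}(\XX_{Sk+i})$, so bridging them needs the one-step displacement estimate of Lemma \ref{lem:8} together with the $M_{0}$-Lipschitz control of $x\mapsto\sigma(x)\epsilon_{k}(x)$, and carrying the $S$-dependence through $S\eta^{1+\gamma}=\overline{\eta}\eta^{\gamma}$ is exactly what sharpens a naive $\eta^{\gamma+\beta}$ to $\tfrac{\eta^{\gamma+\beta}}{S}$.
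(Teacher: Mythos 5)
Your proposal is correct and follows essentially the same route as the paper's proof: the same window-coupled one-step recursion with the contraction synthesized from the $-L\overline{\eta}(\DD_{\overline{\eta}k}-\XX_{Sk})$ term, the same four-way splitting of the noise defect (CLT coupling via the optimal coupling $\zeta^S_k$, the $\sigma$-difference between $\DD_{\overline{\eta}k}$ and $\XX_{Sk}$, the within-block displacement handled by Lemma \ref{lem:8}, and the covariance shift via $\mathbb{E}\|\Delta\epsilon\|^2=\Tr(\Delta^2)$), and the same normalization by $R^2(t)$ before invoking the elementary iteration bound. The one step you pass over silently is that controlling $\mathbb{E}\big\|[\sigma(\DD_{\overline{\eta}k})-\sigma(\XX_{Sk})]\epsilon^S_k(\XX_{Sk})\big\|^2$ needs Lipschitzness of $\sigma$ alone in Frobenius norm, which is not a stated bullet of Assumption \ref{as:key} and must be extracted from the assumed bound on $\mathbb{E}\|\sigma(x)\epsilon_k(x)-\sigma(x')\epsilon_k(x')\|^2$ by the polarization identity the paper works out; apart from that (and a looseness you share with the paper: with $L$ exactly $1+10M_0^2$ the net coefficient is only guaranteed to be $\le 1$, so one really wants $L$ slightly larger, e.g.\ $L\ge 2+10M_0^2$, for the telescoping to produce the stated $C''$), your argument matches the paper's.
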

\begin{proof}
% Consider the following expansions:
%     \begin{align*} 
%     \XX_{S(k+1)} &= \XX_{Sk}+ \overline{\eta} \frac{1}{S}\sum_{i=0}^{S-1}\big(b(\XX_{Sk+i})+\delta_{Sk+i}) + \sqrt{\overline{\eta}\eta^{\gamma}}\frac{1}{\sqrt{S}}\sum_{i=0}^{S-1}\big(\sigma(\XX_{Sk+i})+\Delta_{Sk+i}\big)\epsilon_{Sk + i}(\XX_{Sk+i})\\
%     \DD_{\overline{\eta}(k+1)} &= \DD_{\overline{\eta}k}+ \overline{\eta} \frac{1}{S}\sum_{i=0}^{S-1}b(\DD_{\overline{\eta}k}) + \sqrt{\overline{\eta}\eta^{\gamma}}\sigma(\DD_{\overline{\eta}k})\zeta^{S}_{k} (\XX_{Sk}) - L\overline{\eta}(\DD_{\overline{\eta}k} - \XX_{Sk})
%     \end{align*}
Let us consider the difference between $\DD_{\overline{\eta}(k+1)}$ and $\XX_{S(k+1)}$, square it, take the expectation (conditional on $\XX_{Sk}$) and get:
\begin{align}
    \label{eq:temp5}
    \mathbb{E}\big[\|\DD_{\overline{\eta}(k+1)} &- \XX_{S(k+1)}\|^2 \big{|} \XX_{Sk}\big] 
    \notag\\
    =& (1-L\overline{\eta})^2\mathbb{E}\Big[\|\DD_{\overline{\eta}k} - \XX_{Sk}\|^2\Big| X_{Sk}\Big] + \frac{\overline{\eta}^2}{S^2} \mathbb{E}\Big[\Big\|\sum_{i=0}^{S-1}\big(b(\XX_{Sk+i})-b(\DD_{\overline{\eta}k})+\delta_{Sk+i})\big)\Big\|^2 \Big{|} \XX_{Sk}\Big]  
    \notag\\
    & + 2(1 - L\overline{\eta})\overline{\eta}\sum_{i=0}^{S-1}\mathbb{E} \big[\langle \DD_{\overline{\eta}k} - \XX_{Sk}, \frac{1}{S}(b(\XX_{Sk+i})-b(\DD_{\overline{\eta}k})+\delta_{Sk+i})\rangle \big{|} \XX_{Sk}\big] 
    \notag\\
    & + \overline{\eta}\eta^{\gamma}\mathbb{E}\Big[\Big\|\frac{1}{\sqrt{S}}\sum_{i=0}^{S-1}\big(\sigma(\XX_{Sk+i})+\Delta_{Sk+i}\big)\epsilon_{Sk + i}(\XX_{Sk+i}) - \sigma(\DD_{\overline{\eta}k})\zeta^{S}_{k}(\XX_{Sk})\Big\|^2\Big{|} \XX_{Sk}\Big] 
    \notag\\
    \le&  \big((1-L\overline{\eta})^2+\overline{\eta}(1-L\overline{\eta})\big)\mathbb{E}\big[\|\DD_{\overline{\eta}k} - \XX_{Sk}\|^2 \big{|} \XX_{Sk} \big] 
    \notag\\
    &+ \frac{\overline{\eta}^2+\overline{\eta}(1-L\overline{\eta})}{S^2} \mathbb{E}\Big[\Big\|\sum_{i=0}^{S-1}\big(b(\XX_{Sk+i})-b(\DD_{\overline{\eta}k})+\delta_{Sk+i})\big)\Big\|^2\Big{|} \XX_{Sk} \Big] 
    \notag\\
    & + \overline{\eta}\eta^{\gamma}\mathbb{E}\Big[\Big\|\frac{1}{\sqrt{S}}\sum_{i=0}^{S-1}\big(\sigma(\XX_{Sk+i})+\Delta_{Sk+i}\big)\epsilon_{Sk + i}(\XX_{Sk+i}) -  \sigma(\DD_{\overline{\eta}k})\zeta^{S}_{k}(\XX_{Sk})\Big\|^2\Big{|} \XX_{Sk}\Big].
\end{align}
In the first step, we also used the unbiasedness of $\epsilon_{Sk + i}(\XX_{Sk+i})$ (in the same way as in Lemmas \ref{lemma:R_bound}, \ref{lem:8}), this makes all inner products with $\epsilon_{Sk + i}(\XX_{Sk+i})$ equal to 0 in the expectation. In the second step, we use that $\overline{\eta} L \leq 1$.
We start with bounding the last term. One can make the following estimate with Assumption \ref{as:key} and Lemmas \ref{lemma:R_bound}, \ref{lem:8}:
\begin{align*}
    \mathbb{E}\Big[\Big\|\frac{1}{\sqrt{S}}\sum_{i=0}^{S-1}&\big(\sigma(\XX_{Sk+i})+\Delta_{Sk+i}\big)\epsilon_{Sk + i}(\XX_{Sk+i}) - \sigma(\DD_{\overline{\eta} k})\zeta^{S}_{k}(\XX_{Sk})\Big\|^2 \Big{|} \XX_{Sk}\Big]
    \\
    \le& 
    4\mathbb{E}\Big[\Big\|\frac{1}{\sqrt{S}}\sum_{i=0}^{S-1}\Delta_{Sk+i}\epsilon_{Sk + i}(\XX_{Sk+i})\Big\|^2\Big{|} \XX_{Sk}\Big] \\
    &+ 
    \\
    &+ 4\mathbb{E}\Big[\Big\|\sigma(\DD_{\overline{\eta} k})\big(\frac{1}{\sqrt{S}}\sum_{i=0}^{S-1}\epsilon_{Sk + i}(\XX_{Sk}) - \zeta^{S}_{k}(\XX_{Sk})\big)\Big\|^2\Big{|} \XX_{Sk}\Big]
    \\
    &+ 4\mathbb{E}\Big[\Big\|\big(\sigma(\XX_{Sk}) - \sigma(\DD_{\overline{\eta}k})\big) \zeta^{S}_{k}(\XX_{Sk}) \Big\|^2\Big{|} \XX_{Sk}\Big].
\end{align*}
Next, we use that for $i < j$: $\mathbb{E}\big[\langle \epsilon_{Sk + i}(\XX_{Sk+i}) , \epsilon_{Sk + j}(\XX_{Sk+j})\rangle \big{|} \XX_{Sk}\big] =$ 
\\
$ \mathbb{E}\big[\langle \epsilon_{Sk + i}(\XX_{Sk+i}) , \mathbb{E}_{\epsilon_{Sk + j}}\big[\epsilon_{Sk + j}(\XX_{Sk+j}) \big] \rangle \big{|} \XX_{Sk}\big] = 0$, and get
\begin{align*}
    \mathbb{E}\Big[\Big\|\frac{1}{\sqrt{S}}\sum_{i=0}^{S-1}&\big(\sigma(\XX_{Sk+i})+\Delta_{Sk+i}\big)\epsilon_{Sk + i}(\XX_{Sk+i}) - \zeta^{S}_{k}(\XX_{Sk})\Big\|^2 \Big{|} \XX_{Sk}\Big]
    \\
    \le& 
    \frac{4}{S} \sum_{i=0}^{S-1} \mathbb{E}\Big[\Big\|\Delta_{Sk+i}\epsilon_{Sk + i}(\XX_{Sk+i})\Big\|^2\Big{|} \XX_{Sk}\Big] \\
    &+ \frac{4}{S}\sum_{i=0}^{S-1} \mathbb{E}\Big[\Big\|\big(\sigma(\XX_{Sk+i})\epsilon_{Sk + i}(\XX_{Sk+i}) -\sigma(\XX_{Sk})\epsilon_{Sk + i}(\XX_{Sk})\big)\Big\|^2\Big{|} \XX_{Sk}\Big] \\
    &+ 4\mathbb{E}\Big[\Big\| \sigma(\XX_{Sk})\big(\frac{1}{\sqrt{S}}\sum_{i=0}^{S-1} \epsilon_{Sk + i}(\XX_{Sk}) - \zeta^{S}_{k}(\XX_{Sk})\big)\Big\|^2\Big{|} \XX_{Sk}\Big]\\
    &+ 4\mathbb{E}\Big[\Big\|\big(\sigma(\XX_{Sk})-\sigma(\DD_{\overline{\eta}k})\big)\zeta^{S}_{k}(\XX_{Sk})\big)\Big\|^2\Big{|} \XX_{Sk}\Big].
\end{align*}
Assumption \ref{as:key} gives
\begin{align*}
    \mathbb{E}\Big[\Big\|\frac{1}{\sqrt{S}}\sum_{i=0}^{S-1}\big(\sigma(\XX_{Sk+i})+  \Delta_{Sk+i}\big) & \epsilon_{Sk + i}(\XX_{Sk+i}) - \sigma(\DD_{\overline{\eta}k}) \zeta^{S}_{k}(\XX_{Sk})\Big\|^2 \Big{|} \XX_{Sk}\Big]
    \\   
    \le& 
    \frac{4}{S}\sum_{i=0}^{S-1} M_{1}^2 \eta^{2\alpha - \gamma}  \big(1+ \mathbb{E}\big[{\big{\|}}\XX_{Sk+i} {\big{\|}}^{2} \big{|} \XX_{Sk}\big]\big) \\
    &+ \frac{4}{S}\sum_{i=0}^{S-1}M_0^2 \mathbb{E}\big[\|\XX_{Sk+i}- \XX_{Sk}\|^2 \big{|} \XX_{Sk}\big] \\
    &+ 4 d \sigma^2_1 \mathbb{E}\Big[\Big\|\frac{1}{\sqrt{S}}\sum_{i=0}^{S-1} \epsilon_{Sk + i}(\XX_{Sk}) - \zeta^{S}_{k}(\XX_{Sk})\Big\|^2\Big{|} \XX_{Sk}\Big] \\
    &+ 4 M_{0}^2 \Big\|\XX_{Sk}-\DD_{\overline{\eta}k}\Big\|^2,
\end{align*}
where for the last term, we used the following chain of identifies derived from Assumption \ref{as:key} and noting that $\mathbb{E}\langle \sigma(x) (\epsilon_{k}(x) - \epsilon_{k}(x')), \sigma(x) (\epsilon_{k}(x) + \epsilon_{k}(x'))\rangle = \mathbb{E}\langle \sigma(x') (\epsilon_{k}(x) - \epsilon_{k}(x')), \sigma(x') (\epsilon_{k}(x) + \epsilon_{k}(x'))\rangle = 0$, $\mathbb{E} \big\| (\sigma(x) + \sigma(x'))(\epsilon_{k}(x) - \epsilon_{k}(x'))\|^2 \ge \mathbb{E} \big\| (\sigma(x) - \sigma(x'))(\epsilon_{k}(x) - \epsilon_{k}(x'))\|^2$: 
\begin{align*}
    M_{0}^2 \|x-x'\|^2 &\ge \mathbb{E}\|\sigma(x) \epsilon_{k}(x) - \sigma(x') \epsilon_{k}(x')\|^2 \\
    = &  \frac{1}{4} \mathbb{E}\Big\| \big(\sigma(x) + \sigma(x'))(\epsilon_{k}(x) - \epsilon_{k}(x')) + \big(\sigma(x) - \sigma(x'))(\epsilon_{k}(x) + \epsilon_{k}(x'))\Big\|^2\\
    = &  \frac{1}{4} \mathbb{E}\Big\| \big(\sigma(x) + \sigma(x'))(\epsilon_{k}(x) - \epsilon_{k}(x'))\Big\|^2 + \frac{1}{4}\mathbb{E}\|\big(\sigma(x) - \sigma(x'))(\epsilon_{k}(x) + \epsilon_{k}(x'))\Big\|^2 \\
    & + \frac{1}{2}\mathbb{E}\Big\langle \big(\sigma^2(x)
    - \sigma^2(x')\big) (\epsilon_{k}(x) - \epsilon_{k}(x')), \epsilon_{k}(x) - \epsilon_{k}(x')\Big\rangle \\
    = & \frac{1}{4} \mathbb{E}\Big\| \big(\sigma(x) + \sigma(x')\big)(\epsilon_{k}(x) - \epsilon_{k}(x'))\Big\|^2 + \frac{1}{4}\mathbb{E}\|\big(\sigma(x) - \sigma(x'))(\epsilon_{k}(x) + \epsilon_{k}(x'))\Big\|^2 \\
    & + \frac{1}{2}\mathbb{E}\Big\langle \sigma(x) (\epsilon_{k}(x) - \epsilon_{k}(x')), \sigma(x)(\epsilon_{k}(x) - \epsilon_{k}(x')\Big\rangle \\
    & - \frac{1}{2}\mathbb{E}\Big\langle \sigma(x') (\epsilon_{k}(x) - \epsilon_{k}(x')), \sigma(x)(\epsilon_{k}(x) - \epsilon_{k}(x')\Big\rangle\\
    = & \frac{1}{4} \mathbb{E}\Big\| \big(\sigma(x) + \sigma(x')\big)(\epsilon_{k}(x) - \epsilon_{k}(x'))\Big\|^2 + \frac{1}{4}\mathbb{E}\|\big(\sigma(x) - \sigma(x'))(\epsilon_{k}(x) + \epsilon_{k}(x'))\Big\|^2 \\
    \ge & \frac{1}{4} \mathbb{E}\Big\| \big(\sigma(x) - \sigma(x')\big)(\epsilon_{k}(x) - \epsilon_{k}(x'))\Big\|^2 + \frac{1}{4}\mathbb{E}\|\big(\sigma(x) - \sigma(x'))(\epsilon_{k}(x) + \epsilon_{k}(x'))\Big\|^2 \\ 
    \ge & \frac{1}{2} \mathbb{E} \big\|(\sigma(x) - \sigma(x'))\epsilon_{k}(x)\big\|^2 + \frac{1}{2} \mathbb{E} \big\|(\sigma(x) - \sigma(x'))\epsilon_{k}(x')\big\|^2 \\
    &\ge \|\sigma(x) - \sigma(x')\|_{F}^2
\end{align*}
Taking into account that $\zeta^{S}_{k}(\XX_{Sk})$ is an optimal coupling (Section \ref{sec:ca_wc}), we can estimate $\big\|\frac{1}{\sqrt{S}}\sum_{i=0}^{S-1} \epsilon_{Sk + i}(\XX_{Sk}) - \zeta^{S}_{k}(\XX_{Sk})\big\|^2$ by Assumption \ref{as:key}. 
\begin{align*}
    \mathbb{E}\Big[\Big\|\frac{1}{\sqrt{S}}\sum_{i=0}^{S-1}\big(\sigma(\XX_{Sk+i})+  \Delta_{Sk+i}\big) & \epsilon_{Sk + i}(\XX_{Sk+i}) - \zeta^{S}_{k}(\XX_{Sk})\Big\|^2 \Big{|} \XX_{Sk}\Big]
    \\   
    \le& 
    \frac{4}{S}\sum_{i=0}^{S-1} M_{1}^2 \eta^{2\alpha - \gamma}  \big(1+ \mathbb{E}\big[{\big{\|}}\XX_{Sk+i} {\big{\|}}^{2} \big{|} \XX_{Sk}\big]\big) \\
    &+ \frac{4}{S}\sum_{i=0}^{S-1}M_0^2 \mathbb{E}\big[\|\XX_{Sk+i}- \XX_{Sk}\|^2 \big{|} \XX_{Sk}\big] \\
    &+ \frac{4}{S} d \sigma^2_1 M_e^2 \eta^{\beta} + 4 M_{0}^2 \Big\|\XX_{Sk}-\DD_{\overline{\eta}k}\Big\|^2.
\end{align*}
With Lemmas \ref{lemma:R_bound}, \ref{lem:8}, one can obtain
\begin{align*}
    \mathbb{E}\Big[\Big\|\frac{1}{\sqrt{S}} & \sum_{i=0}^{S-1}\big(\sigma(\XX_{Sk+i})+  \Delta_{Sk+i}\big) \epsilon_{Sk + i}(\XX_{Sk+i}) - \zeta^{S}_{k}(\XX_{Sk})\Big\|^2 \Big{|} \XX_{Sk}\Big]
    \\   
    \le& 
    \frac{4}{S}\sum_{i=0}^{S-1} M_{1}^2 \eta^{2\alpha - \gamma} e^{C i\eta} \big(1+\|\XX_{Sk}\|^2\big)+ \frac{4}{S}\sum_{i=0}^{S-1}M_0^2  \cdot 12 M^2 e^{(C+1)S\eta} i\eta\big(1+\|\XX_{Sk}\|^2\big) \\
    &+ \frac{4}{S} d \sigma^2_1 M_e^2 \eta^{\beta} + 4 M_{0}^2 \mathbb{E}\Big[\Big\|\XX_{Sk}-\DD_{\overline{\eta}k}\Big\|^2\Big{|} \XX_{Sk}\Big]
    \\
    \le& 
    4 M_{1}^2 \eta^{2\alpha - \gamma} e^{C S\eta} \big(1+\|\XX_{Sk}\|^2\big)+ 48 M_0^2 M^2 e^{(C+1)S\eta} S\eta\big(1+\|\XX_{Sk}\|^2\big) \\
    &+ \frac{4 d \sigma^2_1 M_e^2 \eta^{\beta} }{S}  + 4 M_{0}^2 \mathbb{E}\Big[\Big\|\XX_{Sk}-\DD_{\overline{\eta}k}\Big\|^2\Big{|} \XX_{Sk}\Big]
    \\
    \le&  \left( 4 M_{1}^2 e^{C S\eta} +  48 M_0^2 M^2 e^{(C+1)S\eta}  + 4 d \sigma^2_1 M_e^2\right) \left( \eta^{2\alpha - \gamma} + S\eta + \frac{\eta^{\beta} (1 - \chi_0)}{S}\right)  \big(1+\|\XX_{Sk}\|^2\big) 
    \\
    &+ 4 M_{0}^2 \Big\|\XX_{Sk}-\DD_{\overline{\eta}k}\Big\|^2.
\end{align*}
Here we also used that $M_e =0$ if $\chi_0 = 0$. Taking into account $S\eta \leq 1$, we can deal with  $e^{C S\eta} \leq e^C$:
\begin{align}
    \label{eq:temp10}
    \mathbb{E}\Big[\Big\|\frac{1}{\sqrt{S}} & \sum_{i=0}^{S-1}\big(\sigma(\XX_{Sk+i})+  \Delta_{Sk+i}\big) \epsilon_{Sk + i}(\XX_{Sk+i}) - \zeta^{S}_{k}(\XX_{Sk})\Big\|^2 \Big{|} \XX_{Sk}\Big]
    \notag\\   
    \le&  \left( 4 M_{1}^2 e^{C} +  48 M_0^2 M^2 e^{(C+1)} + 4 d \sigma^2_1 M_e^2 \right) \left( \eta^{2\alpha - \gamma} + \overline{\eta} + \frac{\eta^{\beta} (1 - \chi_0)}{S}\right)  \big(1+\|\XX_{Sk}\|^2\big)
    \notag\\
    &+ 4 M_{0}^2 \Big\|\XX_{Sk}-\DD_{\overline{\eta}k}\Big\|^2.
\end{align}
For the drift related terms we also use Assumption \ref{as:key} and then Lemmas \ref{lemma:R_bound}, \ref{lem:8}:
\begin{align*}
    \mathbb{E}\Big[\Big\|\sum_{i=0}^{S-1}&\big(b(\XX_{Sk+i})-b(\DD_{\overline{\eta}k})+\delta_{Sk+i})\big)\Big\|^2 \Big{|} \XX_{Sk}\Big]
    \\
    \le& S \sum_{i=0}^{S-1}\mathbb{E}\Big[\Big\|b(\XX_{Sk+i})-b(\DD_{\overline{\eta}k})+\delta_{Sk+i})\Big\|^2\Big{|} \XX_{Sk}\Big] \\
    \le& 
    S\sum_{i=0}^{S-1}\mathbb{E}\Big[3\|\delta_{Sk+i}\|^2 + 3\|b(\XX_{Sk+i})- b(\XX_{Sk})\|^2 + 3\|b(\XX_{Sk})-b(\DD_{\overline{\eta}k})\|^2 \Big{|} \XX_{Sk}\Big]\\
    \le& 
    S\sum_{i=0}^{S-1}\big(3M_{1}^2 \eta^{2\alpha}  \big(1+ \mathbb{E}\big[{\big{\|}}\XX_{Sk+i} {\big{\|}}^{2} \big{|} \XX_{Sk}\big]\big) + 3 M_0^2 \mathbb{E}\big[\|\XX_{Sk+i}- \XX_{Sk}\|^2 \big{|} \XX_{Sk}\big] \big)
    \\
    &+ 3 S^2 M_0^2\|\XX_{Sk}- \DD_{\overline{\eta}k}\|^2 \\
    \le& 
    S\sum_{i=0}^{S-1}\big(3M_{1}^2 \eta^{2\alpha}  e^{C i\eta} \big(1+\|\XX_{Sk}\|^2\big) + 3 M_0^2 C' i\eta\big(1+\|\XX_{Sk}\|^2\big) \big)
    \\
    &+ 3 S^2 M_0^2\|\XX_{Sk}- \DD_{\overline{\eta}k}\|^2 \\
    \le& 
    S^2\big(3M_{1}^2 \eta^{2\alpha} e^{C S\eta} + 3 M_0^2 C' \overline{\eta} \big)(1+\|\XX_{Sk}\|^2) + 3 S^2 M_{0}^2\mathbb{E}\|\DD_{\overline{\eta}k}-\XX_{Sk}\|^2.
\end{align*}
Taking into account $S\eta \leq 1$, we can deal with $e^{C S\eta} \leq e^C$:
\begin{align}
    \label{eq:temp2}
    \mathbb{E}\Big[\Big\|\sum_{i=0}^{S-1}&\big(b(\XX_{Sk+i})-b(\DD_{\overline{\eta}k})+\delta_{Sk+i})\big)\Big\|^2 \Big{|} \XX_{Sk}\Big]
    \notag\\
    \le& 
    S^2\big(3M_{1}^2 \eta^{2\alpha} e^{C} + 3 M_0^2 C' \overline{\eta} \big)(1+\|\XX_{Sk}\|^2) + 3 S^2 M_{0}^2\mathbb{E}\|\DD_{\overline{\eta}k}-\XX_{Sk}\|^2.
\end{align}
Combining \eqref{eq:temp5} with \eqref{eq:temp10} and \eqref{eq:temp2}, we obtain:
\begin{align*}
    \mathbb{E}\|\DD_{\overline{\eta}(k+1)} &  - \XX_{S(k+1)}\|^2 \big{|} \XX_{Sk} \big] 
    \\
    \le& \big(1- 2 L\overline{\eta} +  L^2\overline{\eta}^2+\overline{\eta} -L\overline{\eta}^2\big)\|\DD_{\overline{\eta}k} - \XX_{Sk}\|^2
    \notag\\
    &+ (\overline{\eta}^2+\overline{\eta}(1-L\overline{\eta}) ) \big(3M_{1}^2 \eta^{2\alpha} e^{C} + 3 M_0^2 C' \overline{\eta} \big)(1+\|\XX_{Sk}\|^2) 
    \notag\\
    &+ (\overline{\eta}^2+\overline{\eta}(1-L\overline{\eta}) ) \cdot 
    3 M_{0}^2\mathbb{E}\|\DD_{\overline{\eta}k}-\XX_{Sk}\|^2 
    \notag\\
    & + \overline{\eta}\eta^{\gamma}\cdot 4 \left(M_{1}^2 e^{C} +  12 M_0^2 M^2 e^{C+1} + d \sigma^2_1 M_e^2 \right) \left( \eta^{2\alpha - \gamma} + \overline{\eta} + \frac{\eta^{\beta} (1 - \chi_0)}{S}\right)  \big(1+\|\XX_{Sk}\|^2\big)
    \notag\\
    &+ \overline{\eta}\eta^{\gamma} \cdot 4 M_{0}^2 \Big\|\XX_{Sk}-\DD_{\overline{\eta}k}\Big\|^2
    \\
    \le& \left(1- \overline{\eta} (2 L -  L^2\overline{\eta}- 1 - 3 M_{0}^2 - 3 M_{0}^2 \overline{\eta} - 4 M_{0}^2 \eta^{\gamma})\right)\|\DD_{\overline{\eta}k} - \XX_{Sk}\|^2
    \notag\\
    & + \overline{\eta}\left( 3M_{1}^2 e^{C} + 3 M_0^2 C' + 4 M_{1}^2 e^{C} +  48 M_0^2 M^2 e^{C+1} + 4 d \sigma^2_1 M_e^2 \right) 
    \\
    &\hspace{3cm}\cdot
    \left( \eta^{2\alpha} + \eta^{2\alpha} \overline{\eta} + \overline{\eta}^2 + \overline{\eta}^3 + \overline{\eta} \eta^{\gamma} + \frac{\eta^{\beta+ \gamma} (1 - \chi_0) }{S}\right)  \big(1+\|\XX_{Sk}\|^2\big). 
\end{align*}
With $L \geq 1 + 10M_0^2$, $L \overline{\eta} \leq 1$ and $\overline{\eta} = S \eta \leq 1$, we get
\begin{align*}
    \mathbb{E}\|\DD_{\overline{\eta}(k+1)} &  - \XX_{S(k+1)}\|^2 \big{|} \XX_{Sk} \big] 
    \\
    \le& 2\left( 3M_{1}^2 e^{C} + 3 M_0^2 C' + 4 M_{1}^2 e^{C} +  48 M_0^2 M^2 e^{(C+1)} + 4 d \sigma^2_1 M_e^2 \right) 
    \\
    &\hspace{4cm}\cdot
    \left( \eta^{2\alpha} + \overline{\eta} + \frac{\eta^{\beta+ \gamma} (1 - \chi_0) }{S}\right)  \big(1+\|\XX_{Sk}\|^2\big). 
\end{align*}
Definition of $R^2(t)$ finishes the proof.
\end{proof}
\begin{lemma}[Lemma \ref{lemma:A3_main}]\label{lemma:A3}
Let Assumption \ref{as:key} holds. Then for any $k \in \mathbb{N}_0$:
$$\sup_{t\le k{\eta}}\mathbb{E}\big\|\DD_{{[t/\overline{\eta}]\overline{\eta}}}-Y_{{t}}\big\|^2 = C_{2} \overline{\eta}^{1+\gamma} R^2(k{\eta}),$$
where $R^2(k \eta) \ge \max\left\{ 1;\max_{k' \le k} \mathbb{E}\| X_{k'} \|^2\right\}$ and $C_{2} \le 4 (L^2 + M^2)\Big(1 + 3C''\Big)\Big(1 + \big\|\x_{0}\big\|^2\Big)$ (for the definitions of $L$ and $C''$ see Lemma \ref{lem:8}).
% \todo{probably there should be $\le$, I'll check}
\end{lemma}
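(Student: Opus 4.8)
The plan is to exploit that, by construction, the interpolating diffusion $Y_{t}$ reproduces the coupled chain $\DD_{\overline{\eta} k}$ \emph{exactly} at the grid points $t=k\overline{\eta}$, so the only thing to estimate is the oscillation of $Y$ \emph{inside} one block of length $\overline{\eta}$. Over each interval $[m\overline{\eta},(m+1)\overline{\eta})$ the drift of $Y$ equals $b(\DD_{\overline{\eta} m})+\g^{S}_{\overline{\eta} m}$ and its diffusion coefficient equals $\sqrt{\eta^{\gamma}}\,\sigma(\DD_{\overline{\eta} m})$, both frozen over the block; since $\zeta^{S}_{m}=\sqrt{\overline{\eta}^{-1}}\,(W_{(m+1)\overline{\eta}}-W_{m\overline{\eta}})$, integrating the SDE over one block reproduces the defining recursion \eqref{eq:coupled_v2} of $\DD$, whence $Y_{m\overline{\eta}}=\DD_{\overline{\eta} m}$ for all $m$ by induction from $Y_{0}=\DD_{0}$. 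Thus for $t\in[m\overline{\eta},(m+1)\overline{\eta})$ one has the explicit identity
$$
\DD_{[t/\overline{\eta}]\overline{\eta}}-Y_{t}=Y_{m\overline{\eta}}-Y_{t}=-(t-m\overline{\eta})\big(b(\DD_{\overline{\eta} m})+\g^{S}_{\overline{\eta} m}\big)-\sqrt{\eta^{\gamma}}\,\sigma(\DD_{\overline{\eta} m})\big(W_{t}-W_{m\overline{\eta}}\big).
$$
Squaring and taking expectations, I would note that the first summand is $\mathcal{F}_{m\overline{\eta}}$-measurable (recall $\g^{S}_{\overline{\eta} m}=L(\XX_{Sm}-\DD_{\overline{\eta} m})$), while $W_{t}-W_{m\overline{\eta}}$ is independent of $\mathcal{F}_{m\overline{\eta}}$ with law $\N(\z_{d},(t-m\overline{\eta})I_{d})$ (the optimal coupling only rearranges the joint law of $\zeta^{S}_{m}$ with the chain noise, leaving the marginal of $W$ a standard Brownian motion). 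Hence the cross term vanishes in expectation and, using $t-m\overline{\eta}\le\overline{\eta}$,
$$
\E\big\|\DD_{[t/\overline{\eta}]\overline{\eta}}-Y_{t}\big\|^{2}\le 2\overline{\eta}^{2}\,\E\big\|b(\DD_{\overline{\eta} m})+\g^{S}_{\overline{\eta} m}\big\|^{2}+2\eta^{\gamma}\overline{\eta}\,\E\,\mathrm{Tr}\big(\sigma(\DD_{\overline{\eta} m})\sigma^{\mathrm{T}}(\DD_{\overline{\eta} m})\big).
$$

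It then remains to bound the two expectations on the right by $R^{2}(k\eta)$ up to constants. Since $t\le k\eta$ forces $Sm\le k$ and $\overline{\eta} m\le k\eta$, Lemma \ref{lemma:R_bound} gives $\E\|\XX_{Sm}\|^{2}\le R^{2}(k\eta)$, and Lemma \ref{lemma:A2} gives $\E\|\XX_{Sm}-\DD_{\overline{\eta} m}\|^{2}\le\Delta^{S}_{k\eta}R^{2}(k\eta)\le 3C''R^{2}(k\eta)$, since each of $\eta^{2\alpha},\ \eta^{\gamma+\beta}/S,\ \overline{\eta}$ is at most $1$. A triangle inequality then yields $1+\E\|\DD_{\overline{\eta} m}\|^{2}\le(3+6C'')R^{2}(k\eta)$, so the linear-growth bound recorded in Section \ref{sec:psa} (applied pointwise to the deterministic maps $b$ and $\sigma$) controls both $\E\|b(\DD_{\overline{\eta} m})\|^{2}$ and $\E\,\mathrm{Tr}(\sigma(\DD_{\overline{\eta} m})\sigma^{\mathrm{T}}(\DD_{\overline{\eta} m}))$ by $M^{2}(3+6C'')R^{2}(k\eta)$, while $\E\|\g^{S}_{\overline{\eta} m}\|^{2}=L^{2}\,\E\|\XX_{Sm}-\DD_{\overline{\eta} m}\|^{2}\le 3L^{2}C''R^{2}(k\eta)$. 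Substituting these into the previous display and using $\overline{\eta}\le 1$ and $\gamma\le 1$ (so that $\overline{\eta}^{2}\le\overline{\eta}^{1+\gamma}$) together with $S\ge 1$ (so that $\eta^{\gamma}\overline{\eta}=S\eta^{1+\gamma}\le(S\eta)^{1+\gamma}=\overline{\eta}^{1+\gamma}$), every term becomes a constant multiple of $\overline{\eta}^{1+\gamma}R^{2}(k\eta)$; collecting the constants reproduces the $C_{2}$ stated in the lemma. Taking the supremum over $t\le k\eta$ — equivalently over the finitely many blocks $m\le\lfloor k\eta/\overline{\eta}\rfloor$ — is harmless since the estimate is uniform in such $m$.

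The step I expect to be the main obstacle is verifying that the window-coupling term $\g^{S}_{\overline{\eta} m}=L(\XX_{Sm}-\DD_{\overline{\eta} m})$ — a priori the dangerous one, as it carries the large constant $L$ and the coupling error $\E\|\XX_{Sm}-\DD_{\overline{\eta} m}\|^{2}$, which Lemma \ref{lemma:A2} controls only by $\mathcal{O}(R^{2})$ and not by $\mathcal{O}(\overline{\eta} R^{2})$ — nonetheless collapses into $\mathcal{O}(\overline{\eta}^{1+\gamma}R^{2})$: this succeeds only because in the displacement bound it appears with the extra factor $\overline{\eta}^{2}\le\overline{\eta}^{1+\gamma}$. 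A secondary delicate point is the measurability argument that kills the cross term: one must use that the optimal coupling ties the block increment $W_{t}-W_{m\overline{\eta}}$ only to the \emph{future} chain noise $\epsilon_{Sm},\ldots,\epsilon_{Sm+S-1}$, so its conditional mean and covariance given $\mathcal{F}_{m\overline{\eta}}$ stay the Brownian ones. Everything else is the routine one-step Euler displacement estimate, already prepared by Lemmas \ref{lemma:R_bound} and \ref{lemma:A2}.
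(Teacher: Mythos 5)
Your proposal is correct and follows essentially the same route as the paper's proof: both reduce the estimate to the one-block displacement of $Y_{t}$ with coefficients frozen at $\DD_{[t/\overline{\eta}]\overline{\eta}}$ (using $Y_{m\overline{\eta}}=\DD_{\overline{\eta}m}$), drop the vanishing cross term, and bound $\mathbb{E}\|b(\DD_{\overline{\eta}m})+\g^{S}_{\overline{\eta}m}\|^{2}$ and $\mathbb{E}\,\mathrm{Tr}(\sigma\sigma^{\mathrm{T}}(\DD_{\overline{\eta}m}))$ through Lemmas \ref{lemma:R_bound} and \ref{lemma:A2}, absorbing $\overline{\eta}^{2}$ and $\eta^{\gamma}\overline{\eta}$ into $\overline{\eta}^{1+\gamma}$. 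Your explicit verification that the grid identity holds and that the Brownian increment remains conditionally centered under the optimal coupling only makes explicit what the paper leaves implicit; the constants match up to harmless over-counting.
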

\begin{proof}
For $t = [t/\overline{\eta}]\overline{\eta} + \delta$ we write:
\begin{align*}
\mathbb{E}\Big\|& \DD_{[t/\overline{\eta}]\overline{\eta}}-Y_{{t}}\Big\|^2 
\\
\le& 2\delta^2 L^2\mathbb{E} \big\|\DD_{[t/\overline{\eta}]\overline{\eta}}-\XX_{[t/\overline{\eta}]S}\big\|^2 + \delta M^2(2\delta + 2\eta^{\gamma}) (1+ \mathbb{E}\big\|\DD_{[t/\overline{\eta}]\overline{\eta}}\big\|^2)
\\
\le& (2\delta^2M^2 + 4M^2\eta^{\gamma}\delta)(1 + \mathbb{E} \big|\XX_{{[t/\overline{\eta}]S}}\big|^2)
\\
&
+(2\delta^2 (L^2+M^2) + 4M^2\eta^{\gamma}\delta)\mathbb{E} \big\|\DD_{[t/\overline{\eta}]\overline{\eta}}-\XX_{[t/\overline{\eta}]S}\big\|^2
\\
\le& 2 M^2 S \eta \Big((\overline{\eta} + 2\eta^{\gamma})(1 + \big\|\x_{0}\big\|^2) e^{C k\eta} +( S \eta \big(\frac{L^2}{M^2}+1\big) + 2\eta^{\gamma})\max_{\overline{\eta} k'\le \eta k}\mathbb{E} \big\|\DD_{\overline{\eta} k'}-\XX_{k'}\big\|^2\Big) 
\\
\le& \Big((2 (L^2 + M^2) \overline{\eta}^2 + 2M^2 \overline{\eta}^{1+\gamma}\Big)\Big((1 + \big\|\x_{0}\big\|^2) e^{C k\eta} + \max_{\overline{\eta} k'\le \eta k}\mathbb{E} \big\|\DD_{\overline{\eta} k'}-\XX_{k'}\big\|^2\Big).
\end{align*}
\end{proof}

\begin{lemma}[Lemma \ref{lemma:A4-A3}]\label{lemma:A4-A3-app}
Let Assumption \ref{as:key} holds. Then for $L_{1} \ge 2M_{0}+4M_{0}\eta^{\gamma}$ and any time horizon $t = k \eta \geq 0$:
$$\mathcal{W}_{2}^{2}(\mathcal{L}(Y_{t}), \mathcal{L}(\ZZ^{Y}_{t})) \le \mathbb{E}\big\|Y_t- \ZZ^{Y}_t\big\|^2 \le \sup_{t'\le t}\mathbb{E}\Big\|\DD_{[t'/\overline{\eta}]\overline{\eta}}-Y_{{t'}}\Big\|^2.$$
\end{lemma}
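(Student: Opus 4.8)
The plan is as follows. The first inequality, $\mathcal{W}_{2}^{2}(\mathcal{L}(Y_{t}),\mathcal{L}(\ZZ^{Y}_{t}))\le\mathbb{E}\|Y_{t}-\ZZ^{Y}_{t}\|^{2}$, is free: the processes $Y_{t}$ and $\ZZ^{Y}_{t}$ live on the same probability space and are driven by the \emph{same} Brownian motion $\WW_{t}$, so $(Y_{t},\ZZ^{Y}_{t})$ is a coupling of their marginals and the tautological bound $\mathcal{W}_{2}(\nu,\mu)\le(\mathbb{E}\|x_{1}-x_{2}\|^{2})^{1/2}$ from the Preliminaries applies. Thus the content is the second inequality, a synchronous-coupling estimate, and the window-coupling constant $L_{1}$ is exactly what makes it go through.

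First I would put $Y_{t}$ into closed form: since $\g^{*}_{t}=\f(\DD_{[t/\overline{\eta}]\overline{\eta}})-\f(Y_{t})$ and $\Sigma^{*}_{t}=\sigma(\DD_{[t/\overline{\eta}]\overline{\eta}})-\sigma(Y_{t})$, the interpolation in fact solves
\[
\dd Y_{t}=\big(\f(\DD_{[t/\overline{\eta}]\overline{\eta}})+\g^{S}_{t}\big)\dd t+\sqrt{\eta^{\gamma}}\,\sigma(\DD_{[t/\overline{\eta}]\overline{\eta}})\,\dd\WW_{t},
\]
with drift and diffusion frozen at the last grid point. Subtracting the SDE for $\ZZ^{Y}_{t}$, the common term $\g^{S}_{t}$ cancels, and writing $D_{t}:=Y_{t}-\ZZ^{Y}_{t}$, $\Lambda_{t}:=\DD_{[t/\overline{\eta}]\overline{\eta}}-Y_{t}$ (so that $\DD_{[t/\overline{\eta}]\overline{\eta}}-\ZZ^{Y}_{t}=\Lambda_{t}+D_{t}$) gives
\[
\dd D_{t}=\big(\f(\DD_{[t/\overline{\eta}]\overline{\eta}})-\f(\ZZ^{Y}_{t})-L_{1}D_{t}\big)\dd t+\sqrt{\eta^{\gamma}}\big(\sigma(\DD_{[t/\overline{\eta}]\overline{\eta}})-\sigma(\ZZ^{Y}_{t})\big)\dd\WW_{t},\qquad D_{0}=0.
\]

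Next I would apply Ito's formula to $\|D_{t}\|^{2}$ and take expectations; the stochastic integral is a genuine martingale (linear growth of the coefficients plus the moment control of Lemma \ref{lemma:R_bound} gives the needed square-integrability, with a standard localization if one wants to be careful), hence
\[
\frac{\dd}{\dd t}\mathbb{E}\|D_{t}\|^{2}=\mathbb{E}\Big[2\langle D_{t},\f(\DD_{[t/\overline{\eta}]\overline{\eta}})-\f(\ZZ^{Y}_{t})\rangle-2L_{1}\|D_{t}\|^{2}+\eta^{\gamma}\big\|\sigma(\DD_{[t/\overline{\eta}]\overline{\eta}})-\sigma(\ZZ^{Y}_{t})\big\|_{F}^{2}\Big].
\]
Using $M_{0}$-Lipschitzness of $\f$, the Frobenius bound $\|\sigma(x)-\sigma(x')\|_{F}\le M_{0}\|x-x'\|$ established inside the proof of Lemma \ref{lemma:A2}, the splitting $\DD_{[t/\overline{\eta}]\overline{\eta}}-\ZZ^{Y}_{t}=\Lambda_{t}+D_{t}$, Cauchy--Schwarz and Young's inequality, the right-hand side is dominated by $-a\,\mathbb{E}\|D_{t}\|^{2}+b\,\mathbb{E}\|\Lambda_{t}\|^{2}$ for constants $a,b>0$ depending only on $M_{0}$ and $\eta^{\gamma}$; the constant $L_{1}$ enters only through $a$, and the threshold $L_{1}\ge 2M_{0}+4M_{0}\eta^{\gamma}$ is precisely what guarantees $a\ge b$. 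Since $D_{0}=0$, integrating the differential inequality $\frac{\dd}{\dd t}\mathbb{E}\|D_{t}\|^{2}\le-a\,\mathbb{E}\|D_{t}\|^{2}+b\,\mathbb{E}\|\Lambda_{t}\|^{2}$ yields $\mathbb{E}\|D_{t}\|^{2}\le\frac{b}{a}\sup_{s\le t}\mathbb{E}\|\Lambda_{s}\|^{2}\le\sup_{s\le t}\mathbb{E}\|\Lambda_{s}\|^{2}=\sup_{t'\le t}\mathbb{E}\|\DD_{[t'/\overline{\eta}]\overline{\eta}}-Y_{t'}\|^{2}$, which is the claimed bound.

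The step I expect to be the main obstacle is the constant bookkeeping in the Ito estimate: one must distribute the drift error $2\langle D_{t},\f(\DD_{[t/\overline{\eta}]\overline{\eta}})-\f(\ZZ^{Y}_{t})\rangle$ and the covariance error $\eta^{\gamma}\|\sigma(\DD_{[t/\overline{\eta}]\overline{\eta}})-\sigma(\ZZ^{Y}_{t})\|_{F}^{2}$ against the contraction $-2L_{1}\|D_{t}\|^{2}$ with Young weights chosen so that not only $a>0$ but also $b\le a$ — this is what pins down the explicit lower bound on $L_{1}$ and is where $\eta\le 1$ and $\gamma\in\{0,1\}$ are used. Everything else (the coupling inequality, the Ito expansion, and the Gronwall-type integration) is routine.
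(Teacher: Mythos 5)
Your proposal is correct and follows essentially the same route as the paper: the paper also writes the SDE for $Q_t = Y_t - Z^Y_t$ (the common $\g^S_t$ cancels), splits the drift and covariance differences through $Y_t$ exactly as your decomposition $\DD_{[t/\overline{\eta}]\overline{\eta}} - \ZZ^Y_t = \Lambda_t + D_t$ does, applies It\^o's formula to $\|Q_t\|^2$, kills the martingale term in expectation, and chooses $L_1$ so that the contraction coefficient dominates the forcing coefficient (your condition $b \le a$), before finishing with the same synchronous-coupling bound $\mathcal{W}_2^2 \le \mathbb{E}\|Q_t\|^2$. The only differences are cosmetic (your Young-inequality bookkeeping with generic $a,b$ versus the paper's explicit constants $2M_0 + 4M_0^2\eta^\gamma$).
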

\begin{proof}
Consider process $Q_{t} = Y_{t}-\ZZ^{Y}_{t}$. Note that $Q_{0} = 0_d$. We have the following SDE for the process $Q_t$:
\begin{align*}
\mathrm{d}Q_{t} 
=&
- L_{1} Q_{t}\mathrm{d}t + \big(\f(\DD_{[t/\overline{\eta}]\overline{\eta}})-b(\ZZ^{Y}_{t})\big)\mathrm{d}t
+ \sqrt{2\eta^{\gamma}}({\sigma}(\DD_{[t/\overline{\eta}]\overline{\eta}}) - \sigma(\ZZ^{Y}_{t}))\mathrm{d}W_{t}
\\
=&
- L_{1} Q_{t}\mathrm{d}t + (b(Y_{t})-b(\ZZ^{Y}_{t}))\mathrm{d} t + (\f(\DD_{[t/\overline{\eta}]\overline{\eta}})-\f(Y_{t}))\mathrm{d}t
\\
&+ \sqrt{2\eta^{\gamma}}(\sigma(Y_{t}) - \sigma(\ZZ^{Y}_{t}))\mathrm{d}W_{t}+\sqrt{2\eta^{\gamma}}({\sigma}(\DD_{[t/\overline{\eta}]\overline{\eta}}) - \sigma(Y_{t}))\mathrm{d}W_{t}.
\end{align*}
Now, by applying Itô lemma (see Theorem 8.1, p. 220 and Remark 9.1 on p. 257 in \cite{baldi2017stochastic}) to $f(x) = x^2$ we obtain the following bound on the process $\|Q_t\|^2$:
\begin{align*}   
\|Q_t\|^2 &\le \int_{0}^t \Big(-2L_{1}\|Q_t\|^2+2\langle Q_{t}, b(Y_{t})-b(\ZZ^{Y}_{t})\rangle+4\eta^{\gamma} \mathrm{tr\,}\Big(\big(\sigma(Y_{t}) - \sigma(\ZZ^{Y}_{t})\big)^2\Big) 
\\
 +&2\big\|\f(\DD_{[t/\overline{\eta}]\overline{\eta}})-\f(Y_{t})\big\|^2 + 4\eta^{\gamma}\mathrm{tr\,}\Big(\big({\sigma}(\DD_{[t/\overline{\eta}]\overline{\eta}}) - \sigma(Y_{t})\big)^2\Big)\Big)\mathrm{d}t + \xi_t
 \\
\le& \int_{0}^t \Big((-2 L_{1}+2M_{0}+4M_{0}^2\eta^{\gamma})\|Q_t\|^2 + (2M_{0}+4M_{0}^2\eta^{\gamma})\Big\|\DD_{[t/\overline{\eta}]\overline{\eta}}-Y_{t}\Big\|^2\Big)\mathrm{d}t + \xi_t,
\end{align*}
where $\xi_t = \sqrt{2\eta^{\gamma}}\int_{0}^t\langle 2 Q_t, \big({\sigma}(\DD_{[t/\overline{\eta}]\overline{\eta}}) - \sigma(\ZZ^{Y}_{t})\big)\mathrm{d}W_{t}\rangle$. 

Next, by noting that $\xi_{0} = 0$ and $\xi_{t}$ by it's definition is a martingale process we have $\mathbb{E}\xi_t = 0$ (see Remark 9.2 on p. 264 in \cite{baldi2017stochastic}) and, thus, by taking expectation $\mathbb{E}\|Q_t\|^2$ we arrive at the following bound:
\begin{align*}
    &\mathbb{E}\|Q_t\|^2 \le \mathbb{E}\int_{0}^t \Big((-2 L_{1}+2M_{0}+4M_{0}^2\eta^{\gamma})\|Q_t\|^2 + (2M_{0}+4M_{0}^2\eta^{\gamma})\Big\|\DD_{[t/\overline{\eta}]\overline{\eta}}-Y_{t}\Big\|^2\Big)\mathrm{d}t 
    \\
    &= \int_{0}^t \Big((-2 L_{1}+2M_{0}+4M_{0}^2\eta^{\gamma})\mathbb{E}\|Q_t\|^2 + (2M_{0}+4M_{0}^2\eta^{\gamma})\mathbb{E}\Big\|\DD_{[t/\overline{\eta}]\overline{\eta}}-Y_{t}\Big\|^2\Big)\mathrm{d}t 
    \\
    &\le\int_{0}^t \Big((-2 L_{1}+2M_{0}+4M_{0}^2\eta^{\gamma})\mathbb{E}\|Q_t\|^2 + (2M_{0}+4M_{0}^2\eta^{\gamma})\mathrm{sup}_{t'\le t}\mathbb{E}\Big\|\DD_{[t'/\overline{\eta}]\overline{\eta}}-Y_{t'}\Big\|^2\Big)\mathrm{d}t
\end{align*}

Recall that we can choose $L_{1}$ arbitrary in the definition of $Z^{Y}_t$. We want to make sure that $-2 L_{1} +2M_{0}+4M_{0}^2\eta^{\gamma} < 0$. In that case, we will obtain that $\mathbb{E}\|Q_t\|^2 \le \frac{(2M_{0}+4M_{0}^2\eta^{\gamma})}{2 L_{1} -2M_{0}-4M_{0}^2\eta^{\gamma}}\mathrm{sup}_{t'\le t}\mathbb{E}\Big\|\DD_{[t'/\overline{\eta}]\overline{\eta}}-Y_{t'}\Big\|^2$. To simplify the bound, we are going to require $\frac{(2M_{0}+4M_{0}^2\eta^{\gamma})}{2 L_{1} -2M_{0}-4M_{0}^2\eta^{\gamma}} \le 1$. Thus, by considering $L_{1}$ such that $-2 L_{1} +2M_{0}+4M_{0}^2\eta^{\gamma} \le - (2M_{0}+4M_{0}^2\eta^{\gamma})$ the lemma implies due to the fact that $\mathcal{W}_{2}^{2}(\mathcal{L}(Y_{t}), \mathcal{L}(\ZZ^{Y}_{t})) \le \mathbb{E}\big\|Y_t- \ZZ^{Y}_t\big\|^2 = \mathbb{E}\|Q_t\|^2$.
\end{proof}

\vspace{-0.2cm}
\subsection{Proof of Theorem \ref{th:girsanov}}
\vspace{-0.2cm}

The proof idea relies on multiple applications of Theorem \ref{thm:gyongy} to get a process for which the classic Girsanov theorem \ref{classic-girsanov} holds. To apply the theorem \ref{classic-girsanov}, we must ensure that the final process is Markovian diffusion, for which the Novikov type condition $\E e^{\int_0^{\mathrm{T}} {\big{\|}}\overline{\g}(\overline{\ZZ}_{{t}}, t){\big{\|}}^2}\dd s < \infty$ holds. While \ref{thm:gyongy} will give us Markovian property, to get the Novikov condition, we are going to build a sequence of intermediate processes using stopping time $\tau_r = T \wedge \inf_{\tau \ge 0}\{\int_0^\tau{\big{\|}}\overline{\g}({\ZZ}_{{t}}, t){\big{\|}}^2\dd t \ge r^2\}$ and then we are going to select a subsequence $r_n\rightarrow \infty$ which in the limit will upper bound $D_{\mathrm{KL}}$ for the original process $\overline{Z}_t$. To do the former, we will prove the following Lemma first.
\begin{lemma}\label{lemma:KL-subseq}
Assume that $\ZZ_1, \ldots, \ZZ_k, \ldots$ is a sequence of random variables converging in law to the random variable $\ZZ_{0}$ and such that $\exists \xi_i \triangleq \frac{\dd \LL(\ZZ_i)}{\dd \mu}$ for some measure $\mu$ $\forall i\in \{0, 1, \ldots\}$. Moreover we assume that $D_{\mathrm{KL}}\big(\xi_{i} \mu\big|\big|\mu\big) < \infty \forall i\in\{0, 1,\ldots\}$ uniformly and that measure $\mu$ is non-singular with respect to standard Lebesgue measure on $\mathbb{R}^d$. Then it holds:
$$D_{\mathrm{KL}}\big(\xi_{0} \mu\big|\big|\mu\big) \le \sup_{k\ge 1} D_{\mathrm{KL}}\big(\xi_{k} \mu\big|\big|\mu\big) < \infty$$
\end{lemma}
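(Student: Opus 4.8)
The plan is to derive the statement from the \emph{lower semicontinuity of relative entropy with respect to weak convergence of its first argument, with the second argument held fixed}. The natural tool is the Donsker--Varadhan variational representation: for any two Borel probability measures $\nu,\mu$ on $\mathbb{R}^d$,
$$
D_{\mathrm{KL}}\big(\nu\big|\big|\mu\big) = \sup_{f\in C_b(\mathbb{R}^d)}\Big\{\int_{\mathbb{R}^d} f\,\dd\nu - \log\int_{\mathbb{R}^d} e^{f}\,\dd\mu\Big\}.
$$
The non-singularity of $\mu$ with respect to Lebesgue measure guarantees that each $\xi_i$ is a genuine Radon--Nikodym derivative and that, together with the separability of $\mathbb{R}^d$, the representation above is valid over $C_b(\mathbb{R}^d)$; note also that for bounded continuous $f$ the quantity $\int e^f\,\dd\mu$ is automatically finite and bounded away from zero, so every term on the right is well defined. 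Recall that $\LL(\ZZ_i)=\xi_i\mu$ for all $i\ge 0$ by definition of $\xi_i$.

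First I would fix an arbitrary $f\in C_b(\mathbb{R}^d)$ and observe that the map $\nu\mapsto \int f\,\dd\nu - \log\int e^{f}\,\dd\mu$ depends on $\nu$ only through $\int f\,\dd\nu$, which is continuous along weakly convergent sequences of probability measures precisely because $f$ is bounded and continuous. Hence $D_{\mathrm{KL}}(\,\cdot\,||\mu)$ is a pointwise supremum of weakly continuous functionals, and therefore it is weakly lower semicontinuous: whenever $\nu_k\to\nu_0$ weakly one has $D_{\mathrm{KL}}(\nu_0||\mu)\le\liminf_{k\to\infty}D_{\mathrm{KL}}(\nu_k||\mu)$. (Equivalently, one could prove this directly from the representation $D_{\mathrm{KL}}(\nu||\mu)=\int (\dd\nu/\dd\mu)\log(\dd\nu/\dd\mu)\,\dd\mu$ using the convexity of $x\mapsto x\log x$ together with a truncation and Fatou argument, but the variational route is shortest.)

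Finally I would apply this with $\nu_k=\LL(\ZZ_k)=\xi_k\mu$ and $\nu_0=\LL(\ZZ_0)=\xi_0\mu$: by hypothesis $\ZZ_k\to\ZZ_0$ in law, i.e. $\xi_k\mu\to\xi_0\mu$ weakly, so
$$
D_{\mathrm{KL}}\big(\xi_0\mu\big|\big|\mu\big)\le\liminf_{k\to\infty}D_{\mathrm{KL}}\big(\xi_k\mu\big|\big|\mu\big)\le\sup_{k\ge 1}D_{\mathrm{KL}}\big(\xi_k\mu\big|\big|\mu\big),
$$
and the right-hand side is finite by the assumed uniform finiteness of the divergences $D_{\mathrm{KL}}(\xi_i\mu||\mu)$. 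This is exactly the asserted inequality. No passage to a subsequence of $(\ZZ_k)$ is required, since the whole sequence converges in law. The only genuinely delicate point — and the one I would spell out carefully — is the justification of the variational formula (hence of the lower semicontinuity) at the needed level of generality; the remaining arguments are immediate one-line consequences.
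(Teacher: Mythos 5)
Your proof is correct, but it takes a genuinely different route from the paper's. You invoke the Donsker--Varadhan variational representation $D_{\mathrm{KL}}(\nu||\mu)=\sup_{f\in C_b}\{\int f\,\dd\nu-\log\int e^f\dd\mu\}$ and observe that a pointwise supremum of weakly continuous functionals is weakly lower semicontinuous, so the bound follows along the full sequence with no subsequence extraction. The paper instead gives a self-contained, hands-on argument: it builds finite partitions $\mathcal{P}_n$ from the level sets of $\xi_0$, defines the conditional-averaging operators $\Gamma_n$, extracts a diagonal subsequence $k_n$ so that $\Gamma_n(\xi_{k_n})$ converges to $\xi_0$ up to $2^{-n}$ errors, and concludes via the data-processing inequality $D_{\mathrm{KL}}(\Gamma_n(\xi_{k_n})\mu||\mu)\le D_{\mathrm{KL}}(\xi_{k_n}\mu||\mu)$ together with dominated convergence. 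Your route is shorter and more standard (it is the textbook proof of lower semicontinuity of relative entropy, e.g.\ Dupuis--Ellis, Lemma 1.4.3, which also settles the one point you flag as delicate: on a Polish space with Borel probability measures the supremum may indeed be restricted to $C_b$); the paper's route buys independence from the variational formula, using only DPI, at the cost of a more intricate and somewhat less tidy construction. Two small remarks: the non-singularity of $\mu$ with respect to Lebesgue measure is not what validates the $C_b$ representation --- the Polish structure of $\mathbb{R}^d$ is --- so your attribution there is harmless but inessential; and your argument (like the paper's definition of $D_{\mathrm{KL}}$ and its nonnegativity) tacitly treats $\mu$ as a probability measure, which is the case in the only application, $\mu=\mathcal{L}(Z^*_T)$.
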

\begin{proof}
Let's define the following unity partition system $\mathcal{P}_{n} = \Big\{\xi_{0}^{-1}([m 2^{-n}, (m+1) 2^{-n})\Big{|}0\le m \le n 2^{n}, m\in\mathbb{N}_{0}\Big\}\cup \Big\{\xi_{0}^{-1}([n, \infty))\Big\}$. For each density $\xi_k$ we define 
$$\Gamma_{n}(\xi_{k}) = \sum_{A\in\mathcal{P}_{n}} \mathbf{1}_{A}\frac{\mathbb{E}_{\mu} \big(\xi_{k}\mathbf{1}_{A}\big)}{\mu(A)}\ge 0$$
$\Gamma_m(\xi_k)$ is essentially finite-sum approximation of Lebesgue integral of $\xi_k$. Let $\xi_{k}^{<n} = \xi_{k}\wedge n = \min\{\xi_k, n\}$. We have:
$$\Gamma_{n}(\xi_{k}^{<n}) = \sum_{A\in\mathcal{P}_{n}} \mathbf{1}_{A}\frac{\mathbb{E}_{\mu} \big(\xi_{n}^{<n}\mathbf{1}_{A}\big)}{\mu(A)} =  \sum_{A\in\mathcal{P}_{n}} \mathbf{1}_{A}\frac{\mathbb{E}_{\mu} \big(\xi_{0}^{<n}\mathbf{1}_{A}\big)}{\mu(A)}+ \sum_{A\in\mathcal{P}_{n}} \mathbf{1}_{A}\frac{\mathbb{E}_{\mu}{\big((\xi_{n}^{<n}-\xi_{0}^{<n})} \mathbf{1}_{A}\big)}{\mu(A)}$$
Consider taking subsequence $k_{n}$ such that $\big|\mathbb{E}_{\mu}\big((\xi_{k_{n}}^{<n}-\xi_{0}^{<n}) \mathbf{1}_{A}\big)\big| \le 2^{-n} \mu(A)$. Since $\xi_k$ is convergent in law to $\xi_0$ we have that $\xi_k^{<n}$ convergent weakly to $\xi_{0}^{<n}$ as $k\rightarrow \infty$, hence for each $n$ and for each $A \in \mathcal{P}_{n}$ there exists $k_{n, A}$ large enough so that $\forall k \ge k_{n, A}$ we have $\big|\mathbb{E}_{\mu}\big((\xi_{k_{n, A}}^{<n}-\xi_{0}^{<n}) \mathbf{1}_{A}\big)\big| \le 2^{-n} \mu(A)$. Since $\mathcal{P}_{n}$ is finite we can define $k_{n} = \max_{A\in\mathcal{P}_{n}} k_{n, A} < \infty$, which satisfies $\big|\mathbb{E}_{\mu}\big((\xi_{k_{n}}^{<n}-\xi_{0}^{<n}) \mathbf{1}_{A}\big)\big| \le 2^{-n} \mu(A)$ for all $A\in\mathcal{P}_{n}$.

Thus, we have:
$$\Big|\Gamma_{n}(\xi_{k_{n}}^{<n}) - \sum_{A\in\mathcal{P}_{n}} \mathbf{1}_{A}\frac{\mathbb{E}_{\mu} \big(\xi_{0}^{<n}\mathbf{1}_{A}\big)}{\mu(A)}\Big|\le 2^{-n}$$
Moreover, observe that $\xi_{0}^{<n} \equiv \sum_{A\in\mathcal{P}_{n}} \xi_{0}^{<n} \bm{1}_{A}$ since $\mathcal{P}_{n}$ is unity partition. We have almost surely:
$$\Big|\xi_{0}^{<n} - \sum_{A\in\mathcal{P}_{n}} \mathbf{1}_{A}\frac{\mathbb{E}_{\mu} \big(\xi_{0}^{<n}\mathbf{1}_{A}\big)}{\mu(A)}\Big| \le \sum_{A\in\mathcal{P}_{n}} \mathbf{1}_{A}\mathbb{E}_{\mu}\big( (\mathrm{ess\,}\sup_{A} \xi_{0}^{<n} - \mathrm{ess\,}\inf_{A} \xi_{0}^{<n} )\mathbf{1}_{A}\big) \le \max_{A\in\mathcal{P}_{n}} \Delta_{A}\xi_{0}^{<n},$$
where $\Delta_{A}\xi_{0}^{<n} =\mathrm{ess\,}\sup_{A} \xi_{0}^{<n} - \mathrm{ess\,}\inf_{A} \xi_{0}^{<n}$. By definition of $\mathcal{P}_{n}$ we have that $\Delta_{A}\xi_{0}^{<n} \le 2^{-n}$, which by combining both inequalities implies almost surely:
% $$\Gamma_{m_{n}}(\xi_{k_{n}}) \ge \Gamma_{m_{n}}(\xi_{k_{n}}^{C_{n}}) = \xi_{0}^{C_{n}} - 2\varepsilon_{n} $$
$$ \xi_{0}^{<n} - 2^{-n+1}\le \Gamma_{n}(\xi_{k_{n}}) \le \xi_{0}^{<n} + 2^{-n + 1} \le \Gamma_{n}(\xi_{k_{n}}) + 2^{-n+2} $$
Note that $\Gamma_{n}(\bm{1}^{<n}) = \bm{1}$, therefore, by dominated convergence theorem \cite{edmonds1977lebesgue} with Data Processing inequality \ref{DPI} $D_{\mathrm{KL}}\big(\Gamma_{n}(\xi_{k_{n}})\mu\big|\big|\Gamma_{n}(\bm{1})\mu\big) \le D_{\mathrm{KL}}\big(\xi_{k_{n}}\mu\big|\big|\mu\big)$ we obtain $\sup_{k\ge 1} D_{\mathrm{KL}}\big(\xi_{k}\mu\big|\big|\mu\big) \ge \lim_{n\rightarrow \infty} D_{\mathrm{KL}}\big(\Gamma_{n}(\xi_{k_{n}})\mu\big|\big|\mu\big) =  D_{\mathrm{KL}}\big(\xi_{0}\mu\big|\big|\mu\big)$ which finishes the proof.

\end{proof}
\begin{proof} [Proof of Theorem \ref{th:girsanov}]
By applying Theorem \ref{thm:gyongy} we obtain the process $(\overline{\ZZ}_{{t}})_{t\ge 0}$ that has the same one-time marginals as the process $(Z_t)_{t\ge 0}$:
$$\dd\overline{\ZZ}_{{t}} = (\f(\overline{\ZZ}_{{t}}) +  \E\big(\g^*_{{t}}\big|\ZZ_{{t}}=\overline{\ZZ}_{{t}}\big))\dd t + {\sigma(\overline{\ZZ}_{{t}})}\dd \w_{{t}}$$
Denote by $\overline{\g}(\x, t) \triangleq \E\big(\g^*_{{t}}\big|\ZZ_{{t}}=\x\big)$.
We have that $D_{\mathrm{KL}}\big(\LL(\ZZ_{{T}})\big|\big|\LL(\ZZ^*_{{T}})) = D_{\mathrm{KL}}\big(\LL(\overline{\ZZ}_{{T}})\big|\big|\LL(\ZZ^*_{{T}}))$. Consider defining processes $\ZZ^{r}_{{t}}$ for $r \ge 0$ as:
$$\dd\ZZ^{r}_{{t}} = (\f(\ZZ^{r}_{{t}}) + \overline{\g}_r(\ZZ^{r}_{{t}}, t))\dd t + {\sigma(\ZZ^{r}_{{t}})}\dd \w_{{t}}$$
where we define the following progressively measurable process $\overline{\g}_r(\ZZ^{r}_{{t}}, t) = \mathbf{1}_{\{\int_0^t{\big{\|}}\overline{\g}(\ZZ^{r}_{{t}}, t){\big{\|}}^2 < r^2\}}\overline{\g}(\ZZ^{r}_{{t}}, t)$. Denote by $\tau_r = T \wedge \inf_{\tau \ge 0}\{\int_0^\tau{\big{\|}}\overline{\g}(\overline{\ZZ}_{{t}}, t){\big{\|}}^2\dd t \ge r^2\}$. Clearly that $\tau_r$ is a stopping time, i.e. $\{\tau_r = t\}$ is $\overline{\ZZ}_{{t}}$-measurable $\forall t\ge 0$. Moreover, we have the following property $\overline{\ZZ}_{{t}}\mathbf{1}_{t < \tau_r} = \ZZ^{r}_{{t}}\mathbf{1}_{t < \tau_r}$ holding almost surely. Moreover, $\tau_r \rightarrow T$ as $r\rightarrow \infty$ due to the continuity of probability which implies that $\ZZ^{r}_{{t}} \rightarrow \overline{\ZZ}_{{t}}\forall t\in [0, T]$ holding almost surely as $r\rightarrow \infty$. And, finally, we can rewrite $\overline{\g}_r(\ZZ^{r}_{{t}}, t) =  \mathbf{1}_{\{\int_0^t{\big{\|}}\overline{\g}(\ZZ^{r}_{{t}}, t){\big{\|}}^2 < r^2\}}\overline{\g}(\ZZ^{r}_{{t}}, t) = \mathbf{1}_{\{t < \tau_r\}}\overline{\g}(\ZZ^{r}_{{t}}, t) = \mathbf{1}_{\{t < \tau_r\}}\overline{\g}(\overline{\ZZ}_{{t}}, t) $. 

By applying Theorem \ref{thm:gyongy} again to the process $({\ZZ}^{r}_{{t}})_{t\ge 0}$ we obtain the process $(\overline{\ZZ}^{r}_{{t}})_{t\ge 0}$ that has the same one-time marginals:
$$\dd\overline{\ZZ}^{r}_{{t}} = (\f(\overline{\ZZ}^{r}_{{t}}) + \E\big(\overline{\g}_r(\ZZ^{r}_{{t}}, t)\big|\ZZ^{r}_{{t}}=\overline{\ZZ}^{r}_{{t}}\big)\dd t + {\sigma(\overline{\ZZ}^{r}_{{t}})}\dd \w_{{t}}$$
Denote by $\widetilde{\g}_r(\x, t) = \E\big(\overline{\g}_r(\ZZ^{r}_{{t}}, t)\big|\ZZ^{r}_{{t}}=\x\big)$ and by $\w^{r}_{{t}} \triangleq \w_{{t}}+ \int_0^t{\sigma(\overline{\ZZ}^{r}_{{s}})^{-1}}\widetilde{\g}_r(\overline{\ZZ}^{r}_{{t}}, t))\dd s$. Consider process $\ZZ^{*}_{{t}}$ in the probability space where $\w^{*}_{{t}}$ is a Wiener process and satisfies:
$$\dd\ZZ^*_{{t}} = \f(\ZZ^*_{{t}})\dd t + {\sigma(\ZZ^*_{{t}})}\dd \w^{*}_{{t}}$$
Let $\mu_T \triangleq \LL(\ZZ^*_{{t}}: 0\le t\le T)$ which is independent from $r$. Let $\nu_T^r \triangleq \LL(\overline{\ZZ}^{r}_{{t}}: 0\le t\le T)$. Note that the Novikov condition holds $\E e^{\int_0^{\mathrm{T}} {\big{\|}}{\sigma(\overline{\ZZ}^{r}_{{s}})^{-1}}\widetilde{\g}_r(\overline{\ZZ}^{r}_{{t}}, t){\big{\|}}^2\dd s} \le e^{{\sigma_{0}}^{-2}r^2 T} < \infty \forall T > 0$. Then by applying the Theorem \ref{classic-girsanov} to $\overline{\ZZ}_{{T}}^r$ and $\ZZ^*_{{T}}$ we obtain the following sequence of inequalities:
\begin{align*}
D_{\mathrm{KL}}\big(\nu_T^r\big|\big|\mu_T) \overset{\tiny{\text{Girsanov, \ref{th:girsanov}}}}{=} & \E\int_0^{T} {\big{\|}}{\sigma(\overline{\ZZ}^{r}_{{t}})^{-1}}\widetilde{\g}_r(\overline{\ZZ}^{r}_{{t}}, t){\big{\|}}^2\dd t \overset{\tiny{\text{Operator norm}}} \le {\sigma_{0}}^{-2}\int_0^{T} \E{\big{\|}}\widetilde{\g}_r(\overline{\ZZ}^{r}_{{t}}, t){\big{\|}}^2\dd t
\\
\overset{\tiny{\text{Same marginals, \ref{thm:gyongy}}}}{=}& {\sigma_{0}}^{-2}\int_0^{T} \E{\big{\|}}\widetilde{\g}_r(\ZZ^{r}_{{t}}, t){\big{\|}}^2\dd t\overset{\tiny{\text{Jensen,$\|\mathbb{E}\cdot\|^2 \le \mathbb{E}\|\cdot\|^2$ }}, \ref{thm:gyongy}}{\le} {\sigma_{0}}^{-2}\int_0^{T} \E{\big{\|}}\overline{\g}_r(\ZZ^{r}_{{t}}, t){\big{\|}}^2\dd t 
\\
 \overset{\tiny{\text{Due definition of $\tau_r$, $\overline{g}_r$}}}{=}& {\sigma_{0}}^{-2}\int_0^{\tau_r} \E{\big{\|}}\overline{\g}_r(\ZZ^{r}_{{t}}, t){\big{\|}}^2\dd t \overset{\tiny{\text{Due definition of $\tau_r$, $Z^{r}_{t}$}}}{=} {\sigma_{0}}^{-2}\int_0^{\tau_r} \E{\big{\|}}\overline{\g}(\overline{\ZZ}_{{t}}, t){\big{\|}}^2\dd t  
 \\
 \overset{\tiny{\text{$\tau_r \le T$ and $\|\cdot\|^2 \ge 0$}}}{\le}& {\sigma_{0}}^{-2}\int_0^{T} \E{\big{\|}}\overline{\g}(\overline{\ZZ}_{{t}}, t){\big{\|}}^2\dd t \overset{\tiny{\text{Same marginals, \ref{thm:gyongy}}}}{=} {\sigma_{0}}^{-2}\int_0^{T} \E{\big{\|}}\overline{\g}(\ZZ_{{t}}, t){\big{\|}}^2\dd t
\\
\overset{\tiny{\text{Jensen,$\|\mathbb{E}\cdot\|^2 \le \mathbb{E}\|\cdot\|^2$ }}, \ref{thm:gyongy}}{\le}& {\sigma_{0}}^{-2}\int_0^{T} \E{\big{\|}}\g^*_{{t}}{\big{\|}}^2\dd t \overset{\tiny{\text{By assumption}}}{<} \infty,
\end{align*}
where we use Jensen inequality as $\|\mathbb{E}\cdot\|^2 \le \mathbb{E}\|\cdot\|^2$ whenever expression inside of $\|\cdot\|^2$ was obtained from application of Theorem \ref{thm:gyongy} and use property of the same Theorem \ref{thm:gyongy} that one-time marginals coincide for processes that we obtain from applying it, which implies equality of expectations. 

Then we have that $\forall r\ge 0$ the following uniform majorization condition holds $D_{\mathrm{KL}}\big(\nu_T^r\big|\big|\mu_T) \le {\sigma_{0}}^{-2} \int_0^{\mathrm{T}} \E{\big{\|}}\g^*_{{s}}{\big{\|}}^2\dd s < \infty$. By Data Processing inequality \ref{DPI} we have $ D_{\mathrm{KL}}\big(\LL(\overline{\ZZ}^{r}_{{T}})\big|\big|\LL(\ZZ^*_{{T}})\big) \le D_{\mathrm{KL}}\big(\nu_T^r\big|\big|\mu_T)$. Using the fact that $\LL(\overline{\ZZ}^{r}_{{T}}) = \LL(\ZZ^{r}_{{T}})$ we obtain majorization condition:
$$ D_{\mathrm{KL}}\big(\LL(\ZZ^{r}_{{T}})\big|\big|\LL(\ZZ^*_{{T}})\big) \le D_{\mathrm{KL}}\big(\nu_T^r\big|\big|\mu_T) \le {\sigma_{0}}^{-2} \int_0^{\mathrm{T}} \E{\big{\|}}\g^*_{{s}}{\big{\|}}^2\dd s < \infty$$
% Then by Lemma \ref{lemma:KL-subseq} there is subsequence $r_n$ such that
By noting $\LL({\ZZ}^{r}_{{T}}) \Rightarrow \LL(\overline{\ZZ}_{{T}})$ as $r\rightarrow \infty$ and $D_{\mathrm{KL}}\big(\LL(\overline{\ZZ}_{{T}})\big|\big|\LL(\ZZ^*_{{T}})) < \infty$ by Lemma \ref{lemma:KL-subseq} we obtain:
$$D_{\mathrm{KL}}\big(\LL(\overline{\ZZ}_{{T}})\big|\big|\LL(\ZZ^*_{{T}})) \le \sup_{r\ge 0} D_{\mathrm{KL}}\big(\LL(\ZZ^{r}_{{T}})\big|\big|\LL(\ZZ^*_{{T}})) \le {\sigma_{0}}^{-2} \int_0^{\mathrm{T}} \E{\big{\|}}\g^*_{{s}}{\big{\|}}^2\dd s < \infty$$
Finally, using property $\LL(\ZZ_{{T}}) = \LL(\overline{\ZZ}_{{T}})$ we obtain the desired:
$$D_{\mathrm{KL}}\big(\LL(\ZZ_{{T}})\big|\big|\LL(\ZZ^*_{{T}})) \le {\sigma_{0}}^{-2} \int_0^{\mathrm{T}} \E{\big{\|}}\g^*_{{s}}{\big{\|}}^2\dd s$$
\end{proof}

\vspace{-0.2cm}
\subsection{Proof of Lemma \ref{th_exp_intg}}
\vspace{-0.2cm}

\begin{proof} 
Recall 
$$\mathcal{C}_{\mathcal{W}}^{2}\big(\mathcal{L}(Z_{t})\big) = \frac{1}{4}\min_{a>0, z\in\mathbb{R}^{d}}\log \mathbb{E}^{\frac{1}{a}} \exp\Big(\frac{3}{2} + a{{\big\|}}\ZZ_{{t}}-z{{\big\|}}^{2 }\Big)$$
To prove that bound we consider the following ODE with $z(0) = x_{0}$:
$$\frac{\mathbf{d}z(t)}{\mathbf{d}t} = b(z(t))$$
Using it as $z \leftarrow z(t)$ and $a \leftarrow a(t) = \frac{M_{0}\varepsilon}{4d\sigma_{1}^2 \eta^{\gamma}}e^{-2mt}$ for $m \ge M_{0}(1 + \varepsilon)$ we consider process $\s_{{t}} = \varphi(t, \ZZ_{t})$ for $\varphi(t, z) = e^{a(t) \|z - z(t)\|^2}$. Note that $$\partial_{t} \varphi = a(t)\big(-2\langle \f(z_{t}), z - z(t)\rangle - 2m \|z - z(t)\|^2\big)\varphi(t, z).$$ 
By applying  Itô lemma (see Theorem 8.1, p. 220 and Remark 9.1 on p. 257 in \cite{baldi2017stochastic}) to $S_t = \varphi(t, Z_t)$ and taking expectation, we obtain the equation:
$$\mathrm{d} \mathbb{E}\s_{{t}} = \mathbb{E}\Big(\partial_{t}\varphi(t, \ZZ_{t}) + \mathcal{A}\varphi(t, \ZZ_{t})\Big)\mathrm{d} t,$$
where we used the fact that $\mathbb{E}\big(\mathrm{d}W_{t}\big|W_t\big) = 0$ to eliminate all terms with $\mathrm{d}W_{t}$ (see Remark 9.2 on p. 264 in \cite{baldi2017stochastic}). Substituting there definition of $\varphi$ we obtain:
$$\mathrm{d}\mathbb{E}\varphi(t, \ZZ_t)= \mathbb{E}\Big(a(t)\big(-2\langle \f(z_{t}), \ZZ_t - z(t)\rangle - 2m \|z - z(t)\|^2\big)\varphi(t, z) + 2a(t)\langle \f(z_{t}), \ZZ_t - z(t)\rangle\varphi(t, z) + $$
$$+\eta^{\gamma}\mathrm{Tr}\Big(\sigma(\ZZ_t)\big(2a(t)I_{d} + 4a^2(t)(\ZZ_t-z_t)(\ZZ_t-z_t)^T \big)\sigma^T(\ZZ_t)\Big)\varphi(t, \ZZ_t)\Big)\mathrm{d}t$$
Simplifying yields:
$$\mathrm{d}\mathbb{E}\varphi(t, \ZZ_t)= \mathbb{E}\Big(a(t)\big(- 2m \|z - z(t)\|^2\big)\varphi(t, z) + $$
$$+\eta^{\gamma}\mathrm{Tr}\Big(\sigma(\ZZ_t)\big(2a(t)I_{d} + 4a^2(t)(\ZZ_t-z_t)(\ZZ_t-z_t)^T \big)\sigma^T(\ZZ_t)\Big)\varphi(t, \ZZ_t)\Big)\mathrm{d}t$$
Since $\sigma(x) \le \sigma_1 I_d$ we can bound the last term as $4\eta^{\gamma}a^2(t) d\sigma_1^2\|\ZZ_t-z_t\|^2$. Thus, by choosing $m \ge M_{0}(1+\varepsilon)$ we can ensure that $2a(t)m \ge 4\eta^{\gamma}a^2(t) d\sigma_1^2\|\ZZ_t-z_t\|^2$ and, therefore, to upper bound all quadratic terms by zero, leaving contribution only from term $2a(t)\eta^{\gamma}\mathrm{Tr}\Big(\sigma(\ZZ_t)\sigma^T(\ZZ_t)\Big)\varphi(t, \ZZ_t) \le 2a(t)\eta^{\gamma} d \sigma_{1}^2\varphi(t, \ZZ_t)$. Which in turn allows us to reduce the estimate of $w(t) = \mathbb{E}e^{a(t)\|Z_{t} - z(t)\|^2}$ to deterministic Bihari-LaSalle inequality \ref{BLI} $$w(t) \le \int_{0}^{t}w(t)\mathbf{d}A(t) + 1, $$
where we denoted as $A(t) = \big(\frac{\sigma_{1}^2 M_{0}\varepsilon}{m}(1-e^{-2m t})\big), A(0) = 0, A(t)\nearrow \frac{\sigma_{1}^2 M_{0} \varepsilon}{m}$ as $t\rightarrow +\infty$. Using it gives us bound $$w(t) \le \exp\Big(\big(\frac{\sigma_{1}^2 M_{0}\varepsilon}{m}(1-e^{-2m t})\big)\Big) \le \exp\big(\frac{\sigma_{1}^2 M_{0}\varepsilon}{m}\big).$$  
Finally, we obtain $\mathcal{C}_{\mathcal{W}}^2\big(\mathcal{L}(Z_{t})\big) \le \eta^{\gamma}\frac{d \sigma_{1}^2}{M_{0}}e^{M_{0}(1 + \varepsilon) t}\big(\frac{3}{2\varepsilon} + \frac{1}{1 + \varepsilon}\big)$ by noting that $\mathcal{C}_{\mathcal{W}}^2\big(\mathcal{L}(Z_{t})\big) \le \log \big(e^{\frac{3}{2}} w(t)\big)^{\frac{1}{a(t)}} = \frac{\frac{3}{2} + \log w(t)}{a(t)}$ and substituting formulas for $w(t)$ and $a(t)$, which concludes the proof. 

% where $\widehat{M}^2 = M_{0}^2 + \mathrm{Tr\,}\Big(\sigma\sigma^{\mathrm{T}}\big(z_{0}\big)\Big)$
% $$\mathcal{A}\varphi(t,Z_{t}) =  \partial_{t} \varphi(t, Z_{t}) + \langle\f(\ZZ_{t}), \nabla \varphi(t, Z_{t})\rangle + \eta^{\gamma}\mathrm{Tr\,}\Big(\sigma\sigma^{\mathrm{T}}(\ZZ_{{t}})\nabla^2\varphi(t, Z_{t})\Big)$$
% Which immediately suggests to take $M' = 2 M_{0} + 2 \frac{\sqrt{a}}{\sqrt{A}}\big|b(z_{0})\big| + 6 \eta^{\gamma} a \mathrm{Tr\,}\big(\sigma\sigma^{\mathrm{T}}\big(z_{0}\big)\big) + 6\eta^{\gamma} a M_{0}^2$. 
\end{proof}
\begin{remark}\label{remark:C_W}
    Constant $\varepsilon > 0$ can be chosen arbitrarily. smaller values lead to tighter asymptotic behavior, while larger values lead to tighter constants on finite horizon $T$. For simplicity, we choose $\varepsilon = 1$. Therefore, we have that
    $$\mathcal{C}_{\mathcal{W}}\big(\mathcal{L}(Z_{t})\big) \le \eta^{\frac{\gamma}{2}}\sigma_{1}e^{M_{0}t}\Big(\frac{2d}{M_{0}}\Big)^{\frac{1}{2}}$$
\end{remark}
\begin{remark}
    This result is aligned up to constant multipliers with the result of Remark 10.4 on p. 319 in \cite{baldi2017stochastic}. However, there is no explicit expression for some constants or proof of it in \cite{baldi2017stochastic}. Nevertheless, they claim that $a=a(t)$ can be selected arbitrarily as long as $a < \frac{e^{-2M_{0}T}}{2T\sigma_{1}^2 \eta^{\gamma}}$. This is slightly better asymottically for $T\rightarrow \infty$ as $\frac{1}{T} \ll e^{\varepsilon T}$ (as in our result) for arbitrarily small $\varepsilon > 0$. What's important here is that the order of growth $e^{\mathcal{O}(T)}$ is the same, and the scaling factor $\mathcal{O}(\frac{1}{\eta^{\gamma}})$ which is extremely important for the main result of our work, without which we won't be able to cover the case of $\gamma = 1$. While it may be interesting to derive the sharpest bound regarding exponential growth, we assume that horizon $T$ is fixed in our work. Therefore, the bound and order of the bound in Lemma \ref{th_exp_intg} are sharp up to constant multipliers. 

    Moreover, in \cite{baldi2017stochastic}, this similar result is claimed to be true under the assumption of uniform ellipticity and uniformly bounded diffusion coefficient (i.e., $0 < \sigma_{0} \le \sigma(x) \le \sigma_{1} < \infty \forall x$) which is aligned with the Assumption \ref{as:key}. 
\end{remark}

\vspace{-0.2cm}
\subsection{Proof of Theorem \ref{th:main}}\label{app:theorem-3}
\vspace{-0.2cm}

To prove the Theorem, we first will apply Theorem \ref{th:girsanov} to obtain the bound for the very last step in upper bounding, i.e., between $Z^{Y}_{t}$ and $Z_{t}$. Since that theorem gives bound on $D_{\mathrm{KL}}$ we are going to use transportation bound (Eq. \ref{eq:con_w2_kl}) to obtain the bound on $\mathcal{W}_2$. After that, we have bounds between all subsequent pairs of processes that we have built: $X_{k'}, X_{Sk}, Y^{X}_{\overline{\eta} k}, Y_{\eta k'}, Z^{Y}_{\eta k'}, Z_{k'\eta}$. Thus, triangle inequality allows us to upper bound $\mathcal{W}_{2}\big(\mathcal{L}(X_{k'}), \mathcal{L}(Z_{k'\eta})\big)$ by the sum of bounds between each subsequent process.
\begin{corollary} \label{cor:girsanov}
We have that for time horizon $t = \eta k \ge 0$ the following bound holds:
$$D_{\mathrm{KL}}\big(\LL(\ZZ^{Y}_{{k\eta}})\big|\big|\LL(\ZZ_{{k\eta}})) \le \eta^{-\gamma}C_{3} k\eta R^2 (k\eta) \left( {\eta}^{2\alpha}+\frac{{\eta}^{\gamma+\beta}}{S}(1-\chi_{0}) + \overline{{\eta}}\right),$$
where $C_{3} \le \frac{2(L^2 C'' + L_1^2 C_{2})}{\sigma_{0}^2}$ (see Lemma \ref{lemma:A2} for the definition of $C''$, Lemma \ref{lemma:A3} for the definitions of $L$ and $C_{2}$, and Lemma \ref{lemma:A4-A3-app} for the definition of $L$).
\end{corollary}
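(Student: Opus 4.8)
The plan is to recognize $\ZZ^{Y}_{t}$ as the target diffusion $\ZZ_{t}$ perturbed by an adapted (non-Markovian) drift, apply the one-time Girsanov formula of Theorem~\ref{th:girsanov}, and then feed in the second-moment estimates for the intermediate couplings already proved in Lemmas~\ref{lemma:A2}--\ref{lemma:A4-A3-app}. Concretely, recall from Section~\ref{sec:ca_wc} and the subsequent subsections that $\ZZ^{Y}_{t}$ solves $\dd\ZZ^{Y}_{t} = \big(b(\ZZ^{Y}_{t}) + G^{S}_{t}\big)\,\dd t + \sqrt{\eta^{\gamma}}\,\sigma(\ZZ^{Y}_{t})\,\dd \WW_{t}$ with $G^{S}_{t} = \g^{S}_{t} - L_{1}\big(\ZZ^{Y}_{t} - Y_{t}\big)$, and this shares the diffusion coefficient $\sqrt{\eta^{\gamma}}\sigma(\cdot)$ with $\ZZ_{t}$. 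By Assumption~\ref{as:key} this coefficient is symmetric, Lipschitz, and uniformly elliptic with smallest eigenvalue at least $\sqrt{\eta^{\gamma}}\,\sigma_{0} > 0$. Since $\g^{S}_{t} = L\big(\XX_{S[t/\overline{\eta}]} - \DD_{\overline{\eta}[t/\overline{\eta}]}\big)$ depends only on the history while $Y_{t}$ and $\ZZ^{Y}_{t}$ are driven by $\WW$, the process $(G^{S}_{t})$ is adapted, and Lemma~\ref{lemma:R_bound} together with the estimates below makes $\int_{0}^{k\eta}\E\|G^{S}_{t}\|^{2}\,\dd t$ finite. Applying Theorem~\ref{th:girsanov} with $\sigma(\cdot)\leftarrow\sqrt{\eta^{\gamma}}\sigma(\cdot)$ and $G_{t}\leftarrow G^{S}_{t}$ then gives
$$D_{\mathrm{KL}}\big(\LL(\ZZ^{Y}_{k\eta})\,\|\,\LL(\ZZ_{k\eta})\big) \le \frac{1}{\eta^{\gamma}\sigma_{0}^{2}}\int_{0}^{k\eta}\E\big\|G^{S}_{t}\big\|^{2}\,\dd t,$$
and this rescaling is the sole source of the factor $\eta^{-\gamma}$ in the claim.

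Next I would bound the integrand uniformly in $t\le k\eta$. Split $\E\|G^{S}_{t}\|^{2} \le 2\,\E\|\g^{S}_{t}\|^{2} + 2L_{1}^{2}\,\E\|\ZZ^{Y}_{t} - Y_{t}\|^{2}$. The first term equals $L^{2}\,\E\big\|\XX_{S[t/\overline{\eta}]} - \DD_{\overline{\eta}[t/\overline{\eta}]}\big\|^{2}$; applying Lemma~\ref{lemma:A2} with horizon $k\eta$ bounds it by $L^{2}C'' R^{2}(k\eta)\big(\eta^{2\alpha} + \tfrac{\eta^{\gamma+\beta}}{S}(1-\chi_{0}) + \overline{\eta}\big)$. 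For the second term, Lemma~\ref{lemma:A4-A3-app} gives $\E\|\ZZ^{Y}_{t} - Y_{t}\|^{2} \le \sup_{t'\le t}\E\big\|\DD_{[t'/\overline{\eta}]\overline{\eta}} - Y_{t'}\big\|^{2}$, which by Lemma~\ref{lemma:A3} is at most $C_{2}\overline{\eta}^{1+\gamma}R^{2}(k\eta)\le C_{2}\overline{\eta}\,R^{2}(k\eta)$ since $\overline{\eta}\le 1$ and $\gamma\ge 0$, so it is absorbed into the $\overline{\eta}$ term. Adding these and integrating the resulting $t$-independent bound over $[0,k\eta]$ produces the factor $k\eta$ and yields exactly
$$D_{\mathrm{KL}}\big(\LL(\ZZ^{Y}_{k\eta})\,\|\,\LL(\ZZ_{k\eta})\big) \le \frac{2(L^{2}C'' + L_{1}^{2}C_{2})}{\eta^{\gamma}\sigma_{0}^{2}}\,k\eta\,R^{2}(k\eta)\Big(\eta^{2\alpha} + \tfrac{\eta^{\gamma+\beta}}{S}(1-\chi_{0}) + \overline{\eta}\Big),$$
i.e.\ the claim with $C_{3} = 2(L^{2}C'' + L_{1}^{2}C_{2})/\sigma_{0}^{2}$.

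The main obstacle is not the arithmetic but justifying that Theorem~\ref{th:girsanov} genuinely applies: one must check that $(\WW_{t})$ is a Brownian motion relative to a filtration large enough that $G^{S}_{t}$ — which involves the frozen values $\XX_{S[t/\overline{\eta}]}$ and $\DD_{\overline{\eta}[t/\overline{\eta}]}$ produced through the $\epsilon$/$\zeta$ optimal coupling of Section~\ref{sec:ca_wc} — is adapted, so that the "mixed Ito/adapted'' hypothesis is met and the integrability of $G^{S}$ is precisely what the theorem requires. One also has to confirm that replacing $\sigma$ by $\sqrt{\eta^{\gamma}}\sigma$ transports the uniform-ellipticity and Lipschitz hypotheses of the theorem, with ellipticity constant $\sqrt{\eta^{\gamma}}\sigma_{0}$ in place of $\sigma_{0}$. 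Everything else is a direct reuse of the constants $L$, $L_{1}$, $C''$, $C_{2}$ already pinned down in Lemmas~\ref{lemma:A2}--\ref{lemma:A4-A3-app}.
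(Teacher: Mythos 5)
Your proposal is correct and follows essentially the same route as the paper's proof: apply Theorem~\ref{th:girsanov} to the pair $(\ZZ^{Y}_{t},\ZZ_{t})$ with diffusion coefficient $\sqrt{\eta^{\gamma}}\sigma(\cdot)$ (whence the $\eta^{-\gamma}$ from the ellipticity constant $\sqrt{\eta^{\gamma}}\sigma_{0}$), split $\E\|G^{S}_{t}\|^{2}\le 2L^{2}\E\|\XX_{S[t/\overline{\eta}]}-\DD_{\overline{\eta}[t/\overline{\eta}]}\|^{2}+2L_{1}^{2}\E\|\ZZ^{Y}_{t}-Y_{t}\|^{2}$, and invoke Lemmas~\ref{lemma:A2}, \ref{lemma:A4-A3-app} and \ref{lemma:A3}, absorbing the $\overline{\eta}^{1+\gamma}$ term into $\overline{\eta}$ exactly as the paper does. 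The constants also match.
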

\begin{proof} Recall that $Z^{Y}_{t}$ has the following SDE:
$$\mathrm{d}Z^{Y}_t = \big(b(Z^{Y}_t) + G^{S}_t\big)\mathrm{d}t + \sqrt{\eta^{\gamma}}\sigma(Z^{Y}_t)\mathrm{d}W_{t},$$
where $G^S_{{t}} = L(\XX_{[t/\overline{\eta}]S}-\DD_{[t/\overline{\eta}]\overline{\eta}}) - L_{1}(\ZZ^{Y}_{{t}} - Y_{{t}})$. By applying Theorem \ref{th:girsanov} to $Z^{Y}_t$ and $Z_{t}$, we obtain the following:
$$D_{\mathrm{KL}}\big(\LL(\ZZ^{Y}_{{k\eta}})\big|\big|\LL(\ZZ_{{k\eta}})) \le \frac{2}{\eta^{\gamma}{\sigma_{0}}^{2}} \int_{0}^{k\eta}\mathbb{E}\big\|G^{S}_t\big\|^2\dd t $$
$$\le \frac{2}{\eta^{\gamma}{\sigma_{0}}^{2}} \int_{0}^{k\eta}\mathbb{E}\big\|L(\XX_{[t/\overline{\eta}]S}-\DD_{[t/\overline{\eta}]\overline{\eta}}) - L_{1}(\ZZ^{Y}_{{t}} - Y_{{t}})\big\|^2\dd t $$
$$\le \frac{2}{\eta^{\gamma}{\sigma_{0}}^{2}} \int_{0}^{k\eta}\Big(L^2\mathbb{E} \big\|\DD_{[t/\overline{\eta}]\overline{\eta}}-\XX_{[t/\overline{\eta}]S}\big\|^2 + L_{1}^2 \mathbb{E}\big\|\ZZ^{Y}_{{t}} - Y_{{t}}\big\|^2\Big)\dd t $$
By using Lemma \ref{lemma:A4-A3} we obtain the bound:
\begin{align*}
    D_{\mathrm{KL}}&\big(\LL(\ZZ^{Y}_{{k\eta}})\big|\big|\LL(\ZZ_{{k\eta}})) \le
    \\
    &\le \frac{2}{\eta^{\gamma}{\sigma_{0}}^{2}} \int_{0}^{k\eta}\Big(L^2\mathbb{E} \big\|\DD_{[t/\overline{\eta}]\overline{\eta}}-\XX_{[t/\overline{\eta}]S}\big\|^2 + L_{1}^2\mathrm{sup}_{s \le k\eta} \mathbb{E}\Big\|\DD_{[s/\overline{\eta}]\overline{\eta}}-Y_{{s}}\Big\|^2\Big)\dd t
    \\
    &\le\frac{2k\eta}{\eta^{\gamma}{\sigma_{0}}^{2}} \Big(L^2\mathrm{sup}_{t\le k\eta}\mathbb{E} \big\|\DD_{[t/\overline{\eta}]\overline{\eta}}-\XX_{[t/\overline{\eta}]S}\big\|^2 + L_{1}^2\mathrm{sup}_{t \le k\eta} \mathbb{E}\Big\|\DD_{[s/\overline{\eta}]\overline{\eta}}-Y_{{s}}\Big\|^2\Big)
\end{align*}
Next, by using Lemmas \ref{lemma:A2_main} and \ref{lemma:A3_main} we bound both terms to obtain:
$$D_{\mathrm{KL}}\big(\LL(\ZZ^{Y}_{{k\eta}})\big|\big|\LL(\ZZ_{{k\eta}})) \le\frac{2k\eta}{\eta^{\gamma}{\sigma_{0}}^{2}} \Big(L^2 R^2(k\eta) C'' \big({\eta}^{2\alpha}+\frac{{\eta}^{\gamma+\beta}}{S}(1-\chi_{0}) + \overline{{\eta}} \big) + L_{1}^2 R^2(k\eta) C_{2} \overline{\eta}^{1+\gamma}\Big)$$
Finally, by noting that ${\eta}^{2\alpha}+\frac{{\eta}^{\gamma+\beta}}{S}(1-\chi_{0}) + \overline{{\eta}} \ge \overline{\eta}^{1+\gamma}$ we obtain the desired result.
\end{proof}
\begin{proof} [Proof of Theorem \ref{th:main}]
Let's introduce constant $\delta = \eta^{2\alpha} + \frac{\eta^{\gamma+\beta}}{S} (1 - \chi_{0}) + S\eta$ (appears first in Lemma \ref{lemma:A2}; $\chi_0 = 1$ iff $\epsilon_k$ is  Gaussian (see Assumption \ref{as:key})). We want to produce a bound on $\mathcal{W}_{2}(\mathcal{L}(\XX_{k'}), \mathcal{L}(\ZZ_{\eta k'}))$. To do so we consider $k' = Sk + i, i < S$ and rewrite it as:
\begin{equation*}
    \mathcal{W}_{2}(\mathcal{L}(\XX_{k'}), \mathcal{L}(\ZZ_{k'\eta})) \le \mathcal{W}_{2}(\mathcal{L}(\XX_{k'}), \mathcal{L}(\XX_{Sk})) + \mathcal{W}_{2}(\mathcal{L}(\XX_{Sk}), \mathcal{L}(\ZZ_{\eta k'}))
\end{equation*}
We note that $L_{2}$ norm between random variables upper bounds Wasserstein-2 distance between their distributions by definition of the metric. Therefore, the first term is bounded by Lemma \ref{lem:8} as
$$\mathcal{W}_{2}(\mathcal{L}(\XX_{k'}), \mathcal{L}(\XX_{Sk}))\le {C'}^{\frac{1}{2}} S\eta \le {C'}^{\frac{1}{2}}\delta^{\frac{1}{2}}.$$ 
To bound the second one, we consider the following trick:
\begin{align*}
    \mathcal{W}_{2}(\mathcal{L}(\XX_{Sk}), \mathcal{L}(\ZZ_{\eta k'}))& \\
    \le & \mathcal{W}_{2}(\mathcal{L}(\XX_{Sk}), \mathcal{L}(\DD_{\overline{\eta} k})) + \mathcal{W}_{2}(\mathcal{L}(\DD_{\overline{\eta}k}), \mathcal{L}(Y_{\eta k'})) 
    \\
    &+ \mathcal{W}_{2}(\mathcal{L}(Y_{\eta k'}), \mathcal{L}(\ZZ^{Y}_{\eta k'})) + \mathcal{W}_{2}(\mathcal{L}(\ZZ^{Y}_{\eta k'}), \mathcal{L}(\ZZ_{\eta k'}))
\end{align*}
The first term is bounded by Lemma \ref{lemma:A2}:
\begin{align*}\mathcal{W}_{2}&(\mathcal{L}(\XX_{Sk}), \mathcal{L}(\DD_{\overline{\eta} k}))\le C''R(k'\eta)\delta^{\frac{1}{2}},
%&\le \big(7^{\frac{1}{2}}M_{1} \eta^{\alpha} + 51^{\frac{1}{2}}M M_{0} e^{\frac{1}{2}C+2}(S\eta)^{\frac{1}{2}} + 2d^{\frac{1}{2}} \sigma_{1} M_{\epsilon} \frac{\eta^{\frac{\beta+\gamma}{2}}}{S^{\frac{1}{2}}}\big)e^{\frac{1}{2}C(k'\eta + 1)}\sqrt{1+\|x_0\|^2}\\
%&\le 
\end{align*}
where $C''$ is defined in Lemma \ref{lemma:A2}). The second one can be bounded by Lemma \ref{lemma:A3} as
\begin{align*}\mathcal{W}_{2}&(\mathcal{L}(\DD_{\overline{\eta}k}), \mathcal{L}(Y_{\eta k'}))\le C_{2} (S\eta)^{\frac{1+\gamma}{2}} R(k'\eta) \le  C_{2} R(k'\eta)\delta^{\frac{1}{2}} .
\end{align*}
The third one by Lemma \ref{lemma:A4-A3-app} and then by Lemma \ref{lemma:A2} by the same upper bound exactly as the second one. To bound the last one we use entropy bound on $\mathcal{W}_{2}$ (\eqref{eq:con_w2_kl}):
\begin{equation*}
    \mathcal{W}_2(\mathcal{L}(\ZZ^{Y}_{\eta k'}), \mathcal{L}(\ZZ_{\eta k'})) \le \mathcal{C}_{\mathcal{W}}\big(\mathcal{L}(Z_{t})\big) \left( D_{\mathrm{KL}}^{\frac{1}{2}}\big(\mathcal{L}(\ZZ^{Y}_{\eta k'})\big|\big| \mathcal{L}(\ZZ_{\eta k'})\big) +  D_{\mathrm{KL}}^{\frac{1}{4}}\big(\mathcal{L}(\ZZ^{Y}_{\eta k'})\big|\big| \mathcal{L}(\ZZ_{\eta k'})\big)\right),
\end{equation*}
and by using Corollary \ref{cor:girsanov} with Lemma \ref{th_exp_intg} and Remark \ref{remark:C_W} we bound it as:
\begin{align*}
\mathcal{W}_2&(\mathcal{L}(\ZZ^{Y}_{\eta k'}), \mathcal{L}(\ZZ_{\eta k'}))\\
&\le \eta^{\frac{\gamma}{2}}\Big(\frac{2d}{M_{0}}\Big)^{\frac{1}{2}}\sigma_{1}e^{M_{0}k'\eta}\Big(C_{3}^{\frac{1}{2}}\eta^{-\frac{\gamma}{2}}R(k'\eta)(k'\eta)^{\frac{1}{2}}\delta^{\frac{1}{2}} + 2^{-\frac{1}{4}}C_{3}^{\frac{1}{4}}\eta^{-\frac{\gamma}{4}}R^{\frac{1}{2}}(k'\eta)(k'\eta)^{\frac{1}{4}}\delta^{\frac{1}{4}}\Big)\\
&\le \Big(\frac{2d}{M_{0}}\Big)^{\frac{1}{2}}\sigma_{1}e^{M_{0}k'\eta}\Big(C_{3}^{\frac{1}{2}}R(k'\eta)(k'\eta)^{\frac{1}{2}}\delta^{\frac{1}{2}} + 2^{-\frac{1}{4}}C_{3}^{\frac{1}{4}}R^{\frac{1}{2}}(k'\eta)(k'\eta)^{\frac{1}{4}}\eta^{\frac{\gamma}{4}}\delta^{\frac{1}{4}}\Big)
\end{align*}
Finally, we obtain the bound between $\mathcal{W}_{2}(\mathcal{L}(\XX_{k'}), \mathcal{L}(\ZZ_{k'\eta}))$ by summing up those bounds and simplifying:
\begin{align*}
    \mathcal{W}_{2}(\mathcal{L}(\XX_{k'})&, \mathcal{L}(\ZZ_{k'\eta})) 
    \\
    \le& \Big(\big(C_{4} (k'\eta)^{\frac{1}{2}} e^{M_{0}k'\eta} + C_{5}\big)R(k\eta)+{C'}^{\frac{1}{2}}\Big)\delta^{\frac{1}{2}} + C_{6} (k'\eta)^{\frac{1}{2}}e^{M_{0}k'\eta}R^{\frac{1}{2}}(k\eta)\eta^{\frac{\gamma}{4}}\delta^{\frac{1}{4}},
\end{align*}
where $C_{4} = \Big(\frac{2d}{M_{0}}\Big)^{\frac{1}{2}}\sigma_{1}C_{3}^{\frac{1}{2}}$, $C_{5} = C'' + 2 C_{2}$, $C_{6} = \Big(\frac{2^{\frac{1}{2}}d}{M_{0}}\Big)^{\frac{1}{2}}\sigma_{1}C_{3}^{\frac{1}{4}}$ and $R(k'\eta) \le e^{4(M+1)k'\eta} \sqrt{1+\|x_{0}\|^2}$ by Lemma \ref{lemma:R_bound}.

Recall that $\delta = \eta^{2\alpha} + \frac{\eta^{\gamma+\beta}}{S} (1 - \chi_{0}) + S\eta$, thus, to eliminate $S$ we set $S =  \eta^{-\frac{1-\beta}{2}}(1 - \chi_0) + \chi_0$. Observe that $S\eta =  \eta^{\frac{1+\beta}{2}}(1 - \chi_0) + \chi_0 \eta \leq 1$. Moreover, by defining $\theta = \min \left\{\alpha ; \frac{(\gamma+1)(1+\chi_{0})+(\gamma+\beta)(1-\chi_{0})}{4} \right\}$, from Corollary \ref{cor1} we have that $\delta \le 3\eta^{2\theta}$. Substituting and re-arranging the constants yields the final expression of the form:
$$\mathcal{W}_{2}\big(\mathcal{L}(\XX_{k'}),\LL(\ZZ_{{k'\eta}})) = \mathcal{O}\Big(\big(1+(k'\eta)^{\frac{1}{2}}\big)e^{\mathcal{O}(k'\eta)}{\eta}^{{\theta}} +(k'\eta)^{\frac{1}{4}}e^{\mathcal{O}(k'\eta)}\eta^{\frac{{\theta}}{2} + \frac{\gamma}{4}}\Big),$$
where constants depend only on ones defined in Assumption \ref{as:key}.

\end{proof}

\end{document}